\documentclass[11pt,letterpaper]{amsart}

\usepackage[english]{babel}

\usepackage[T1]{fontenc}
\usepackage{lmodern}

\usepackage{amsmath,amsthm,amssymb,amsfonts}
\usepackage{mathtools}
\usepackage{tikz-cd}
\usepackage{mathrsfs}
\usepackage{thmtools}
\usepackage{slashed}
\usepackage{mathabx}

\usepackage{bbm}
\usepackage{microtype}
\usepackage{accents}

\usepackage{enumitem}
\usepackage[toc,page]{appendix}
\usepackage{imakeidx}



\usepackage[backend=biber,style=alphabetic,sorting=ynt,sortcites]{biblatex}
\addbibresource{GlobalLibrary.bib}

\allowdisplaybreaks[1]

\theoremstyle{plain}
\newtheorem{theorem}{Theorem}[section]
\newtheorem*{theorem*}{Theorem}
\newtheorem{thmx}{Theorem}

\newtheorem{lem}[theorem]{Lemma}
\newtheorem{prop}[theorem]{Proposition}
\theoremstyle{definition}
\newtheorem{ex}[theorem]{Example}

\newtheorem{dfn}[theorem]{Definition}
\newtheorem{rem}[theorem]{Remark}

\theoremstyle{remark}

\newcommand{\RNum}[1]{\uppercase\expandafter{\romannumeral #1\relax}}

\makeatletter
\providecommand*{\twoheadrightarrowfill@}{%
  \arrowfill@\relbar\relbar\twoheadrightarrow
}
\providecommand*{\twoheadleftarrowfill@}{%
  \arrowfill@\twoheadleftarrow\relbar\relbar
}
\providecommand*{\xtwoheadrightarrow}[2][]{%
  \ext@arrow 0579\twoheadrightarrowfill@{#1}{#2}%
}
\providecommand*{\xtwoheadleftarrow}[2][]{%
  \ext@arrow 5097\twoheadleftarrowfill@{#1}{#2}%
}
\newcommand\setItemnumber[1]{\setcounter{enum\romannumeral\@enumdepth}{\numexpr#1-1\relax}}
\makeatother

\newcommand\norm[1]{\left\lVert#1\right\rVert}

\DeclareMathOperator\ev{ev}

\DeclareMathOperator\im{Im}

\DeclareMathOperator\dom{Dom}

\newcommand{\R}{\mathbb{R}}

\newcommand{\N}{\mathbb{N}}
\newcommand{\C}{\mathbb{C}}

\newcommand{\cA}{\mathcal{A}}

\newcommand{\cF}{\mathcal{F}}
\newcommand{\cG}{\mathcal{G}}

\newcommand{\cQ}{\mathcal{Q}}

\newcommand{\cX}{\mathcal{X}}

\newcommand{\Rpt}{\mathbb{R}_+^\times}

\usetikzlibrary{decorations.markings}
\AtEveryBibitem{\clearfield{url}}
\newcommand{\gr}{\mathfrak{gr}}
\newcommand{\Gr}{\mathfrak{gr}}
\newcommand{\aF}{\mathfrak{a}\cF}

\newcommand{\Grass}{\mathrm{Grass}}

\newcommand\norml[1]{{\left\vert\kern-0.25ex\left\vert\kern-0.25ex\left\vert #1 
    \right\vert\kern-0.25ex\right\vert\kern-0.25ex\right\vert}}

\usepackage{mathbbol}
\DeclareSymbolFontAlphabet{\mathbb}{AMSb}
\DeclareSymbolFontAlphabet{\mathbbl}{bbold}

\begin{document}
\title{Tangent groupoid and tangent cones in sub-Riemannian geometry}
\author{Omar Mohsen}
\date{}
\begin{abstract}
  Let $X_1,\cdots,X_m$ be vector fields satisfying Hörmander's Lie bracket generating condition on a smooth manifold $M$. We generalise Connes's tangent groupoid, by constructing a completion of the space $M\times M\times \R_+^\times$ using the sub-Riemannian metric. We use our space to calculate all the tangent cones  of the sub-Riemannian metric in the sense of the Gromov-Hausdorff distance. This generalises a result of Bellaïche.
  \end{abstract}
\maketitle

\setcounter{tocdepth}{1}
\tableofcontents
\section*{Introduction}
Let $M$ be a smooth manifold. In \cite[Chapter 2.5]{ConnesBook}, Connes introduced the tangent groupoid $$\mathbb{T}M:=M\times M\times \R_+^\times \sqcup TM\times \{0\}.$$ 
The space $\mathbb{T}M$ is equipped with a smooth manifold structure by the deformation to the normal cone construction, see \cite[Chapter 5]{FultonBookIntersection}. 
The space $\mathbb{T}M$ is also a Lie groupoid, in particular the space $C^\infty_c(\mathbb{T}M)$ is equipped with a convolution law making it a $*$-algebra. The convolution law is a deformation of the usual convolution law on $M\times M$ which is used in Schwartz kernels of operators acting on $L^2M$ and the convolution law on $T_xM$ for $x\in M$ which comes from the commutative group structure. The tangent groupoid has been studied and generalised by many authors, see \cite{Erik1,HigsonYiSpinors,ConnesTangentStrictQuant,PaoloSchwartzAlgebraTangent,NistorPseudoContFamily,AS3,MontPierrotIndiceAna,DebordLescureKDual,HilsumSkandalisFoli,HigsonTnagentGrp,HigsonHaj,MohsenGrpTangent,ErikBobTangentGrp,ChoiPonge,LandsmanRamazanContinuousField,EwertSchwartz}.\footnote{This list is certainly not exhaustive.}

Following Gromov \cite{GromovSubRiem}, if $(X,d)$ is a metric space, then the limit (if it exists) $\lim_{t\to 0^+}(X,t^{-1}d,x)$ in the sense of pointed Gromov-Hausdorff distance is called the tangent cone of $X$ at $x$. The tangent cone of a Riemannian manifold $(M,d_M)$ at $x$ is $T_xM$. Connes tangent groupoid can be seen as an improvement of this simple fact, because the function \begin{equation}\label{eqn:dist_Connes}
\mathbb{T}M\to \R_+,\quad (y,x,t)\mapsto t^{-1}d_M(x,y),\quad  (\xi,x,0)\mapsto d_{T_xM}(\xi,0)
\end{equation}is continuous, where $d_{T_xM}$ is the constant distance associated to the Riemannian metric. 

In this article, we present a generalization of Connes's tangent groupoid in sub-Riemannian geometry. This construction is a special case of the author's construction \cite{MohsenBlowup}, and has been used in \cite{MohsenMaxHypo,MohsenIndex}. The construction given in \cite{MohsenBlowup} is based on ideas and techniques due to Androulidakis and Skandalis \cite{AS1,AS2} and Debord \cite{DebordFoliation2001,Debord2013}. The goal here is to give a self-contained presentation of our construction which does not rely on singular foliations. We then use our construction to calculate the tangent cones in sub-Riemannian geometry.

So let $X_1,\cdots,X_d$ be vector fields which satisfy Hörmander's Lie bracket generating condition of depth $N\in \N$, i.e., for every $x\in M$, the tangent space $T_xM$ is spanned by $X_1(x),\cdots,X_d(x)$ together with their iterated brackets $[X_i,X_j](x),[[X_i,X_j],X_k](x),\cdots$ of length $\leq N$. A theorem of Chow \cite{ChowTheorem} implies that any two points of $M$ can be connected by a piecewise smooth path $\gamma$ such that $\gamma'(t)\in \mathrm{span}(X_1(\gamma(t)),\cdots,X_d(\gamma(t)))$ for almost all $t\in [0,1]$. One then defines the Carnot-Carathéodory metric $d_{CC}(x,y)$ to be the minimum length of all such paths, see \cite{BellaicheArt}.

Generalizing a result of Mitchell \cite{MitchellCarnot}, Bellaïche \cite{BellaicheArt} computed the tangent cone of $(M,d_{CC})$ as follows. Let $G$ be the free $N$-step nilpotent Lie group with $d$ generators, one for each $X_1,\cdots,X_d$. For each $x\in M$, 
Bellaïche identifies a simply connected Lie subgroup $\mathfrak{r}_x\subseteq G$ of codimension $\dim(M)$. He shows that \begin{equation}\label{eqn:limit_bellaiche}
\lim_{t\to 0^+}(M,t^{-1}d_{CC},x)=(G/\mathfrak{r}_x,d_{G/\mathfrak{r}_x},\mathfrak{r}_x)
\end{equation}
where $d_{G/\mathfrak{r}_x}$ is a Carnot-Carathéodory metric on the homogeneous space $G/\mathfrak{r}_x$.

A naive guess of the analogue of Connes's tangent groupoid is $M\times M\times \R_+^\times\bigsqcup_{x\in M}G/\mathfrak{r}_x\times \{0\}$. There are two problems with this definition. First it is not a groupoid because $\mathfrak{r}_x$ in general is not normal in $G$. 
Second it is not locally compact (with respect to some natural topology one can define). These issues are ultimately due to the fact that the limit \eqref{eqn:limit_bellaiche} is not uniform in $x$. That is, if $x_n\in M$ is a sequence converging to $x$ and $t_n\to 0^+$, then $\lim_{n\to +\infty}(M,t_n^{-1}d_{CC},x_n)$ exists (up to taking a subsequence) but the limit is of the form $(G/H,d_{G/H},H)$ where $H\subseteq G$ is a simply connected Lie group and in general $H\neq \mathfrak{r}_y$ for all $y\in M$.

This leads to our construction. For each $x\in M$, we define a subset $\cG^0_x$ of simply connected Lie subgroups of $G$ of codimension $\dim(M)$, which satisfies the following conditions \begin{enumerate}
\item $\mathfrak{r}_x\in \cG^0_x$.
\item If $g\in G$, $H\in \cG^0_x$, then $gHg^{-1}\in \cG^0_x$.
\item The set $\cG^0_x$ is closed when identified (using Lie algebras) with a subset of the Grassmannian manifold of subspaces of $\mathfrak{g}$ of codimension $\dim(M)$. 
\end{enumerate}

The set $\cG^0_x$ comes from looking at linear relations between the vector fields $X_1,\cdots,X_d$ and their commutators which we now define.
Let $\mathfrak{g}$ be the Lie algebra of $G$. There is a natural linear map $$\natural:\mathfrak{g}\to \cX(M),$$ where $\cX(M)$ is the space of vector fields on $M$. The map $\natural$ is the unique linear map which sends $X_i$ to $X_i$ and which sends iterated Lie brackets 
of $X_i$ in $\mathfrak{g}$ of length $\leq N$ to the corresponding Lie brackets in $\cX(M)$. Notice that $\natural$ isn't a Lie algebra homomorphism because iterated Lie brackets in $\cX(M)$ of length $> N$ do not necessarily vanish.
For $x\in M$, let $\natural_x:\mathfrak{g}\to T_xM$ be the composition of $\natural$ with the evaluation map at $x\in M$. The map $\natural_x$ is surjective by Hörmander's condition. 
Therefore, $\ker(\natural_x)$ is a subspace of $\mathfrak{g}$ of codimension equal to $\dim(M)$. Let $\mathrm{Grass}(\mathfrak{g})$ be the Grassmannian manifold of subspaces of codimension $\dim(M)$. 
We also make use of the natural graded dilation $\alpha_t:\mathfrak{g}\to \mathfrak{g}$ on $\mathfrak{g}$ which is defined by $\alpha_t(X_i)=tX_i$, $\alpha_t([X_i,X_j])=t^2[X_i,X_j]$, etc. 
The main point of our construction is that the convergence in the Gromov-Hausdorff distance of $(M,t_n^{-1}d_{CC},x_n)$ can be reduced to the linear algebra problem of computing the limit of $\alpha_{t_n^{-1}}(\ker(\natural_{x_n}))$ in the Grassmannian manifold $\mathrm{Grass}(\mathfrak{g})$.
We thus define 
    \begin{equation*}\begin{aligned}
        \cG^0_x:=\{\mathfrak{h}\in\mathrm{Grass}(\mathfrak{g}) :\exists (t_n)_{n\in \N}\subseteq \Rpt,(x_n)_{n\in \N}\subseteq M \text{ such that }\\
        t_n\to 0,x_n\to x,\alpha_{t_n^{-1}}(\ker(\natural_{x_n}))\to \mathfrak{h}\}
    \end{aligned}\end{equation*}
Even though $\natural$ is not a Lie algebra homomorphism and therefore $\ker(\natural_x)$ is not a Lie subalgebra of $\mathfrak{g}$ in general. We prove that the limit $\alpha_{t_n^{-1}}(\ker(\natural_{x_n}))$, if it exists, is always a Lie subalgebra of $\mathfrak{g}$. 
Therefore, $\cG^0_x$ can be considered as a set of Lie subgroups of $G$.

We go further by creating a space that includes all the spaces $\cG^0_x$ together as $x$ varies. To this end, we consider the inclusion 
    \begin{equation*}\begin{aligned}
        M\times \Rpt\to \mathrm{Grass}(\mathfrak{g})\times M\times \Rpt\\
        (x,t)\mapsto (\alpha_{\frac{1}{t}}(\ker(\natural_x)),x,t)
    \end{aligned}\end{equation*}
We take the space $$\mathbb{G}^0:=M\times \R_+^\times \bigsqcup_{x\in M}\cG^0_x\times \{(x,0)\}$$ to be the closure of $M\times \Rpt$ inside $\mathrm{Grass}(\mathfrak{g})\times M\times \Rpt$ equipped with the subspace topology. This topology is a second countable locally compact metrizable topology
which makes the natural projection $\mathbb{G}^0\to M\times \R$ a proper continuous map. Our main theorem is the following 
\begin{thmx}\label{thm_intro_tangentcones}If $(x_n,t_n)\in M\times \R_+^\times$ converges in $\mathbb{G}^0$ to $(H,x,0)$ where $H\in \cG^0_x$, i.e., if $t_n\to 0$, $x_n\to x$ and $\alpha_{t_n^{-1}}(\ker(\natural_{x_n}))\to \mathfrak{h}$, then $$\lim_{n\to +\infty}(M,t_n^{-1}d_{CC},x_n)=(G/H,d_{G/H},H),$$ where $d_{G/H}$ is a Carnot-Carathéodory metric on the homogeneous space $G/H$.
\end{thmx}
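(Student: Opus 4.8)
The plan is to reduce the pointed Gromov--Hausdorff convergence to the existence of $\epsilon_n$-approximations of balls: by the standard criterion, it suffices to construct, for every $R>0$, maps $f_n\colon \overline B_{d_{G/H}}(\overline e,R)\to M$ with $f_n(\overline e)=x_n$ which are $\epsilon_n$-isometries onto an $\epsilon_n$-dense subset of the ball $\overline B(x_n,R)$ in the rescaled metric $t_n^{-1}d_{CC}$, for some $\epsilon_n\to 0$. Here $H:=\exp_G\mathfrak h$, which is a simply connected Lie subgroup of $G$ precisely because the limit $\mathfrak h$ of $\alpha_{t_n^{-1}}(\ker\natural_{x_n})$ is a Lie subalgebra of $\mathfrak g$ (established earlier), and $d_{G/H}$ is the associated Carnot--Carath\'eodory metric; $G/H$ with this metric is a proper length space.

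First I would work in a fixed neighbourhood $U$ of $x$ and build a ``concatenation of flows'' map $\Phi\colon U\times\Omega\to M$, with $\Omega$ a neighbourhood of $e$ in $G$. Writing $g\in\Omega$ as a product $\exp(s_kX_{i_k})\cdots\exp(s_1X_{i_1})$ of one-parameter subgroups of $G$ with $\sum_j|s_j|$ comparable to the Carnot--Carath\'eodory norm $\|g\|_G$, the word being a fixed piecewise-smooth function of $g$, I set $\Phi(y,g):=\phi^{s_k}_{X_{i_k}}\circ\cdots\circ\phi^{s_1}_{X_{i_1}}(y)$, where $\phi^s_X$ denotes the time-$s$ flow of $X$. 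The elementary properties I need are: $\Phi(y,e)=y$; $\Phi(y,\cdot)$ is a submersion near $e$ with $d_e\Phi(y,\cdot)=\natural_y$, so its image is a neighbourhood of $y$ and the ``fibre'' $\Phi(y,\cdot)^{-1}(y)$ is a submanifold tangent at $e$ to $\ker\natural_y$; and, by transporting the horizontal word, $d_{CC}(y,\Phi(y,g))\le C\|g\|_G$, while conversely $\Phi(y,\overline B_G(e,r))\supseteq \overline B_{d_{CC}}(y,cr)$ uniformly in $y\in U$ (the uniform ball--box estimate).

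Next I would rescale. Using $\alpha_t\in\Aut(G)$, set $\Phi^t(y,g):=\Phi(y,\alpha_t g)$; by the $\alpha$-homogeneity of $d_{CC,G}$ and $\alpha_t(\overline B_G(e,R))=\overline B_G(e,tR)$, the two estimates above give $t^{-1}d_{CC}(y,\Phi^t(y,g))\le C\|g\|_G$ and $\Phi^t(y,\overline B_G(e,R))\supseteq\overline B_{t^{-1}d_{CC}}(y,cR)$, uniformly for $t\in(0,1]$, $y\in U$, and $g$ in a fixed compact set. The crux of the argument --- and the step I expect to be the main obstacle --- is the quantitative statement $(\star)$: for $g,g'$ in a fixed compact subset of $G$,
$$t^{-1}\,d_{CC}\big(\Phi^t(y,g),\,\Phi^t(y,g')\big)\;=\;d_{G/\exp(\alpha_{t^{-1}}(\ker\natural_y))}\big(\overline g,\overline{g'}\big)\;+\;\omega(y,t),\qquad \sup_{y\in U}|\omega(y,t)|\xrightarrow[t\to 0]{}0.$$
Because $\natural$ is not a Lie algebra homomorphism, $\ker\natural_y$ is not a subalgebra and $\Phi(y,\cdot)$ is not a local $G$-action; its defect is built from iterated brackets of the $X_i$ of length $>N$. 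The content of $(\star)$ is that, after the substitution $g\mapsto\alpha_t g$ and division by $t$, these defects carry strictly positive homogeneity weight and hence vanish uniformly as $t\to 0$, so that in the rescaled picture $\Phi^t(y,\cdot)$ becomes, up to $\omega(y,t)$, the quotient of the left-invariant Carnot group $(G,d_{CC,G})$ by the subspace $\alpha_{t^{-1}}(\ker\natural_y)$ --- this is exactly the uniform nilpotent-approximation phenomenon, and making it uniform in $y$ is where the compactness of the Grassmannian and the stability of $\cG^0_x$ are needed. One must be careful that $\alpha_{t^{-1}}(\ker\natural_y)$ is in general still not a subalgebra for fixed small $t$, so the right-hand side must be read as a length functional; it becomes the genuine metric $d_{G/H}$ only in the limit along the given sequence, by continuity of $\mathfrak h'\mapsto d_{G/\exp\mathfrak h'}$ on the set of subalgebras.

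Finally I would conclude. Given $(x_n,t_n)\to(H,x,0)$ in $\mathbb G^0$, define $f_n(\overline g):=\Phi^{t_n}(x_n,g)$ for a shortest lift $g\in G$ of $\overline g\in\overline B_{d_{G/H}}(\overline e,R)$; then $f_n(\overline e)=x_n$. Well-definedness up to $o(1)$ and the isometry estimate $|\,t_n^{-1}d_{CC}(f_n\overline g,f_n\overline{g'})-d_{G/H}(\overline g,\overline{g'})\,|\le\epsilon_n$ follow from $(\star)$ together with $\alpha_{t_n^{-1}}(\ker\natural_{x_n})\to\mathfrak h$ and the continuity just mentioned; $\epsilon_n$-density of the image in $\overline B_{t_n^{-1}d_{CC}}(x_n,R)$ follows from the surjectivity of $\Phi^{t_n}(x_n,\cdot)$ onto a neighbourhood of $x_n$ combined with the uniform ball--box inclusion and $(\star)$. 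I would stress that the matching lower bound $t_n^{-1}d_{CC}\ge d_{G/H}-\epsilon_n$ is precisely where $(\star)$ does the real work: a rescaled horizontal path in $M$ joining $x_n$ to $f_n(\overline g)$ of length $<d_{G/H}(\overline e,\overline g)-\epsilon$ would, after lifting to $G$ and applying $\alpha_{t_n^{-1}}$, produce through $(\star)$ a representative of $\overline g$ in $G/H$ of norm $<d_{G/H}(\overline e,\overline g)-\epsilon$, a contradiction. This yields the required $\epsilon_n$-isometries, hence the theorem.
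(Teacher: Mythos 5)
Your overall architecture (build approximation maps, prove uniform convergence of the rescaled distances, conclude by a pointed Gromov--Hausdorff criterion) parallels the paper's, but there is a genuine gap at exactly the point you yourself flag as the main obstacle: the estimate $(\star)$ is asserted, not proven, and it carries essentially all of the analytic content of the theorem. The ``positive homogeneity weight'' heuristic only shows that the composed flow maps converge to the corresponding group-theoretic data as $t\to 0$ (this is what the paper encodes in the BCH-correction map $\phi$ of Theorem \ref{thm:composition_bisub}); convergence of the maps does not by itself give two-sided uniform convergence of \emph{distances}. The upper bound $t^{-1}d_{CC}\le d_{G/H}+\epsilon$ does follow by pushing horizontal words through $\Phi^t$, but the lower bound is the hard direction: one must show that an arbitrary short rescaled horizontal path in $M$ between two image points forces the corresponding cosets to be close in $G/H$, uniformly in the base point $y$ and in the subspace $\ker(\natural_y)$, which varies discontinuously in $y$ -- this is precisely why $\cG^0_x$ is not a single point. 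Your closing argument for the lower bound (``lift the path to $G$ and apply $(\star)$'') is circular, since that lower bound is the unproved half of $(\star)$. In addition, $(\star)$ is not well-formed for fixed $t$: $\alpha_{t^{-1}}(\ker(\natural_y))$ is in general not a subgroup, so the right-hand side is not a quotient metric, and the promised reinterpretation ``as a length functional'' is itself a construction that must be made and then compared with $d_{G/H}$; likewise the continuity of $\mathfrak h'\mapsto d_{G/\exp(\mathfrak h')}$ on subalgebras is used but not established.

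For comparison, the paper does not prove a bespoke uniform nilpotent-approximation estimate. It routes through the groupoid $\mathbb{G}$: the continuous family groupoid charts (Theorem \ref{thm:cont_fam}) and the continuous generating sections (Proposition \ref{prop:cont_sec_alg}) exhibit the rescaled structures on the fibers $M\times\{(x_n,t_n)\}$ and the limit structure on $\Gr(\cF)_x/L$ as a \emph{continuously varying} family of sub-Riemannian structures in a single chart, so that continuity of the fiberwise Carnot--Carath\'eodory distance $d_{\mathbb{G}}$ (Theorem \ref{thm:cont_dist}) can be imported from known results (Bella\"iche's continuity statement plus the continuity theorem for distances of continuously varying generating families); the quasi-norm/distance comparison (Theorem \ref{thm:comparison}) then gives the almost-surjectivity of the approximation maps, the Debord--Skandalis action upgrades small radii to arbitrary radii, and Lemma \ref{lem:jqlhsifdjpoqsjdf} concludes. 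To repair your proof you would either have to actually prove $(\star)$ with its uniformity in $y$ and in the limiting subspace -- i.e.\ redo the hardest part of the theorem by hand -- or place yourself in a framework where the rescaled structures visibly converge to the limit structure and invoke a continuity-of-distance theorem, which is exactly what the tangent groupoid topology is built to provide.
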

Since the projection map $\mathbb{G}^0\to M\times \R$ is proper, it follows that Theorem \ref{thm_intro_tangentcones} gives all possible tangent cones.

We prove Theorem \ref{thm_intro_tangentcones} by constructing the analogue of Connes's tangent groupoid which as a set is equal to $$\mathbb{G}:=M\times M\times \R_+^\times\bigsqcup_{x\in M,H\in \cG^0_x}G/H\times \{0\}\rightrightarrows \mathbb{G}^0.$$ 
We equip $\mathbb{G}$ with a second countable locally compact metrizable topology. We show that $\mathbb{G}$ is a continuous family groupoid in the sense of Paterson \cite{ContFamilyGroupoids}. 
Finally, to deduce Theorem \ref{thm_intro_tangentcones}, we prove that the following function $$\mathbb{G}\to \R_+,\quad  (y,x,t)\mapsto t^{-1}d_{CC}(y,x),\quad  (gH,x,0)\mapsto d_{G/H}(gH,H)$$ is continuous. We end the introduction with a few remarks \begin{enumerate}
\item If $X_1,\cdots,X_d$ have constant rank and $X_1,\cdots,X_d,[X_i,X_j]$ have constant rank, and $X_1,\cdots,X_d,[X_i,X_j],[[X_i,X_j],X_k]$ have constant rank, etc., then the limit \eqref{eqn:limit_bellaiche} is uniform in $x$ and $\cG^0_x=\{\mathfrak{r}_x\}$. 
In this case our tangent groupoid coincides with previous constructions by Choi and Ponge \cite{ChoiPonge} and van Erp and Yuncken \cite{ErikBobTangentGrp}, see also \cite{MohsenGrpTangent, HigsonHaj} for different constructions of the tangent groupoid in this case.
\item The above strategy of computing tangent cones by defining a deformation groupoid appears in Pansu's work \cite{PansuGroupod}, where he computes the tangent cones at infinity of simply connected nilpotent Lie groups.
\item The space $\mathbb{G}^0$ is a closed subset of $\mathbb{G}$. There is a natural $\R_+^\times$-action on $\mathbb{G}$ which leaves the set $\mathbb{G}^0$ invariant. We show that the $\Rpt$-action is proper on $\mathbb{G}\backslash \mathbb{G}^0$ and that the quotient $\mathbb{P}(\mathbb{G}):=(\mathbb{G}\backslash \mathbb{G}^0)/\Rpt$ is second countable metrizable space which is compact if $M$ is compact. As a set $\mathbb{P}(\mathbb{G})$ is equal to the union of the dense open subset $(M\times M)\backslash \Delta_M$ and some other set coming from the tangent cones. Hence, we can view $\mathbb{P}(\mathbb{G})$ as a blowup of $M\times M$ along the diagonal using the sub-Riemannian metric. In a future paper, we will show the connection between this space and the heat kernel and Weyl law of maximally hypoelliptic differential operators, see Remark \ref{rem:heat_ker}.
\end{enumerate}

\textbf{Structure of the paper.}
We think it is easier for the reader to go through the construction of $\mathbb{G}$ and $\mathbb{G}^0$ first, then go through the proofs of their properties. For this reason, the proofs of the results in Sections \ref{sec:tangent_cones} and \ref{sec:tangent_groupoid} are given in Section \ref{sec:proofs}.

The article is organized as follows \begin{itemize}
\item In Section \ref{sec:tangent_cones}, we define the space $\mathbb{G}^0$.
\item In Section \ref{sec:tangent_groupoid}, we define the space $\mathbb{G}$.
\item In Section \ref{sec:proofs}, we prove the results stated in Sections \ref{sec:tangent_cones} and \ref{sec:tangent_groupoid}.
\item In Section \ref{sec:cont_fam_grp}, we prove that $\mathbb{G}$ is a continuous family groupoid in the sense of  Paterson {\cite{ContFamilyGroupoids}}.
\item In Section \ref{sec:Lie_Alg}, we define the Lie algebroid of $\mathbb{G}$.
\item In Section \ref{sec:Debord-Skand-Act}, we define a natural $\R_+^\times$-action. It is the generalization of the action introduced by Debord and Skandalis \cite{DebordSkandalis1} on Connes's tangent groupoid.
\item In Section \ref{sec:blowup_Riem}, we construct the quotient space $\mathbb{P}(\mathbb{G})$.
\item In Section \ref{sec:TangentCones}, we prove Theorem \ref{thm_intro_tangentcones}.
\end{itemize}

\textbf{Acknowledgements.}
We thank P. Pansu for interesting discussions which led to the formulation of Theorem \ref{thm_intro_tangentcones}.	We thank the referees for their remarks and suggestions which helped improve the article.

\textbf{Notations and Conventions.}
\begin{itemize}
\item We use $\exp(X)\cdot x$ for the flow of $X\in \cX(M)$ at time $1$ starting from $x\in M$. Sometimes $X$ will depend on some parameters, denoted by $t$. We stress that throughout the article, we never take any time dependent flows.
\item If $G$ is a Lie group, then its Lie algebra is the vector space of \textit{right} invariant vector fields on $G$. This differs by a sign from the convention usually used in Lie group theory but agrees with the convention usually used in Lie groupoid theory. In particular, for the sake of completeness, the Baker-Campbell-Hausdorff formula is \begin{equation}\label{eqn:Camp}
  \log(e^v\cdot e^w)=v+w-\frac{1}{2}[v,w]+\frac{1}{12}[v,[v,w]+\cdots,\quad v,w\in \mathfrak{g}.
\end{equation}
If $\mathfrak{g}$ is a nilpotent Lie algebra, then the BCH formula is a finite sum, and hence equips $\mathfrak{g}$ with the structure of a Lie group. We will use $\mathfrak{g}$ to denote both the Lie algebra and the Lie group, usually denoting the product by $v\cdot w$ for $v,w\in \mathfrak{g}$. We will usually use $v^{-1}$ instead of $-v$ to denote the inverse of $v$.

If $L\subseteq \mathfrak{g}$ is a Lie subalgebra, then it is also a Lie subgroup. We will use $\mathfrak{g}/L$ to denote the set of left cosets of $L$ and $\frac{\mathfrak{g}}{L}$ for the (rarely used) quotient of $\mathfrak{g}$ by $L$ as vector spaces.
\item If $X,Y$ are topological spaces, then a partially defined function $f:X\to Y$ will always mean a function $f$ defined on an open subset of $X$. 
\end{itemize} 
\section{Tangent cones}\label{sec:tangent_cones}
 \begin{dfn}[\cite{MohsenMaxHypo}] Let $M$ be a smooth manifold.  We recall that $\cX(M)$ denotes the Lie algebra of vector fields on $M$. A weighted sub-Riemannian structure of depth $N\in \N$ is a family $$0=\cF^0\subseteq \cF^1\subseteq \cdots\subseteq \cF^N=\cX(M)$$ of finitely generated $C^\infty(M,\R)$-modules of vector fields such that for all $i,j\in \N$ \begin{equation}\label{eqn:BracketF}
 [\cF^i,\cF^j]\subseteq \cF^{i+j}.
\end{equation}
We will use the convention $\cF^i=\cF^N$ for all $i\geq N$ throughout the article.\end{dfn}
\begin{rem} The results of this article can be extended to the more general situation where $\cF^i$ are only locally finitely generated. We will restrict to the finitely generated case for simplicity.
\end{rem}
 \begin{ex}\label{exs:filtrations} Let $X_1,\cdots,X_d\in \cX(M)$ be vector fields satisfying Hörmander's Lie bracket generating condition of depth $N\in \N$. We can define $\cF^\bullet$ inductively by \begin{align*}
\cF^1=\langle X_1,\cdots,X_d\rangle,\quad \cF^k=\cF^{k-1}+[\cF^{1},\cF^{k-1}].
\end{align*}
More generally if $v_1,\cdots,v_d\in \N$ are natural numbers, thought of as weights of $X_1,\cdots,X_d$. Then we can define $\cF^k$ to be the module generated by the iterated Lie brackets $$[X_{a_1},[\cdots,[X_{a_{n-1}},X_{a_n}]\cdots]$$such that $\sum_{i=1}^nv_{a_i}\leq k$. Hörmander's condition implies that $\cF^{N\max(v_1,\cdots,v_d)}=\cX(M)$.
\end{ex}
If
\begin{equation}\label{eqn:sub-riem}
  \cF^{i+1}=\cF^{i}+[\cF^{i},\cF^{1}]
 \end{equation} for all $i\geq 1$, then a weighted sub-Riemannian structure is just a sub-Riemannian structure. By this, we mean that $\cF^1$ is a module generated by some vector fields which satisfy Hörmander's condition of depth $N$, and $\cF^i$ is the module generated by their iterated Lie brackets of length $\leq i$. 
 
\textbf{Osculating groups.} Let $x\in M$. We define \begin{equation}\label{eqn:dfn_gr}
 \gr(\cF)_x=\bigoplus_{i=1}^N\frac{\cF^i}{\cF^{i-1}+I_x\cF^i},
\end{equation} where $I_x\subseteq C^\infty(M,\R)$ is the ideal of smooth functions vanishing at $x$.  If $X\in \cF^i$, then $[X]_{i,x}$ denotes the class of $X$ in $\frac{\cF^i}{\cF^{i-1}+I_x\cF^i}\subseteq \gr(\cF)_x$. By \eqref{eqn:BracketF}, the Lie bracket of vector fields descends to a bilinear map \begin{align*}\frac{\cF^i}{\cF^{i-1}+I_x\cF^i}\times \frac{\cF^j}{\cF^{j-1}+I_x\cF^j}&\to \frac{\cF^{i+j}}{\cF^{i+j-1}+I_x\cF^{i+j}}.
\end{align*} The space $\gr(\cF)_x$ is thus equipped with the structure of a graded nilpotent Lie algebra given by \begin{align*}
   \forall X\in \cF^i,Y\in \cF^j,\quad [[X]_{i,x},[Y]_{j,x}]:=\begin{cases} \Big[[X,Y]\Big]_{i+j,x}\quad &\text{if}\ i+j\leq N\\0\quad &\text{if}\ i+j>N
   \end{cases}
\end{align*}Hence $\gr(\cF)_x$ is a Lie group by the BCH formula \eqref{eqn:Camp}.

\textbf{Space of tangent cones.}
 For $x\in M$, we denote by $\Grass(\cF)_x$ the Grassmannian manifold of linear subspaces of $\gr(\cF)_x$ of codimension equal to $\dim(M)$. Let  \begin{align}\label{eqn:Grassaf*}
  \Grass(\aF):=M\times \R_+^\times\bigsqcup_{x\in M} \Grass(\cF)_x\times \{(x,0)\}.
\end{align}
We will equip $\Grass(\aF)$ with a topology. To do so we need to choose a set of generators for each module $\cF^i$. We formalise this as follows \begin{dfn}\label{rem:simpler_basis}A graded basis is a pair $(V,\natural)$ where $V=\bigoplus_{i=1}^NV^i$ is a graded $\R$-vector space, \begin{equation}\label{eqn:basis}
 \natural:V\to \cX(M)
\end{equation} an $\R$-linear map 
such that \begin{enumerate}
\item for all $k\in \{1,\cdots,N\}$, $\natural(V^k)\subseteq \cF^k$.
\item  for all $k\in \{1,\cdots,N\}$, the $C^\infty(M,\R)$ module generated by $\natural(\bigoplus_{i=1}^kV^i)$ is equal to $\cF^k$.
\end{enumerate}\end{dfn}

Let $(V,\natural)$ be a graded basis. For $(x,t)\in M\times \R_+^\times$, we define the linear maps
\begin{equation}\label{eqn:natural_V}
\begin{aligned}
&\natural_{t}:V\to \cX(M),\quad \natural_{t}\left(\sum_{i=1}^Nv_i\right)=\sum_{i=1}^Nt^i\natural(v_i),\quad v_i\in V^i\\
& \natural_{x,t}:V\to T_xM,\quad \natural_{x,t}(v)=\natural_{t}(v)(x),\quad v\in V.
\end{aligned}
\end{equation} The map $\natural_{x,t}$ is surjective because $\natural(V)$ generates $\cX(M)$. We define a grading preserving linear map $$\natural_{x,0}:V\to \gr(\cF)_x,\quad \sum_{i=1}^Nv_i\mapsto \sum_{i=1}^N[\natural(v_i)]_{i,x},\quad v_i\in V^i.$$The map $\natural_{x,0}$ is also surjective, see \cite[Proposition 1.1]{MohsenMaxHypo}.

Let $\Grass(V)$ be the Grassmannian manifold of linear subspaces of $V$ of codimension equal to $\dim(M)$. We define \begin{equation}\label{eqn:Grass_natural}
\begin{aligned}
\Grass(\natural):\Grass(\aF)&\to  \Grass(V)\times M\times \R_+\\
(x,t)&\mapsto (\ker(\natural_{(x,t)}),x,t),\quad t\neq 0\\
(L,x,0)&\mapsto (\natural_{x,0}^{-1}(L),x,0),\quad L\in \Grass(\cF)_x
\end{aligned}
\end{equation}
The map $\Grass(\natural)$ is clearly injective. Thus, we are lead to the following definition.
\begin{dfn}The topology on $\Grass(\aF)$ is the topology which makes $\Grass(\natural)$ an embedding.
\end{dfn}
We have used the graded basis $(V,\natural)$ to define the topology on $\Grass(\aF)$. The following proposition which will be proved in Section \ref{sec:proof_prop_well_def_top} says that if we pick a different graded basis, then we would obtain the same topology.
\begin{prop}\label{thm:top_well_defined}\begin{enumerate}
\item The image of the map $\Grass(\natural)$ is a closed subset of $\Grass(V)\times M\times \R_+$.
\item The space $\Grass(\aF)$ is second countable locally compact metrizable, and the natural projection $\Grass(\aF)\to M\times \R_+$ is a continuous proper map. 
\item  The topology on $\Grass(\aF)$ does not depend on the choice of the graded basis. 
\end{enumerate}
\end{prop}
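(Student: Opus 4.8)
The plan is to reduce everything to statement (1); granting (1), statement (2) is formal, and statement (3) follows by passing through a combined graded basis. Indeed, $\Grass(V)\times M\times\R_+$ is second countable, locally compact and metrizable, and these properties pass to the closed subset $\Grass(\natural)(\Grass(\aF))$, which is homeomorphic to $\Grass(\aF)$ by the very definition of the topology; moreover the projection $\Grass(\aF)\to M\times\R_+$ is the restriction to this closed set of the projection $\Grass(V)\times M\times\R_+\to M\times\R_+$, which is proper because $\Grass(V)$ is compact. So the content lies in (1) and (3). I note in advance that the argument for (1) below uses only the two defining properties of a graded basis, so it applies verbatim to any graded basis; this is what makes (3) work.

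For (1), I would take a sequence $(W_n,x_n,t_n)$ in the image of $\Grass(\natural)$ converging to $(W,x,t)$ in $\Grass(V)\times M\times\R_+$. If $t\neq 0$, then $t_n\neq 0$ and $W_n=\ker(\natural_{x_n,t_n})$ for $n$ large, and since $(y,s)\mapsto\natural_{y,s}$ is, in local coordinates near $x$, a continuous family of surjective linear maps for $s\neq 0$, its kernel varies continuously; hence $W=\ker(\natural_{x,t})$, which is in the image. If $t=0$, I claim $W\supseteq\ker(\natural_{x,0})$; granting this, since $\natural_{x,0}$ is surjective and $W$ has codimension $\dim M$, we get $W=\natural_{x,0}^{-1}(\natural_{x,0}(W))$ with $\natural_{x,0}(W)\in\Grass(\cF)_x$, so $(W,x,0)$ is in the image. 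To prove the claim, pass to a subsequence so that either $t_n>0$ for all $n$ or $t_n=0$ for all $n$; since $\natural_{x,0}$ preserves the grading, $\ker(\natural_{x,0})$ is a graded subspace, so it suffices to show that every homogeneous $v\in V^k\cap\ker(\natural_{x,0})$ lies in $W$.

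To establish this, fix a homogeneous basis $(e_l)_l$ of $V$ with weights $\mathrm{wt}(e_l)$. By property (2) of a graded basis, the relation $\natural(v)\in\cF^{k-1}+I_x\cF^k$ can be written
\[
\natural(v)=\sum_{\mathrm{wt}(e_l)\le k}c_l\,\natural(e_l),\qquad c_l\in C^\infty(M,\R),\quad c_l(x)=0\ \text{ whenever }\ \mathrm{wt}(e_l)=k.
\]
For $y\in M$ and $s\ge 0$ set $v_{y,s}:=v-\sum_{\mathrm{wt}(e_l)\le k}s^{\,k-\mathrm{wt}(e_l)}\,c_l(y)\,e_l\in V$ (convention $0^0=1$). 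Using $\natural_s(e_l)=s^{\mathrm{wt}(e_l)}\natural(e_l)$ and $\natural_s(v)=s^k\natural(v)$, one computes $\natural_{y,s}(v_{y,s})=s^k\bigl(\natural(v)-\sum_l c_l\natural(e_l)\bigr)(y)=0$ for $s>0$, so $v_{y,s}\in\ker(\natural_{y,s})$; and $v_{y,0}=v-\sum_{\mathrm{wt}(e_l)=k}c_l(y)\,e_l$ is homogeneous of degree $k$ with $\natural(v_{y,0})\in\cF^{k-1}+I_y\cF^k$ (since $c_l(y)\natural(e_l)$ and $c_l\natural(e_l)$ differ by an element of $I_y\cF^k$), so $v_{y,0}\in\ker(\natural_{y,0})$. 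Hence $v_{x_n,t_n}$ lies in $\ker(\natural_{x_n,t_n})=W_n$ when $t_n>0$ and in $\ker(\natural_{x_n,0})\subseteq W_n$ when $t_n=0$; in all cases $v_{x_n,t_n}\in W_n$. Finally $v_{x_n,t_n}\to v$: for $\mathrm{wt}(e_l)=k$ we have $c_l(x_n)\to c_l(x)=0$, and for $\mathrm{wt}(e_l)<k$ the factor $t_n^{\,k-\mathrm{wt}(e_l)}\to 0$ while $c_l(x_n)$ stays bounded. Since $W_n\to W$ in the Grassmannian and $v_{x_n,t_n}\in W_n$, we conclude $v\in W$. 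This step---pinning down the limits of the kernels at $t=0$ via the explicit correction element $v_{y,s}$, which is built so as to work simultaneously for $s>0$ and $s=0$ thanks to the grading---is the only real obstacle; the remainder is standard continuity of kernels (and, in (3), of intersections) in Grassmannians, together with point-set topology.

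For (3), the plan is to compare via a combined basis. Given graded bases $(V,\natural)$, $(V',\natural')$ with topologies $\mathcal{T},\mathcal{T}'$ on $\Grass(\aF)$, I form the graded basis $(V'',\natural'')$ where $V''=V\oplus V'$, graded with degree-$i$ part $V^i\oplus(V')^i$, and $\natural''(v,v'):=\natural(v)+\natural'(v')$ (the two conditions of a graded basis are immediate), with topology $\mathcal{T}''$; by symmetry it suffices to show $\mathcal{T}''=\mathcal{T}$. Under $V\hookrightarrow V''$ one has $\natural''_{x,t}|_V=\natural_{x,t}$ and $\natural''_{x,0}|_V=\natural_{x,0}$, so for $p\in\Grass(\aF)$ with $\Grass(\natural'')(p)=(W'',x,t)$ and $\Grass(\natural)(p)=(W,x,t)$ we get $W=W''\cap V$; moreover $W''+V=V''$ always, because $\natural_{x,t}$ (resp. $\natural_{x,0}$) is onto $T_xM$ (resp. $\gr(\cF)_x$). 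Hence the first coordinates of the image of $\Grass(\natural'')$ all lie in the open subset $U\subseteq\Grass(V'')$ of subspaces transverse to $V$, on which $W''\mapsto W''\cap V$ is continuous into $\Grass(V)$; so $\Grass(\natural)$ factors as a continuous map composed with $\Grass(\natural'')$, and therefore $\mathrm{id}\colon(\Grass(\aF),\mathcal{T}'')\to(\Grass(\aF),\mathcal{T})$ is continuous. By (1)--(2), applied to the bases $(V,\natural)$ and $(V'',\natural'')$, both $\mathcal{T}$ and $\mathcal{T}''$ are locally compact Hausdorff with proper projection to $M\times\R_+$; hence this continuous bijection is proper---the preimage of a compact $K$ is $K$ itself, which is $\mathcal{T}''$-closed and contained in the $\mathcal{T}''$-compact preimage of the image of $K$ in $M\times\R_+$---so it is closed, hence a homeomorphism. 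Thus $\mathcal{T}''=\mathcal{T}$, and by the same argument $\mathcal{T}''=\mathcal{T}'$, whence $\mathcal{T}=\mathcal{T}'$.
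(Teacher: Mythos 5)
Your parts (1) and (2) are essentially the paper's own argument: the paper likewise reduces (1) to showing that any Grassmannian limit $W$ at $t=0$ contains $\ker(\natural_{x,0})$, and does so with exactly your correction element (in its notation, $v-\sum_{i}f_i(x_n)t_n^{k-\omega(i)}v_i-\sum_{j}g_j(x_n)w_j\in\ker(\natural_{x_n,t_n})$, converging to $v$); you are merely more explicit about the easy $t>0$ limits and about sequences whose terms already sit at $t_n=0$, and (2) is the same formal deduction from (1).

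Part (3) is where you take a genuinely different route, and it is correct. The paper extracts a convergent subsequence of $\ker(\natural'_{x_n,t_n})$ by compactness and identifies its limit using analytic input: the change-of-basis map $\phi$ of Theorem \ref{thm:proof_different_bases} (imported from the appendix of \cite{MohsenMaxHypo}), linearized to a map $\psi$, together with the periodic bounding lemma (Theorem \ref{thm:periodicbounding}) to see that $\psi$ carries $\ker(\natural_{y,t})$ into $\ker(\natural'_{y,t})$. You instead pass to the direct-sum graded basis $V''=V\oplus V'$, note that the $V$-subspace attached to a point of $\Grass(\aF)$ is exactly the intersection of its $V''$-subspace with $V$, that this intersection is automatically transverse because $\natural_{x,t}$ (resp. $\natural_{x,0}$) is already surjective, and that intersecting with a fixed transverse subspace is continuous on the Grassmannian; this yields continuity of the identity from the $V''$-topology to the $V$-topology, and your properness argument (correct: a $\cT$-compact $K$ is $\cT''$-closed inside the $\cT''$-compact set $\pi^{-1}(\pi(K))$, using (1)--(2) for each basis separately, which is not circular) upgrades the continuous bijection to a homeomorphism. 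Your proof is purely linear-algebraic and point-set topological, so it shows that basis-independence of the topology on $\Grass(\aF)$ needs no facts about flows of vector fields; what the paper's approach buys is economy, since Theorems \ref{thm:proof_different_bases} and \ref{thm:periodicbounding} are needed anyway for the topology on $\mathbb{G}$ (e.g.\ in Theorem \ref{thm:top_on_bG} and Proposition \ref{rem:cQ_V_quotient}), and the kernel-to-kernel maps it manufactures are the template for several later arguments in Section \ref{sec:proofs}.
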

In general $M\times \R_+^\times$ is not dense in $\Grass(\aF)$. This leads to the following 
\begin{dfn}\label{dfn:characteristic_set}\label{dfn:bG0} We define \begin{align*}\mathcal{G}^0_x:&=\{L\in \Grass(\cF)_x:(L,x,0)\in \overline{M\times \R_+^\times}\subseteq \Grass(\aF) \}\\
   \mathcal{G}^0:&=\{(L,x):x\in M,L\in \Grass(\cF)_x\}\\
    \mathbb{G}^0:&=\overline{M\times \R_+^\times}=M\times \R_+^\times\sqcup \mathcal{G}^0\times \{0\}\subseteq \Grass(\aF).
\end{align*}
\end{dfn}
We equip these spaces with the subspace topology from $\Grass(\aF)$. We stress that for the rest of the article, we will not use the space $\Grass(\aF)$. Only the subspace $\mathbb{G}^0$ is of interest to us. 
\begin{theorem}\label{thm:prop of blowup}
Let $x\in M$. The following holds
\begin{enumerate}
\item If $L\in \mathcal{G}^0_x$, then $L$ is a Lie subalgebra and a Lie subgroup of $\gr(\cF)_x$.
\item If $g\in \Gr(\cF)_x$, $L\in \mathcal{G}^0_x$, then $gLg^{-1}\in \mathcal{G}^0_x$.
\end{enumerate}
\end{theorem}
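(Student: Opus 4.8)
The plan is to reduce both parts to linear-algebra computations with the structure functions of the vector fields, carried out inside a fixed graded basis. Fix $x\in M$ and $L\in\mathcal{G}^0_x$. Since $(L,x,0)$ lies in the metrizable closure $\overline{M\times\R_+^\times}$, there is a sequence $(x_n,t_n)\in M\times\R_+^\times$ with $x_n\to x$, $t_n\to 0$ and $W_n:=\ker(\natural_{x_n,t_n})\to W:=\natural_{x,0}^{-1}(L)$ in $\Grass(V)$, for a fixed graded basis $(V,\natural)$. Writing $\delta_t\colon V\to V$ for the grading dilation $v_i\mapsto t^iv_i$, one has $\natural_{x,t}=\natural_{x,1}\circ\delta_t$, hence $W_n=\delta_{1/t_n}(\ker\natural_{x_n,1})$. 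Fix a homogeneous basis $e_1,\dots,e_r$ of $V$ with $e_a\in V^{d_a}$, and put $\bar e_a:=\natural_{x,0}(e_a)=[\natural(e_a)]_{d_a,x}$. Since $[\natural(e_a),\natural(e_b)]\in\cF^{d_a+d_b}$, the second graded-basis axiom gives structure functions $f_{ab}^c\in C^\infty(M,\R)$ with $[\natural(e_a),\natural(e_b)]=\sum_{d_c\le d_a+d_b}f_{ab}^c\,\natural(e_c)$, and passing to $\gr(\cF)_x$ with the definition of its bracket yields $[\bar e_a,\bar e_b]=\sum_{d_c=d_a+d_b}f_{ab}^c(x)\,\bar e_c$.

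For part (1) the only geometric input is that the vector fields vanishing at a point form a Lie subalgebra of $\cX(M)$. Given $v=\sum_a\lambda_ae_a$ and $w=\sum_a\mu_ae_a$ in $W$, I would pick $v^{(n)}=\sum_a\lambda_a^{(n)}e_a\to v$ and $w^{(n)}=\sum_a\mu_a^{(n)}e_a\to w$ with $v^{(n)},w^{(n)}\in W_n$. Then $Y_n:=\natural(\delta_{t_n}v^{(n)})=\sum_a\lambda_a^{(n)}t_n^{d_a}\natural(e_a)$ and the analogous $Z_n$ vanish at $x_n$, so $[Y_n,Z_n](x_n)=0$; expanding $[Y_n,Z_n]$ via the structure functions, this is exactly the statement that $\zeta^{(n)}:=\sum_c\big(\sum_{a,b:\,d_c\le d_a+d_b}\lambda_a^{(n)}\mu_b^{(n)}t_n^{d_a+d_b}f_{ab}^c(x_n)\big)e_c$ lies in $\ker\natural_{x_n,1}$, whence $\delta_{1/t_n}(\zeta^{(n)})\in W_n$. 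All powers of $t_n$ appearing in $\delta_{1/t_n}(\zeta^{(n)})$ are $t_n^{d_a+d_b-d_c}$ with exponent $\ge 0$, equal to $0$ exactly when $d_c=d_a+d_b$, so $\delta_{1/t_n}(\zeta^{(n)})\to\zeta:=\sum_c\big(\sum_{a,b:\,d_c=d_a+d_b}\lambda_a\mu_bf_{ab}^c(x)\big)e_c$. Since $W_n\to W$, this forces $\zeta\in W$, and the bracket formula above gives $\natural_{x,0}(\zeta)=[\natural_{x,0}(v),\natural_{x,0}(w)]$ in $\gr(\cF)_x$. As $\natural_{x,0}(W)=L$, this shows $L$ is closed under the bracket, hence a Lie subalgebra of $\gr(\cF)_x$; a subalgebra of a nilpotent Lie algebra is automatically a Lie subgroup.

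For part (2) it is enough to prove that $\mathcal{G}^0_x$ is stable under the differential of conjugation by $\exp\xi$ for each homogeneous $\xi\in(\gr(\cF)_x)^i$, because such conjugations generate $\Ad(\gr(\cF)_x)$. I would lift $\xi$ to $\hat\xi\in V^i$, set $\psi_\epsilon=\exp(\epsilon\natural(\hat\xi))$, and take the new sequence $x'_n:=\psi_{t_n^i}(x_n)\to x$, $t'_n:=t_n\to 0$ — the degree-matched rescaling $\epsilon=t_n^i$ being precisely what keeps all powers of $t_n$ nonnegative. A finite Taylor expansion $\psi_\epsilon^*\natural(e_a)=\sum_{j=0}^{N}\tfrac{\epsilon^j}{j!}\ad_{\natural(\hat\xi)}^j\natural(e_a)+\epsilon^{N+1}R_a(\epsilon)$, with $R_a(\epsilon)\in\cX(M)$ smooth in $\epsilon$, rewritten in the frame $(\natural(e_l))_l$ via the graded-basis axiom — writing $\ad_{\natural(\hat\xi)}^j\natural(e_a)=\sum_{d_l\le ij+d_a}h_l^{(j,a)}\natural(e_l)$ — produces for each $n$ a linear map $\Theta^{(n)}\colon V\to V$ with $\natural_{x'_n,t'_n}=(d\psi_{t_n^i})_{x_n}\circ\natural_{x_n,t_n}\circ\Theta^{(n)}$, hence $\ker(\natural_{x'_n,t'_n})=(\Theta^{(n)})^{-1}(W_n)$. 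Every power of $t_n$ occurring in $\Theta^{(n)}$ is nonnegative, and survives in the limit only on the degree-raising terms ($d_l=ij+d_a$, $j\ge 1$); so $\Theta^{(n)}\to\Theta^\infty$, where $\Theta^\infty(e_a)=e_a+\sum_{j\ge 1}\tfrac1{j!}\sum_{d_l=ij+d_a}h_l^{(j,a)}(x)\,e_l$ is grading-unipotent, hence in $\GL(V)$; in particular $\Theta^{(n)}\in\GL(V)$ for $n$ large. Then $(\Theta^{(n)})^{-1}(W_n)\to(\Theta^\infty)^{-1}(W)$ in $\Grass(V)$, so $(\Theta^\infty)^{-1}(W)=\natural_{x,0}^{-1}(L')$ for some $L'\in\mathcal{G}^0_x$. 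Finally, in $\gr(\cF)_x$ one has $\sum_{d_l=ij+d_a}h_l^{(j,a)}(x)\,\bar e_l=\ad_\xi^j\bar e_a$ (iterate the definition of the bracket on $\gr(\cF)_x$, using $[\natural(\hat\xi)]_{i,x}=\xi$), so $\natural_{x,0}\circ\Theta^\infty=e^{\ad_\xi}\circ\natural_{x,0}$, where $e^{\ad_\xi}$ is the differential of conjugation by $\exp(\pm\xi)$; therefore $L'=\natural_{x,0}((\Theta^\infty)^{-1}(W))=e^{-\ad_\xi}(L)$. Letting $\xi$ range over all homogeneous elements, this shows $\mathcal{G}^0_x$ is stable under all these conjugations, which is part (2).

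The step I expect to be the main obstacle is the flow-conjugation computation in (2): one must carry the Taylor expansion far enough to verify (i) that the degree-matched rescaling forces every exponent of $t_n$ in $\Theta^{(n)}$ to be $\ge 0$, (ii) that the surviving (exponent-zero) coefficients assemble exactly into the finite sum $\sum_j\tfrac1{j!}\ad_\xi^j$ on the osculating algebra, and (iii) that $\Theta^\infty$ is invertible. The other ingredients are routine and I would only cite them: closedness of the incidence relation $\{(u,U):u\in U\}$ in $V\times\Grass(V)$ and continuity of the $\GL(V)$-action on $\Grass(V)$ (for the convergence statements), existence of the structure functions $f_{ab}^c$ and of a decomposition of the Taylor remainder $R_a(\epsilon)$ in the frame $(\natural(e_l))_l$ with coefficients smooth in $\epsilon$ (using that $\cX(M)$ is a projective $C^\infty(M,\R)$-module), and the fact that a subalgebra of a nilpotent Lie algebra integrates to a Lie subgroup.
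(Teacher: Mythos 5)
Your proposal is correct, but it proves the theorem by a genuinely different route than the paper. The paper fixes a graded \emph{Lie} basis $(\mathfrak{g},\natural)$ and imports two black boxes — the correction map $\phi$ of Theorem \ref{thm:composition_bisub} (BCH for the flows of the fields $\natural_t(v)$, from the appendix of \cite{MohsenMaxHypo}) and the periodic bounding lemma (Theorem \ref{thm:periodicbounding}) — to prove the stronger Lemma \ref{lem:qjsidfj}: if $g_n\to g$ in $\mathfrak{g}$ and $y_n=\exp(\natural_{t_n}(g_n))\cdot x_n$, then $\ker(\natural_{y_n,t_n})\to g\,\natural_{x,0}^{-1}(L)\,g^{-1}$. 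Part 2 is then immediate, and Part 1 follows by the slick trick of taking $g_n\in\ker(\natural_{x_n,t_n})$, so that $y_n=x_n$ and one gets $\hat{g}L\hat{g}^{-1}=L$ for all $\hat{g}\in L$. You instead argue directly with an arbitrary graded basis: for Part 1, structure functions $f_{ab}^c$ plus the fact that vector fields vanishing at a point are closed under bracket give $[L,L]\subseteq L$ after the dilation bookkeeping (your exponent count $d_a+d_b-d_c\ge 0$, with equality picking out exactly the osculating bracket, is right, and the degenerate case $d_a+d_b>N$ is consistently handled); for Part 2, you move the base point by $\exp(t_n^i\natural(\hat\xi))$ — which is precisely the paper's $\exp(\natural_{t_n}(\hat\xi))$ for homogeneous $\hat\xi$ — and re-derive by hand, via the flow Taylor expansion and the grading-unipotent limit $\Theta^\infty$ with $\natural_{x,0}\circ\Theta^\infty=e^{\ad_\xi}\circ\natural_{x,0}$, the special case of Lemma \ref{lem:qjsidfj} you need, then recover general conjugation by the (correct, standard) fact that exponentials of homogeneous elements generate the connected nilpotent group $\gr(\cF)_x$. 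What your route buys is self-containedness: no appeal to the appendix map $\phi$, no periodic bounding lemma, and no need for a graded Lie basis; the costs are the explicit bookkeeping you flag yourself (nonnegativity of all $t_n$-exponents, the identification $\sum_{d_l=ij+d_a}h_l^{(j,a)}(x)\bar e_l=\ad_\xi^j\bar e_a$, a module splitting to expand the Taylor remainder in the frame — all of which do go through), plus the sign hedging in $e^{\pm\ad_\xi}$, which is harmless since $\xi$ and $-\xi$ are both homogeneous. The paper's heavier investment pays off later (Lemma \ref{lem:qjsidfj} is reused, e.g.\ for continuity of the range map), whereas your argument is tailored to this theorem alone.
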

Theorem \ref{thm:prop of blowup} will be proved in Section \ref{sec:proof_of_prop_of_blowup}.
 \begin{ex}[Generalised Grushin plane]\label{ex:mainex}Consider $M=\R^2$, $N\in \N$, with $N>1$. Let $$\cF^1=\langle \partial_x,x^{N-1}\partial_y\rangle,\cF^2=\langle \partial_x,x^{N-2}\partial_y\rangle,\cdots,\cF^{N}=\langle \partial_x,\partial_y\rangle=\cX(\R^2).$$ This is the filtration associated to the vector fields $\partial_x$ and $x^{N-1}\partial_y$. A natural graded basis is $V\subseteq \cX(M)$ the linear span of $\partial_x,x^{N-1}\partial_y,\cdots,\partial_y$ with $\natural$ the inclusion map. The grading on $V$ is given by declaring $\partial_x$ to be of degree $1$ and $x^{i}\partial_y$ to be of degree $N-i$. A straightforward computation shows that \begin{align*}
\gr(\cF)_{(a,b)}=\begin{cases}\R[\partial_x]_{1,(a,b)}\oplus \R[x^{N-1}\partial_y]_{1,(a,b)},\quad &a\neq 0\\ \R[\partial_x]_{1,(0,b)}\oplus \R[x^{N-1}\partial_y]_{1,(0,b)}\oplus \cdots \oplus \R[\partial_y]_{N,(0,b)},\quad &a=0\end{cases}.
\end{align*}
The non-trivial Lie brackets are given by $$\big[[\partial_x]_{1,(0,b)},[x^i\partial_y]_{N-i,(0,b)}\big]= i[x^{i-1}\partial_y]_{N-i+1,(0,b)},\quad i\geq 1.$$ If $a\neq 0$, then $\cG^{0}_{(a,b)}=\{(0)\}$ as this is the only subspace of codimension $2$. A simple computation shows that the space $\cG^{0}_{(0,b)}$ consists of the subspaces 
\begin{equation}\label{eqn:G0y_in_examp}
 \Bigg\{\mathrm{span}\Big(\lambda[x^{i-1}\partial_y]_{N-i+1,(0,b)}-[x^{i}\partial_y]_{N-i,(0,b)}:1\leq i\leq N-1\Big):\lambda\in \R\Bigg\}
\end{equation} and \begin{equation}\label{eqn:G0y_in_examp_inf}
 \mathrm{span}\Big([x^i\partial_y]_{N-i,(0,b)}:0\leq i\leq N-2\Big).
\end{equation}
In fact a sequence $((a_n,b_n),t_n)$ converges to $(L,(0,b),0)$ in $\mathbb{G}^0$ for $L \in \cG^{0}_{(0,b)}$ if and only if $(a_n,b_n)\to (0,b),t_n\to 0$ and $\frac{a_n}{t_n}\to \lambda$ if $L$ corresponds to $\lambda$ in \eqref{eqn:G0y_in_examp} or $\frac{a_n}{t_n}\to \infty$ if $L$ is the subspace in \eqref{eqn:G0y_in_examp_inf}. It is easy to see that the space $\mathbb{G}^0$ is homeomorphic to \begin{align*}
\R^2\times \R_+^\times\sqcup \R^*\times \R\times \{0\}\sqcup\mathbb{P}^1(\R)\times \R\times \{0\}
\end{align*}
the classical blow up of $\R^2\times \R_+$ at the subspace $a=t=0$.
\end{ex}
\textbf{Distinguished tangent cone.} Let $x\in M$. There is a distinguished subspace $\mathfrak{r}_x\in \cG^0_x$, which is defined as follows. Let $\ev_x(\cF^i)=\{X(x)\in T_xM:X\in \cF^i\}$. There is a natural evaluation map \begin{equation}\label{eqn:ev_map}
\ev_x:\gr(\cF)_x\to \bigoplus_{i=1}^N\frac{\ev_x(\cF^i)}{\ev_x(\cF^{i-1})}
\end{equation} which takes an element $[X]_{i,x}$ and maps it to $[X(x)]\in \frac{\ev_x(\cF^i)}{\ev_x(\cF^{i-1})}$ for $X\in \cF^i$. The subspace $\mathfrak{r}_x$ is defined as the kernel of $\ev_x$.
\begin{prop}\label{prop:dist_point}For any $x\in M$ and sequence $(t_n)_{n\in \N}\subseteq \Rpt$, the limit of $(x,t_n)$ in the space $\mathbb{G}^0$, as $n\to +\infty$ is equal to $(\mathfrak{r}_x,x,0)$.
\end{prop}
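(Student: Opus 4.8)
The plan is to show that the constant sequence $(x,t_n)$ — or rather any sequence $(x_n,t_n)$ with $x_n=x$ fixed and $t_n\to 0$ — converges in $\mathbb{G}^0$ to $(\mathfrak{r}_x,x,0)$. By the definition of the topology on $\Grass(\aF)$ via the embedding $\Grass(\natural)$, this amounts to computing the limit in $\Grass(V)$ of $\ker(\natural_{x,t_n})$ as $t_n\to 0$ and checking it equals $\natural_{x,0}^{-1}(\mathfrak{r}_x)$. Since the Grassmannian is compact, it suffices to show that every convergent subsequence has this limit; so fix a subsequence along which $\ker(\natural_{x,t_n})\to W$ for some $W\in\Grass(V)$, and the goal is to identify $W=\natural_{x,0}^{-1}(\mathfrak{r}_x)$.

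First I would unwind the definitions. Working in a fixed graded basis $(V,\natural)$, write $v=\sum_{i=1}^N v_i$ with $v_i\in V^i$, so that $\natural_{x,t}(v)=\sum_i t^i\,\natural(v_i)(x)$. The key observation is a two-sided estimate relating membership in $\ker(\natural_{x,t})$ to the ``graded'' conditions. On one hand, if $v\in\ker(\natural_{x,0})$, i.e.\ $\natural_{x,0}(v)=\sum_i[\natural(v_i)]_{i,x}=0$ in $\gr(\cF)_x$, this says inductively that $\natural(v_1)(x)=0$, then $\natural(v_1)+\natural(v_2)$ has trivial class, and so on; but this is exactly saying that $\natural_{x,0}(v)$ lies in $\mathfrak r_x$ only after applying $\ev_x$, so I need to be careful. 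Let me instead phrase it through the evaluation filtration: the relevant statement is that $v\in\natural_{x,0}^{-1}(\mathfrak r_x)$ iff $\natural(v_1)(x)\in\ev_x(\cF^0)=0$ and more generally, writing $X=\natural_t(v)=\sum_i t^i\natural(v_i)$, the vector $X(x)\in T_xM$ vanishes to high order in $t$. Concretely: $\ev_x(v):=\ev_x\circ\natural_{x,0}(v)=0$ means that for each $i$, the partial sum $\natural(v_1)(x)+\cdots$ up to degree $i$ lies in $\ev_x(\cF^{i-1})$, equivalently $\natural_{x,t}(v)=o(t^N)$ — no, more precisely $\natural_{x,t}(v)$ vanishes identically, but that is too strong. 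The correct bookkeeping (this is the technical heart) is: $v\in\natural_{x,0}^{-1}(\mathfrak r_x)$ if and only if $\natural_{x,t}(v)=o(t^i)$ for all $i\le N$ where the implied constant is uniform, i.e.\ $\natural_{x,t}(v)\to 0$ faster than any power. This is false in general since $\natural_{x,t}(v)$ is polynomial in $t$; what is true is that $v\in\natural_{x,0}^{-1}(\mathfrak r_x)$ iff each homogeneous contribution to $X(x)=\sum t^i\natural(v_i)(x)$ is absorbed into lower-order pieces, which one checks coincides with $v\in\ker(\natural_{x,0})+$ (something mapping to $\mathfrak r_x$). I would carefully set up the right statement: $\natural_{x,0}^{-1}(\mathfrak r_x)=\{v: \forall i,\ \natural(v_1)(x)+\cdots+\natural(v_i)(x)\cdot t^{i-1}$-coefficient lies in $\ev_x(\cF^{i-1})\}$, i.e.\ rephrase $\mathfrak r_x=\ker\ev_x$ fiber-wise.

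Having set up this dictionary, the proof of convergence splits into two inclusions. For $W\subseteq\natural_{x,0}^{-1}(\mathfrak r_x)$: take $w\in W$, so $w=\lim w_n$ with $w_n\in\ker(\natural_{x,t_n})$, i.e.\ $\sum_i t_n^i\,\natural((w_n)_i)(x)=0$. Decomposing this identity degree by degree and dividing by successive powers of $t_n$ (using that $w_n\to w$ so the components $(w_n)_i\to w_i$), one extracts in the limit that $\natural(w_1)(x)=0$, then $\natural(w_2)(x)\in\ev_x(\cF^1)$, etc., which is precisely $\ev_x(\natural_{x,0}(w))=0$, i.e.\ $w\in\natural_{x,0}^{-1}(\mathfrak r_x)$. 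Here the main subtlety is that the rescaling argument must be done uniformly, and one should argue via a flag/filtration of subspaces of $V$ rather than naively dividing, since $t_n^i\natural((w_n)_i)(x)$ need not individually vanish. For the reverse inclusion, a dimension count closes the argument: $\dim W=\dim\ker(\natural_{x,t_n})=\dim V-\dim M$ (as $\natural_{x,t}$ is surjective for $t\ne 0$), and $\dim\natural_{x,0}^{-1}(\mathfrak r_x)=\dim\ker(\natural_{x,0})+\dim\mathfrak r_x$; since $\natural_{x,0}$ is surjective onto $\gr(\cF)_x$, $\dim\ker(\natural_{x,0})=\dim V-\dim\gr(\cF)_x$, and $\dim\gr(\cF)_x-\dim\mathfrak r_x=\dim M$ because $\ev_x$ surjects onto $\bigoplus_i\ev_x(\cF^i)/\ev_x(\cF^{i-1})$ whose total dimension is $\dim M$ (the $\ev_x(\cF^i)$ exhaust $T_xM$ by Hörmander). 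Hence $\dim\natural_{x,0}^{-1}(\mathfrak r_x)=\dim V-\dim M=\dim W$, and the first inclusion forces equality. Therefore every subsequential limit is $\natural_{x,0}^{-1}(\mathfrak r_x)$, so $(x,t_n)\to(\mathfrak r_x,x,0)$ in $\mathbb{G}^0$, which also reconfirms $\mathfrak r_x\in\cG^0_x$.

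I expect the main obstacle to be the uniform rescaling argument in the first inclusion: translating the single equation $\sum_i t_n^i\natural((w_n)_i)(x)=0$ together with $w_n\to w$ into the graded vanishing conditions defining $\mathfrak r_x$ requires choosing the right inductively-defined subspaces and being careful that limits of rescaled quantities exist. This is essentially the linear-algebra analogue, at the single point $x$, of the general convergence statement, and is likely handled by the same lemmas used to prove Proposition~\ref{thm:top_well_defined}; indeed one could alternatively deduce Proposition~\ref{prop:dist_point} directly from a more refined description of convergence in $\mathbb{G}^0$ established there.
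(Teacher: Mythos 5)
Your argument is correct, but it follows a genuinely different route from the paper. The paper exploits the basis-independence of the topology (Proposition \ref{thm:top_well_defined}.3): it chooses a graded basis adapted to the point $x$, built level by level so that at each degree the new generators either have linearly independent values at $x$ complementing $\ev_x(\cF^{i-1})$ or vanish at $x$; in that basis $\ker(\natural_{x,t})$ is literally independent of $t$ and equals $\natural_{x,0}^{-1}(\mathfrak{r}_x)$, so there is nothing to pass to the limit. You instead work in an arbitrary fixed graded basis, use compactness of $\Grass(V)$ to reduce to a subsequential limit $W$, prove $W\subseteq\natural_{x,0}^{-1}(\mathfrak{r}_x)$ by dividing the relation $\sum_i t_n^i\,\natural((w_n)_i)(x)=0$ by $t_n^i$, and close with the dimension count $\dim W=\dim V-\dim M=\dim\natural_{x,0}^{-1}(\mathfrak{r}_x)$ (using surjectivity of $\natural_{x,t}$, of $\natural_{x,0}$, and of $\ev_x$). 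Your route is self-contained linear algebra in one basis and yields the explicit description $\natural_{x,0}^{-1}(\mathfrak{r}_x)=\{v:\natural(v_i)(x)\in\ev_x(\cF^{i-1})\ \forall i\}$, at the price of the rescaling step; the paper's choice of adapted basis makes the proof two lines but leans on the independence-of-basis result. Also, the step you flag as the ``technical heart'' is not actually delicate: after dividing by $t_n^i$, the left side $\natural((w_n)_i)(x)+\sum_{j>i}t_n^{j-i}\natural((w_n)_j)(x)$ converges to $\natural(w_i)(x)$, while the right side $-\sum_{j<i}t_n^{j-i}\natural((w_n)_j)(x)$ lies for every $n$ in the fixed closed subspace $\ev_x(\cF^{i-1})$, so no convergence of the rescaled terms is needed and no flag argument is required; each $i$ is handled independently. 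Your intermediate ``dictionary'' paragraph contains several retracted or garbled formulations of this characterization, and in a final write-up it should be replaced by the one displayed above, which is what your inclusion argument actually uses.
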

\begin{proof}
Since the topology of $\Grass(\aF)$ is independent of the choice of graded basis, we are free to choose the graded basis to check convergence. To this end, we choose a basis as follows. Let $X_1,\cdots,X_{k_1}\in \cF^1$ be generators. By constant coefficient linear transformation, we can change the basis so that $X_1(x),\cdots,X_{s_1}(x)$ for some $s_1$ form a basis of $\ev_x(\cF^1)$ and $X_{s_1+1}(x)=\cdots=X_{k_1}(x)=0$. For $\cF^2$, we add to $X_1,\cdots,X_{k_1}$ vector fields $Y_1,\cdots,Y_{k_2}$ such that $X_1,\cdots,X_{k_1},Y_1,\cdots,Y_{k_2}$ generate $\cF^2$. By a constant coefficient linear transformation only changing the $Y$'s, we can suppose that $X_1(x),\cdots,X_{s_1}(x),Y_1(x),\cdots,Y_{s_2}(x)$ form a basis of $\ev(\cF^2)$  and $Y_{s_2+1}(x)=\cdots =Y_{k_2}(x)=0$. We continue this way till $\cF^N$. In the end we get a basis in which $\ker(\natural_{x,t})$ does not depend on $t$ and is equal to $\natural_{x,0}^{-1}(\mathfrak{r}_x)$. The result follows.
\end{proof}

\begin{ex}In Example \ref{ex:mainex}, $\mathfrak{r}_{(0,b)}=\mathrm{span}([x^i\partial_y]_{N-i,(0,b)}:1\leq i\leq N-1)$. We remark that in $\mathbb{G}^0$, the limit of $(\mathfrak{r}_{(a,b)},(a,b),0)=(0,(a,b),0)$ for $a\neq 0$ as $a\to 0$ converges to the point $$\Big(\mathrm{span}([x^i\partial_y]_{N-i,(0,b)}:0\leq i\leq N-2),(0,b),0\Big)$$ which differs from $(\mathfrak{r}_{(0,b)},(0,b),0)$. In particular the map \begin{equation}\label{eqn:mapxtorx}
 M\mapsto \mathbb{G}^0,\quad x\to (\mathfrak{r}_x,x,0)
\end{equation} \textbf{is not} continuous in general. 
\end{ex}
\begin{rem}\label{rem:heat_ker}
The discontinuity of the map \eqref{eqn:mapxtorx} is the reason for the failure of various results in the literature on the heat kernel asymptotics, for example \cite[Theorem A]{ColinLucHeatkernel1}, to be uniform in $x\in M$.
 We will expand further this remark in a future article.
\end{rem}
\section{Tangent groupoid}\label{sec:tangent_groupoid}
By Theorem \ref{thm:prop of blowup}.1, if $L\in \cG^0_x$, then $L$ is a Lie subgroup of $\Gr(\cF)_x$. Let \begin{align*}
\cG_x=\{gL:g\in \Gr(\cF)_x,L\in \cG^0_x\}&=\{Lg:g\in \Gr(\cF)_x,L\in \cG^0_x\}\\&=\{gLh:g,h\in \Gr(\cF)_x,L\in \cG^0_x\}
\end{align*}
 be the set of all cosets of all subgroups in $\cG^0_x$. The fact that left cosets of elements of $\cG^0_x$ are also right cosets follows from Theorem \ref{thm:prop of blowup}.2. Let $$\cG:=\{(gL,x):x\in M,gL\in \cG_x\},\quad  \mathbb{G}:=M\times M\times \R_+^\times\sqcup \cG\times \{0\}.$$
We equip $\mathbb{G}$ with a topology as follows.  A subset $U\subseteq\mathbb{G}$ is open if and only if  \begin{itemize}
\item  $U\cap (M\times M\times \R_+^\times)$ is an open subset of $M\times M\times \R_+^\times$ with its usual topology.
\item  For any graded basis $(V,\natural)$, $\cQ_{V}^{-1}(U)$ is open, where $\cQ_{V}$ is the following partially defined map \begin{equation}\label{eqn:dfn_tangent_grp_flow}
\begin{aligned}
 \cQ_V:V\times \mathbb{G}^0&\to \mathbb{G} \\
 (v,x,t)&\mapsto (\exp(\natural_t(v))\cdot x,x,t),\quad v\in V,x\in M,t\in \R_+^\times\\
 (v,L,x,0)&\mapsto (\natural_{x,0}(v)L,x,0),\quad v\in V,x\in M,L\in \cG^0_x,
\end{aligned}
\end{equation}
The domain of $\cQ_V$ is $$(V\times \cG^0\times \{0\})\sqcup \{(v,x,t)\in V\times M\times \R_+^\times:\exp(\natural_t(v))\cdot x \text{ is well-defined}\}.$$
Since $\natural_{t}(v)\in \cX(M)$ converges to $0$ as $t\to 0^+$ in the $C^\infty$ topology uniformly on compact subsets of $M$, it follows that $\dom(\cQ_V)$ is an open subset of $V\times \mathbb{G}^0$.
\end{itemize}
We equip $\cG$ with the subspace topology from $\mathbb{G}$.
\begin{theorem}\label{thm:top_on_bG}\begin{enumerate}
\item If $U\subseteq \mathbb{G}$ such that $U\cap (M\times M\times \R_+^\times)$ is an open subset of $M\times M\times \R_+^\times$ and $\cQ_{V}^{-1}(U)$ is open for some $(V,\natural)$ graded basis, then $U$ is open.
\item The space $\mathbb{G}$ is a second countable locally compact metrizable topological space. 
\end{enumerate}
\end{theorem}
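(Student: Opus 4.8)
The plan is to prove the two statements in reverse logical order, since metrizability essentially subsumes the (a priori non-obvious) claim that the two conditions defining open sets are consistent. First I would verify that the recipe given for open sets actually defines a topology: arbitrary unions and finite intersections of sets satisfying both bullet conditions again satisfy both, because $\cQ_V^{-1}$ commutes with unions and finite intersections, and likewise for restriction to $M\times M\times \R_+^\times$. So $\mathbb{G}$ is a topological space, and part (1) is then the statement that a single graded basis suffices to test openness. I would deduce (1) as a corollary of the construction in (2): once we exhibit, for each graded basis $(V,\natural)$, a concrete local homeomorphism onto a subset of a metric space whose charts are built from $\cQ_V$, the independence of the graded basis follows exactly as in Proposition \ref{thm:top_well_defined}(3), i.e.\ by checking that the transition map between the charts associated to two graded bases $(V,\natural)$ and $(V',\natural')$ is a homeomorphism. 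That transition map is, away from $t=0$, essentially the identity on $M\times M\times\R_+^\times$ expressed in two different exponential parametrisations of the source fibres, and at $t=0$ it is the linear-algebra change-of-basis map $\natural_{x,0}^{-1}\circ\natural'_{x,0}$ already handled in Section \ref{sec:proof_prop_well_def_top}; the key continuity input is that $\natural_t(v)\to 0$ in $C^\infty_{loc}$ as $t\to 0^+$, which is noted in the excerpt.

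For part (2) the strategy is to build an explicit atlas of ``exponential charts'' and a compatible metric. Fix a graded basis $(V,\natural)$ with $V$ of dimension $D$. The map $\cQ_V:\dom(\cQ_V)\to\mathbb{G}$ is surjective (every point of $M\times M\times\R_+^\times$ near the diagonal is $(\exp(\natural_t v)\cdot x,x,t)$ for suitable $v$ by Chow/Hörmander plus the fact that $\natural_t$ is surjective onto $T_xM$; every coset $gL$ is $\natural_{x,0}(v)L$ since $\natural_{x,0}$ is surjective onto $\gr(\cF)_x$). Its fibres over a point in the $t=0$ part are cosets of $\natural_{x,0}^{-1}(L)$, which is a subspace of $V$ of dimension $D-\dim M$; over a point with $t\neq0$ the fibre is discrete (locally a single point once $v$ is restricted to a small ball, since $\exp$ is a local diffeomorphism). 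So $\cQ_V$ is locally the quotient of an open subset of $V\times\mathbb{G}^0$ by a locally trivial vector-bundle-like equivalence relation, and I would use this to produce, near any point of $\mathbb{G}$, a chart: choose a complement to the relevant subspace inside $V$, restrict $\cQ_V$ to that slice, and check it is a homeomorphism onto an open neighbourhood. Second countability is then immediate since $\mathbb{G}^0$ is second countable (Proposition \ref{thm:top_well_defined}) and $V$ is, so $V\times\mathbb{G}^0$ is, and $\cQ_V$ is a continuous open surjection. Local compactness follows from the charts together with local compactness of $\mathbb{G}^0$ and of $V$. Hausdorffness — the part that needs genuine care — requires separating a boundary point $(gL,x,0)$ from nearby interior points $(y_n,x_n,t_n)$ and from other boundary points: I would reduce this to Hausdorffness of $\mathbb{G}^0\times V$ modulo the equivalence relation, using that the relation's graph is closed, which in turn rests on the closedness statement of Theorem \ref{thm:prop of blowup}(1)–(2) (limits of the subgroups stay subgroups, and cosets vary ``closedly''). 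Finally, metrizability follows from the Urysohn metrization theorem once we have second countable, Hausdorff, and regular (regularity coming for free from local compactness + Hausdorff, or directly from the charts).

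The main obstacle I expect is the Hausdorff property at the boundary $\cG\times\{0\}$, specifically showing that a sequence $(y_n,x_n,t_n)$ in the interior cannot converge simultaneously to two different cosets $(gL,x,0)$ and $(g'L',x,0)$, and cannot converge to a boundary point while also converging to a distinct interior point. This is exactly the point where one must use the deformation structure: writing $y_n=\exp(\natural_{t_n}(v_n))\cdot x_n$ with $v_n$ in a fixed bounded slice, convergence of $(y_n,x_n,t_n)$ forces $v_n$ (mod the subspace $\natural_{x_n,0}^{-1}(\ker\natural_{x_n,t_n})$) to converge, and the limit of these subspaces in $\Grass(V)$ is governed by $\mathbb{G}^0$, which is already known to be Hausdorff. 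So the real content is a careful bookkeeping lemma: the map sending $(v,x,t)$ to its class determines the point of $\mathbb{G}$ and varies continuously, and two graded-basis-chart descriptions of the same point agree. I would isolate this as a lemma about the equivalence relation $R_V\subseteq (V\times\mathbb{G}^0)^2$ defined by $\cQ_V$, prove $R_V$ is closed and that $\cQ_V$ is open, and then invoke the standard fact that the quotient of a second countable locally compact space by a closed open equivalence relation with ``affine'' fibres of locally constant dimension is again second countable locally compact metrizable. Everything else — that the prescription defines a topology, that one graded basis suffices, second countability, local compactness — is bookkeeping built on Proposition \ref{thm:top_well_defined} and the smooth dependence $\natural_t(v)\to 0$.
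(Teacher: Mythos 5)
Your overall scheme --- treat $\cQ_V$ as an open, quotient-like map near the boundary, show the induced equivalence relation is closed, then deduce Hausdorffness, second countability, local compactness, and conclude with Urysohn --- has the same general shape as the paper's argument (Proposition \ref{rem:cQ_V_quotient} together with Section \ref{sec:proof_Hasud}). But the proposal has gaps exactly where the real work lies. First, the local picture of $\cQ_V$ is wrong: for $t\neq 0$ the fibre over $(y,x,t)$ is the solution set of $y=\exp(\natural_t(v))\cdot x$ inside $V$, and since $\dim V>\dim M$ in general this is not discrete and $v\mapsto \exp(\natural_t(v))\cdot x$ is not a local diffeomorphism; it is at best a submersion, and the paper must restrict to a set $\mathbb{U}$ in Proposition \ref{rem:cQ_V_quotient} precisely because submersivity can fail away from $v=0$. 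Second, the two analytic inputs that make the strategy run never appear in your outline. Both the openness of $\cQ_V$ (Lemma \ref{lem:sqjdfjlq}) and the Hausdorff/closed-relation step need (a) the composition map $\phi$ of Theorem \ref{thm:composition_bisub} (a BCH-type comparison of flows of the rescaled fields), which converts two solutions $\tilde g_n,\tilde h_n$ over the same point into a single vector $\phi(-\tilde h_n,\tilde g_n,x_n,t_n)$ whose flow fixes $x_n$, and (b) the periodic bounding lemma (Theorem \ref{thm:periodicbounding}), which upgrades ``$\exp(\natural_{t}(w))\cdot x=x$ with $w$ bounded and $t$ small'' to ``$w\in\ker(\natural_{x,t})$''. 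Only then do Proposition \ref{prop:conv_Grass} and convergence in $\mathbb{G}^0$ give $\tilde h^{-1}\tilde g\in\natural_{x,0}^{-1}(L)$, i.e.\ $gL=hL$. The substitutes you invoke --- ``$\natural_t(v)\to 0$ in $C^\infty_{loc}$'' and Theorem \ref{thm:prop of blowup}(1)--(2) --- do not yield this: the fibres of $\cQ_V$ at $t\neq0$ are \emph{not} cosets of $\ker(\natural_{x,t})$ (your expression $\natural_{x_n,0}^{-1}(\ker\natural_{x_n,t_n})$ does not even typecheck), they are only asymptotically so, and that asymptotic statement is precisely (a)$+$(b).

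The same omission affects your treatment of part (1). Away from $t=0$ the transition between two graded bases is not ``essentially the identity'': one needs Theorem \ref{thm:proof_different_bases}, which produces a continuous (in fact smooth) map $\phi(v,x,t)$ with $\exp(\natural_t(v))\cdot x=\exp(\natural'_t(\phi(v,x,t)))\cdot x$ and $\natural'_{x,0}(\phi(v,x,0))=\natural_{x,0}(v)$; the paper's proof of \ref{thm:top_on_bG}.1 is exactly the observation that the induced map $\hat\phi$ on $V\times\mathbb{G}^0$ intertwines $\cQ_V$ and $\cQ_{V'}$ and is defined on a neighbourhood of $V\times\cG^0\times\{0\}$. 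Knowing only that $\natural_t(v)\to 0$ does not let you solve for $v'$ continuously down to $t=0$ with the correct linear-algebra limit. Note also the logical order: the paper proves part (1) \emph{first} and then uses it (via continuity of $s$, Proposition \ref{rem:cQ_V_quotient}, and the separation argument of Section \ref{sec:proof_Hasud}) to get part (2), whereas your plan derives (1) from charts whose construction already presupposes the basis-comparison result. The remaining bookkeeping in your proposal (that the prescription defines a topology, second countability from $\mathbb{G}=(M\times M\times\R_+^\times)\cup\cQ_{\mathfrak{g}}(\mathbb{U})$, local compactness, regularity, Urysohn) is fine and matches the paper.
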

Convergence in $\mathbb{G}$ is easy to describe as follows 
\begin{prop}\label{prop:Conv}\begin{enumerate}
\item   Let $(y_n,x_n,t_n)\in M\times  M\times \R_+^\times$ be a sequence and $(gL,x,0)\in \cG\times \{0\}$. The following are equivalent
\begin{enumerate}
\item The sequence $(y_n,x_n,t_n)$ converges to $(gL,x,0)$ in $\mathbb{G}$
\item For any (or for some) graded basis $(V,\natural)$ and for any (or for some) $v\in V$, such that $\natural_{x,0}(v)\in gL$, there exists $v_n\in V$ such that $v_n\to v$, $x_n\to x$, $t_n\to 0$, $\ker(\natural_{x_n,t_n})\to L$, and $y_n=\exp(\natural_{t_n}(v_n))\cdot x_n$.
\end{enumerate}
\item Let $(g_nL_n,x_n,0)\in \cG\times \{0\}$ be a sequence, $(gL,x,0)\in \cG\times \{0\}$. The following are equivalent \begin{enumerate}
\item The sequence $(g_nL_n,x_n,0)$ converges to $(gL,x,0)$ in $\mathbb{G}$
\item For any (or for some) graded basis $(V,\natural)$ and for any (or for some) $v\in V$, such that $\natural_{x,0}(v)\in gL$, there exists $v_n\in V$ such that $v_n\to v$, $x_n\to x$, $\natural_{x_n,0}^{-1}(L_n)\to \natural_{x,0}^{-1}(L)$, and $\natural_{x_n,0}(v_n)\in g_nL_n$.
\end{enumerate}
\end{enumerate}
\end{prop}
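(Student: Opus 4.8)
\emph{Set-up.} By construction the topology on $\mathbb{G}$ is the final topology for the family consisting of the inclusion $M\times M\times\R_+^\times\hookrightarrow\mathbb{G}$ and the maps $\cQ_V$, over all graded bases $(V,\natural)$. Hence each $\cQ_V\colon\dom(\cQ_V)\to\mathbb{G}$ is continuous (with $\dom(\cQ_V)$ open in $V\times\mathbb{G}^0$), and a map out of $\mathbb{G}$ is continuous as soon as its precompositions with $M\times M\times\R_+^\times\hookrightarrow\mathbb{G}$ and with every $\cQ_V$ are continuous; for instance this gives at once the continuity of $r\colon\mathbb{G}\to\mathbb{G}^0$, $(y,x,t)\mapsto(x,t)$ for $t>0$, $(gL,x,0)\mapsto(L,x,0)$ (well defined since a coset determines its subgroup, and with $r\circ\cQ_V$ equal to the projection $V\times\mathbb{G}^0\to\mathbb{G}^0$). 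Recall also that, by Proposition~\ref{thm:top_well_defined}, convergence in $\mathbb{G}^0$ is convergence of the data $(\ker\natural_{x,t},x,t)$, resp. $(\natural_{x,0}^{-1}(L),x,0)$, in $\Grass(V)\times M\times\R_+$; this is the meaning of ``$\ker(\natural_{x_n,t_n})\to L$'' and ``$\natural_{x_n,0}^{-1}(L_n)\to\natural_{x,0}^{-1}(L)$'' in the statement. With these remarks, $(b)\Rightarrow(a)$ in both parts is just sequential continuity of $\cQ_V$: the hypotheses of (b) say exactly that $(v_n,x_n,t_n)\to(v,L,x,0)$, resp. $(v_n,L_n,x_n,0)\to(v,L,x,0)$, in $\dom(\cQ_V)$, while $\cQ_V$ sends this sequence to $(y_n,x_n,t_n)$, resp. $(g_nL_n,x_n,0)$, and sends the limit to $(\natural_{x,0}(v)L,x,0)=(gL,x,0)$, using $\natural_{x,0}(v)\in gL$.

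\emph{The implications $(a)\Rightarrow(b)$.} Fix an arbitrary graded basis $(V,\natural)$ and, since $\natural_{x,0}$ is onto, an arbitrary $v\in V$ with $\natural_{x,0}(v)\in gL$; put $w_0=(v,L,x,0)\in\dom(\cQ_V)$, so $\cQ_V(w_0)=(gL,x,0)$. The decisive point is that $\cQ_V$ is an \emph{open} map near $w_0$, i.e. it sends each neighbourhood of $w_0$ onto a neighbourhood of $\cQ_V(w_0)$ in $\mathbb{G}$. Granting this, and using that $\mathbb{G}$ is metrizable, hence first countable (Theorem~\ref{thm:top_on_bG}.2), from $(y_n,x_n,t_n)\to(gL,x,0)$ a routine diagonal argument over a countable neighbourhood basis of $w_0$ produces $w_n\in\dom(\cQ_V)$ with $\cQ_V(w_n)=(y_n,x_n,t_n)$ and $w_n\to w_0$. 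Since $t_n>0$, matching the last two coordinates forces $w_n=(v_n,x_n,t_n)$ with $y_n=\exp(\natural_{t_n}(v_n))\cdot x_n$, and $w_n\to w_0$ unpacks into exactly $v_n\to v$, $x_n\to x$, $t_n\to0$, $\ker(\natural_{x_n,t_n})\to\natural_{x,0}^{-1}(L)$, which is (b). Part~(2) is the same, lifting instead along $\cQ_V(\cdot,L,x,0)\colon V\to\cG_x$, $v\mapsto\natural_{x,0}(v)L$; on $\{t=0\}$ this is the composite of the linear surjection $\natural_{x,0}$ with the coset projection $\gr(\cF)_x\to\gr(\cF)_x/L$, an honest submersion varying continuously with $(L,x)$, so the lift is immediate from the implicit function theorem with parameters together with the continuity of $r$ (which yields $\natural_{x_n,0}^{-1}(L_n)\to\natural_{x,0}^{-1}(L)$).

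\emph{The main obstacle.} Everything thus reduces to the openness of $\cQ_V$ near each point $(v,L,x,0)$ with $L\in\cG^0_x$, and this is where I expect the real work to lie. By Theorem~\ref{thm:top_on_bG}.1 one may test openness of a subset of $\mathbb{G}$ against a single graded basis, so I would fix a convenient one, take a basic neighbourhood $W$ of $(v,L,x,0)$, and verify that $\cQ_V(W)$ meets the defining conditions for openness. The mechanism is that for $t>0$ the map $v\mapsto\exp(\natural_{t}(v))\cdot x$ is a submersion: writing $\delta_t$ for the dilation on the graded vector space $V$ one has $\natural_t=\natural\circ\delta_t$ and $\natural_{x,t}=\natural_{x,1}\circ\delta_t$, and the differential of $v\mapsto\exp(\natural_t(v))\cdot x$ is a perturbation of $\natural_{x,t}$ whose correction terms, after the rescaling by $\delta_t$, decay as $t\to0^+$ at exactly the rate of $\natural_{x,t}$ itself, while $\natural_{x,1}$ is surjective onto $T_xM$ by Hörmander's condition; combined with the implicit function theorem with parameters $(x,t)$ and the description of convergence in $\mathbb{G}^0$, this gives openness near the boundary fiber, which is all that is needed. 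Pinning down the uniformity as $t\to0^+$ — that the quantitative submersion estimates for $v\mapsto\exp(\natural_t(v))\cdot x$ degrade precisely compatibly with the dilation $\delta_t$ — is the technical heart of the matter; the rest is bookkeeping with the final topology.
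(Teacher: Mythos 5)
Your route is the paper's route: for (b)$\Rightarrow$(a) you use continuity of $\cQ_V$, which is immediate from the definition of the topology, and for (a)$\Rightarrow$(b) you use openness of $\cQ_V$ near $V\times\cG^0\times\{0\}$ together with first countability, then lift the sequence and unpack coordinates. This is exactly the paper's (very short) proof, which invokes Proposition \ref{rem:cQ_V_quotient} and Theorem \ref{thm:top_on_bG}.2. Your diagonal lifting argument, the observation that for $t_n>0$ a preimage of $(y_n,x_n,t_n)$ must have the form $(v_n,x_n,t_n)$ with $y_n=\exp(\natural_{t_n}(v_n))\cdot x_n$, and Part 2 (which can be handled by the identical diagonal argument, since $V\times\cG^0\times\{0\}\subseteq\mathbb{U}$ and a coset determines its subgroup) are all fine. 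A minor slip: the map you call $r$ is the paper's source map $s$; the paper's $r$ sends $(gL,x,0)$ to $(gLg^{-1},x,0)$. This does not affect your argument.

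The issue is with the step you yourself flag as the main obstacle. The openness of $\cQ_V$ near boundary points is precisely Proposition \ref{rem:cQ_V_quotient}, which is stated in the paper and simply cited in the paper's proof of Proposition \ref{prop:Conv}; had you invoked it, your proof would be complete and essentially identical to the paper's. But your sketched justification of that openness is not adequate. Uniform submersion estimates for $v\mapsto\exp(\natural_t(v))\cdot x$ as $t\to 0^+$ only yield condition (2) of Proposition \ref{rem:cQ_V_quotient}, i.e.\ openness of the $t>0$ part of the image inside $M\times M\times\R_+^\times$. To see that $\cQ_V(W)$ is a neighbourhood of a point of $\cG\times\{0\}$ one must, by the very definition of the topology on $\mathbb{G}$ (and Theorem \ref{thm:top_on_bG}.1), show that the saturation $\cQ_V^{-1}(\cQ_V(W))$ is open; equivalently, given two preimages $(v',L,x,0)$ and $(v,L,x,0)$ of the same boundary point, one needs a continuous, partially defined, $\cQ_V$-preserving transformation of $V\times\mathbb{G}^0$ carrying the first to the second. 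This is Lemma \ref{lem:sqjdfjlq} in the paper, and its proof requires genuinely more than the implicit function theorem with parameters: it uses the correction map $\phi$ of Theorem \ref{thm:composition_bisub} (the substitute for an action of $\mathfrak{g}$ on $M$), a continuous section of the tautological bundle over the Grassmannian selecting elements of $\ker(\natural_{y,t})$ near the limit, and the periodic bounding lemma (Theorem \ref{thm:periodicbounding}) to recognize that flowing along such elements fixes the base point. So the technical heart is not the uniformity of the submersion estimates but the identification of the fibers of $\cQ_V$ at $t=0$ and their continuous permutation; as a self-contained argument your proof of the key openness statement has a gap at exactly this point, though it disappears if you quote Proposition \ref{rem:cQ_V_quotient} as the paper does.
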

For the reader familiar with Connes's tangent groupoid, the vector $v_n$ in Proposition \ref{prop:Conv}.1 is the equivalent of $t_n^{-1}(y_n-x_n)$ in the case of the tangent groupoid. In fact one way to think of the topology of $\mathbb{G}$ is that it captures the observation that if $(V,\natural)$ is a graded basis, $y$ and $x$ are close to each other and $t$ is small, then the space of solutions to the equation $y=\exp(\natural_t(v))\cdot x$ 'converges' to a coset of a subgroup of $\gr(\cF)_x$.

Let $(V,\natural)$ be a graded basis. The map $\cQ_{V}$ is continuous by definition, but in general it is not a quotient map. The issue here is the failure in general of the exponential map of vector fields to be a submersion away from $0$. This issue is easily fixed as the next proposition shows.
\begin{prop}\label{rem:cQ_V_quotient}Let $(V,\natural)$ be a graded basis. If $\mathbb{U}\subseteq \dom(\cQ_V)$ is an open subset, such that \footnote{For the reader familiar with \cite{AS1}, the set $\mathbb{U}$ is essentially what Androulidakis and Skandalis call a bisubmersion in our setting.} \begin{enumerate}
\item $V\times \cG^0\times \{0\}\subseteq \mathbb{U}$ 
\item The map  $\cQ_{V|\mathbb{U}\cap (M\times M\times \R_+^\times)}:\mathbb{U}\cap (M\times M\times \R_+^\times) \to M\times M\times \Rpt$ is a submersion.
\end{enumerate} 
Then the map $\cQ_{V|\mathbb{U}}:\mathbb{U}\to \mathbb{G}$ is open. Furthermore, an open subset $\mathbb{U}$ satisfying the above always exist.
\end{prop}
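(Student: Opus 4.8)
The plan is to split the statement into two assertions: (a) if $\mathbb{U}\subseteq\dom(\cQ_V)$ satisfies conditions (1)--(2), then $\cQ_{V|\mathbb{U}}\colon\mathbb{U}\to\mathbb{G}$ is open; (b) such a $\mathbb{U}$ exists. For (a), recall that a subset $W\subseteq\mathbb{G}$ is open precisely when $W\cap(M\times M\times\R_+^\times)$ is open in the usual topology and $\cQ_{V'}^{-1}(W)$ is open for \emph{every} graded basis $(V',\natural')$; by Theorem \ref{thm:top_on_bG}.1 it suffices to check this for the single fixed graded basis $(V,\natural)$. So, given $\mathbb{O}\subseteq\mathbb{U}$ open, I must show that $\cQ_V(\mathbb{O})$ meets $M\times M\times\R_+^\times$ in an open set and that $\cQ_V^{-1}(\cQ_V(\mathbb{O}))$ is open. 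The first point is immediate from condition (2): on $\mathbb{U}\cap(M\times M\times\R_+^\times)$ the map $\cQ_V$ is a submersion, hence open, and $\cQ_V(\mathbb{O})\cap(M\times M\times\R_+^\times)=\cQ_V(\mathbb{O}\cap(M\times M\times\R_+^\times))$. For the preimage being open, I would argue pointwise: take $(v,\pi)\in\cQ_V^{-1}(\cQ_V(\mathbb{O}))$, so $\cQ_V(v,\pi)=\cQ_V(v',\pi')$ for some $(v',\pi')\in\mathbb{O}$. If $\pi$ lies over $t\neq 0$, then $\pi'$ also lies over $t\neq 0$, both points are in $\mathbb{U}\cap(M\times M\times\R_+^\times)$, and openness of the submersion $\cQ_{V|\mathbb{U}\cap(M\times M\times\R_+^\times)}$ gives a local smooth section through $(v',\pi')$, which one composes with $\cQ_V$ to produce a neighbourhood of $(v,\pi)$ inside the preimage. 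If $\pi=(L,x,0)$ lies over $t=0$, then by condition (1) a whole slice $V\times\cG^0\times\{0\}$ sits in $\mathbb{U}$, and the fibre $\cQ_V^{-1}(\natural_{x,0}(v)L,x,0)$ over the boundary is governed purely by the linear-algebraic data $\ker(\natural_{x_n,t_n})\to L$ as in Proposition \ref{prop:Conv}; the key is that near such a point $\cQ_V$ is, after the identifications of Proposition \ref{prop:Conv}, essentially the fibrewise quotient map $V\to V/\ker(\natural_{x,0})\to\gr(\cF)_x\to\gr(\cF)_x/L$, composed with translation, so it is open there by the openness of linear quotient maps together with continuity of the family $(x,t)\mapsto\ker(\natural_{x,t})$. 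Chaining the two cases through a sequence argument using the explicit description of convergence in Proposition \ref{prop:Conv} shows $\cQ_V^{-1}(\cQ_V(\mathbb{O}))$ is open.

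For (b), the existence of $\mathbb{U}$, I would work locally on $M$ and enlarge $V$ if necessary. The obstruction is only that $\cQ_V$ restricted to $M\times M\times\R_+^\times$ need not be a submersion onto $M\times M\times\R_+^\times$ — equivalently, for fixed small $t$, the map $v\mapsto\exp(\natural_t(v))\cdot x$ need not be a submersion onto a neighbourhood of $x$. But since $\natural(V)$ generates $\cX(M)$ as a $C^\infty(M,\R)$-module (property (2) of a graded basis), one can, shrinking to a coordinate chart and possibly replacing $V$ by $V\oplus V$ with the obvious doubled $\natural$ (which remains a graded basis), arrange that at each point of $M$ the differentials $d(\exp(\natural_t(\cdot))\cdot x)$ span $T_xM$; concretely, choose $v^{(1)},\dots,v^{(n)}\in V$ (with $n=\dim M$) whose images $\natural_{x,t}(v^{(i)})$ form a basis of $T_xM$, use these as the "submersion directions." The set of $(v,x,t)\in\dom(\cQ_V)$ at which $\cQ_{V|M\times M\times\R_+^\times}$ is a submersion is open, and by construction it contains $V\times\{x\}\times\{t\}$ near $t=0$; one then takes $\mathbb{U}$ to be this open submersion locus unioned with an open neighbourhood of $V\times\cG^0\times\{0\}$ chosen inside $\dom(\cQ_V)$, checking that conditions (1)--(2) hold. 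A standard partition-of-unity / chart-gluing argument patches the local constructions into a global $\mathbb{U}$.

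I expect the main obstacle to be the boundary analysis in part (a): proving $\cQ_V^{-1}(\cQ_V(\mathbb{O}))$ is open at points over $t=0$. Away from the boundary everything reduces to the classical submersion theorem, but on $\cG\times\{0\}$ one must reconcile the "smooth" picture (flows of vector fields) with the "Grassmannian" picture (the fibre being a coset of a limiting Lie subalgebra), and the gluing of the two as $t\to 0^+$. The right tool is precisely Proposition \ref{prop:Conv}, which translates convergence in $\mathbb{G}$ into the convergences $v_n\to v$, $x_n\to x$, $t_n\to 0$, $\ker(\natural_{x_n,t_n})\to L$; with this in hand the openness statement near the boundary becomes a statement about openness of a continuous family of linear quotient maps, which is manageable. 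The existence statement in (b) is essentially bookkeeping once one is willing to double $V$.
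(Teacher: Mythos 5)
Your reduction via Theorem \ref{thm:top_on_bG}.1 and your handling of preimage points over $t\neq 0$ are fine and agree with the paper. The genuine gap is the boundary step, in two respects. First, your appeal to Proposition \ref{prop:Conv} is circular relative to the paper's development: the implication (a)$\Rightarrow$(b) there (producing $v_n\to v$ with $y_n=\exp(\natural_{t_n}(v_n))\cdot x_n$ from convergence in $\mathbb{G}$) is itself deduced from Proposition \ref{rem:cQ_V_quotient}, and is essentially equivalent to the openness you are trying to prove. Second, the assertion that near a point of $\cG\times\{0\}$ the map $\cQ_V$ is ``essentially the fibrewise quotient map $V\to\gr(\cF)_x/L$ composed with translation'' is precisely what requires proof: for $t>0$ the map $v\mapsto\exp(\natural_t(v))\cdot x$ is not linear, its fibres are not affine translates of $\ker(\natural_{x,t})$, and the real task is, given $\cQ_V(v,L,x,0)=\cQ_V(v',L,x,0)$ and an open $W\ni(v,L,x,0)$, to move solutions of $y=\exp(\natural_t(w))\cdot x$ lying near $v'$ to solutions lying near $v$, uniformly as $t\to 0^+$. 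Continuity of $(x,t)\mapsto\ker(\natural_{x,t})$ alone does not do this. The paper's mechanism, which your sketch lacks, is Lemma \ref{lem:sqjdfjlq}: choose a continuous section $f$ of the tautological bundle over $\Grass(\mathfrak{g})\times M\times\R_+$ through $(\natural_{x,0}^{-1}(L),x,0)$ with value $v'^{-1}v$, so that $f(y,t)\in\ker(\natural_{y,t})$ and hence the vector field $\natural_t(f(y,t))$ vanishes at $y$ and its flow fixes $y$; then compose with the BCH-correction map $\phi$ of Theorem \ref{thm:composition_bisub} to get a continuous partially defined $\hat{\phi}$ with $\cQ_V\circ\hat{\phi}=\cQ_V$ and $\hat{\phi}(v',L,x,0)=(v,L,x,0)$ (Theorem \ref{thm:proof_different_bases} transfers this from a graded Lie basis to a general graded basis). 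Then $\hat{\phi}^{-1}(W)$ is the needed neighbourhood inside $\cQ_V^{-1}(\cQ_V(W))$. Without a substitute for $\phi$ and this fibre-transport map, your ``chaining through a sequence argument'' does not close.

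The existence part also misidentifies the obstruction. You may not replace $V$ by $V\oplus V$: the statement asserts existence of $\mathbb{U}$ for the given graded basis. No enlargement is needed anyway, since $\natural_{x,t}$ is surjective for every $t>0$, so $\cQ_V$ is already submersive wherever $\alpha_t(v)$ is small; the failure of condition (2) occurs only for $\alpha_t(v)$ far from $0$, and your claim that the submersion locus contains $V\times\{x\}\times\{t\}$ for small $t$ is false for fixed $t$ (it is not uniform over all of $V$). Consequently, taking the submersion locus union an arbitrary open neighbourhood of $V\times\cG^0\times\{0\}$ in $\dom(\cQ_V)$ will generally include points over $t>0$ where $\cQ_V$ is not submersive, so condition (2) fails for your $\mathbb{U}$. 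The paper's construction avoids all this by rescaling: with $\psi(v,x)=(\exp(\natural(v))\cdot x,x)$ submersive on a neighbourhood $U$ of $\{0\}\times M$, set $\mathbb{U}=V\times\cG^0\times\{0\}\sqcup\{(v,x,t):t>0,\ (\sum_i t^iv_i,x)\in U\}$, which is open, contains the whole $t=0$ slice, and satisfies (2).
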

\begin{ex}Consider Example \ref{ex:mainex}. Let $\mu,\lambda_0,\cdots,\lambda_{N-1}\in \R$. Consider the vector field $X=\mu\partial_x+\sum_{i=0}^{N-1}\lambda_ix^i\partial_y$. A straightforward computation shows that\begin{equation}\label{eqn:ex_flow}
 \exp(X)\cdot (x,y)=\left(x+\mu,y+\sum_{i=0}^{N-1}\lambda_i\frac{(x+\mu)^{i+1}-x^{i+1}}{(i+1)\mu}\right).
\end{equation}By Proposition \ref{prop:Conv}, a sequence $((x_n',y_n'),(x_n,y_n),t_n)\in M\times M\times \R_+^\times$ converges to $(gL,(0,y),0)\in \cG\times \{0\}$ in $\mathbb{G}$ if and only if
 \begin{itemize}
 \item $(x_n,y_n)\to (0,y)$ and $t_n\to 0$
 \item The sequence $((x_n,y_n),t_n)\in M\times \R_+^\times$ converges in $\mathbb{G}^0$ to $(L,(0,y),0)$. So either $\frac{x_n}{t_n}\to \lambda$ for some $\lambda$ in which case $L$ is the subspace given by $\lambda$ or $\frac{x_n}{t_n}\to \infty$, in which case $L=\mathrm{span}([x^i\partial_y]:0\leq i\leq N-2)$.
\item There exists a sequence $\mu_n,\lambda_0^{(n)},\cdots,\lambda_{N-1}^{(n)}$  such that  \begin{align}\label{eqn:ex_comp}
 x_n'=x_n+t_n\mu_n,\quad y_n'=y_n+\sum_{i=0}^{N-1}t_n^{N}\lambda_i^{(n)}\frac{(\frac{x_n}{t_n}+\mu_n)^{i+1}-(\frac{x_n}{t_n})^{i+1}}{(i+1)\mu_n}
\end{align}
 and the sequences $\mu_n,\lambda_{N-1}^{(n)},\cdots,\lambda_0^{(n)}$ converge to $\mu,\lambda_{N-1},\cdots,\lambda_0$ and $$\mu [\partial_x]+\sum_{i=0}^{N-1}\lambda_i[x^i\partial_y]\in gL.$$
\end{itemize}

If $\frac{x_n}{t_n}\to \lambda$, then \eqref{eqn:ex_comp} implies that 
\begin{equation}\label{eqn:ex_1_1}
 \frac{x_n'-x_n}{t_n}\to \mu,\quad \frac{y_n'-y_n}{t_n^{N}}\to \sum_{i=0}^{N-1}\lambda_i\frac{(\lambda+\mu)^{i+1}-\lambda^{i+1}}{(i+1)\mu}
\end{equation} 
If $\frac{x_n}{t_n}\to \infty$, then \eqref{eqn:ex_comp} implies that
 \begin{equation}\label{eqn:ex_1_2}
\frac{x_n'-x_n}{t_n}\to \mu,\quad \frac{y_n'-y_n}{t_nx_n^{{N-1}}}\to \lambda_{{N-1}}.
\end{equation}
One can show the following \begin{itemize}
\item If $L=\mathrm{span}\Big(\lambda[x^{i-1}\partial_y]-[x^{i}\partial_y]:1\leq i\leq N-1\Big)$, then the cosets $gL$ are in bijection with pairs of real numbers $$\left(\mu, \sum_{i=0}^{N-1}\lambda_i\frac{(\lambda-\mu)^{i+1}-\lambda^{i+1}}{(i+1)\mu}\right)$$ 
where $$\mu [\partial_x]+\sum_{i=0}^{N-1}\lambda_i[x^i\partial_y]\in gL$$ is any element. 
If $L=\mathrm{span}([x^i\partial_y]:0\leq i\leq N-2)$,  then the cosets $gL$ are in bijection with pairs of real number s$$(\mu,\lambda_{N-1})$$ where $$\mu [\partial_x]+\sum_{i=0}^{N-1}\lambda_i[x^i\partial_y]\in gL$$ is any element.
\item If \eqref{eqn:ex_1_1} or \eqref{eqn:ex_1_2} is satisfied depending on $L$ then one can find sequences $(\mu_n)_{n\in \N},(\lambda_k^{(n)})_{n\in \N},\cdots,(\lambda_0^{(n)})_{n\in \N}$ that converge to $\mu,\lambda_{N-1},\cdots,\lambda_0$ and $\mu [\partial_x]+\sum_{i=0}^{N-1}\lambda_i[x^i\partial_y]\in gL$ and \eqref{eqn:ex_comp} is satisfied.
\end{itemize}
To summarize, $x_n\to 0$, $y_n\to y$, $t_n\to 0$ and $\frac{x_n}{t_n}$ converging in $\R\cup\{\infty\}$ together with \eqref{eqn:ex_1_1} or \eqref{eqn:ex_1_2} depending on $L$ completely determines the convergence $((x_n',y_n'),(x_n,y_n),t_n)\to (gL,(0,y),0)$.
\end{ex}
\textbf{Groupoid structure.} The space $\mathbb{G}$ is a groupoid with space of objects $\mathbb{G}^0$. The source and range maps are given by\begin{align*}
&s:\mathbb{G}\to \mathbb{G}^0,\quad s(y,x,t)=(x,t),\quad s(gL,x,0)=(L,x,0)\\
&r:\mathbb{G}\to \mathbb{G}^0,\quad r(y,x,t)=(y,t),\quad r(gL,x,0)=(gLg^{-1},x,0).
\end{align*}
The identity is given by $$\mathrm{id}:\mathbb{G}^0\to \mathbb{G},\quad (x,t)\mapsto (x,x,t),\quad (L,x,0)\mapsto (L,x,0).$$
The inverse is given by $$\iota:\mathbb{G}\to \mathbb{G},\ \iota(y,x,t)=(x,y,t),\quad \iota(gL,x,0)=((gL)^{-1},x,0)=(Lg^{-1},x,0).$$
The multiplication $m:\mathbb{G}\times_{r,s} \mathbb{G}\to \mathbb{G}$ is given by \begin{align*}
 (z,y,t)\cdot (y,x,t)=(z,x,t), (hgLg^{-1},x,0)\cdot(gL,x,0)&=((hgLg^{-1})(gL),x,0)\\&=(hgL,x,0).
\end{align*}
 It is straightforward to check that the above defines a groupoid structure.
\begin{theorem}\label{thm:cont_structure_maps}
The structure maps $s,r,\mathrm{id},\iota,m$ are continuous, where for the map $m$, the domain $\mathbb{G}\times_{r,s} \mathbb{G}$ is equipped with the subspace topology from $\mathbb{G}\times\mathbb{G}$. Furthermore, the maps $s,r$ are open.
\end{theorem}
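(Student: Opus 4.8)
The plan is to reduce everything to the local description of the topology of $\mathbb{G}$ by the maps $\cQ_V$, together with one transport lemma. First, $M\times M\times\R_+^\times$ is open in $\mathbb{G}$ and carries its usual manifold topology as a subspace: for a usual-open $U$ the set $\cQ_V^{-1}(U)$ is open, because $(v,x,t)\mapsto(\exp(\natural_t(v))\cdot x,x,t)$ is continuous, so $U$ is open in $\mathbb{G}$ by Theorem~\ref{thm:top_on_bG}.1. Next fix a graded basis $(V,\natural)$ and an open set $\mathbb{U}\subseteq\dom(\cQ_V)$ as in Proposition~\ref{rem:cQ_V_quotient}; then $\cQ_{V|\mathbb{U}}\colon\mathbb{U}\to\Omega:=\cQ_{V|\mathbb{U}}(\mathbb{U})$ is a continuous open surjection, hence a quotient map, and $\Omega$ is an open neighbourhood of $\cG\times\{0\}$ (as $\natural_{x,0}$ is onto, $\cQ_V(V\times\cG^0\times\{0\})=\cG\times\{0\}$). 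Thus $\{M\times M\times\R_+^\times,\Omega\}$ is an open cover of $\mathbb{G}$, and continuity and openness of a map on $\mathbb{G}$ can be checked piecewise, where on $\Omega$ it suffices to precompose with the quotient map $\cQ_{V|\mathbb{U}}$. Now $s\circ\cQ_V$ is the projection $V\times\mathbb{G}^0\to\mathbb{G}^0$ (continuous and open), while on $M\times M\times\R_+^\times$ the map $s$ is the projection onto the open subset $M\times\R_+^\times$ of $\mathbb{G}^0$; hence $s$ is continuous and open. Likewise $\mathrm{id}=\cQ_V\circ\bigl(p\mapsto(0,p)\bigr)$ is continuous, and a direct check gives $r=s\circ\iota$; so it remains to treat $\iota$ and $m$.

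The technical heart is the following \emph{transport lemma}: if $v_n\to v$ in $V$, $x_n\to x$ in $M$, $t_n\to 0^+$ and $\ker(\natural_{x_n,t_n})\to\natural_{x,0}^{-1}(L)$ in $\Grass(V)$ for some $L\in\Grass(\cF)_x$, then $y_n:=\exp(\natural_{t_n}(v_n))\cdot x_n$ is well defined for large $n$ (since $\natural_{t_n}(v_n)\to 0$ in $C^\infty$ uniformly on compacts), $y_n\to x$, and $\ker(\natural_{y_n,t_n})\to\natural_{x,0}^{-1}\bigl(\natural_{x,0}(v)\,L\,\natural_{x,0}(v)^{-1}\bigr)$; there is also a version at $t=0$, amounting to continuity of conjugation in $\gr(\cF)_\bullet$ as the basepoint varies. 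I would prove it by pulling $\natural_{t_n}$ back along the time-one flow $\psi_n$ of $\natural_{t_n}(v_n)$: using $\natural_{t_n}(u)=\sum_i t_n^i\,\natural(u_i)$, the bracket condition $[\cF^i,\cF^j]\subseteq\cF^{i+j}$, and the Lie series for the pullback of vector fields along $\psi_n$, and keeping track of filtration degrees, one finds that $\ker\bigl(\natural_{y_n,t_n}\bigr)$ equals $B_n^{-1}\ker(\natural_{x_n,t_n})$ up to an error that vanishes in the limit, where $B_n\in\GL(V)$ descends through $\natural_{x_n,0}$ to conjugation by $\natural_{x_n,0}(v_n)$ on $\gr(\cF)_{x_n}$; passing to the limit gives the claim. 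The same expansion applied to the composite flow yields the \emph{multiplicative} version: since $\natural_{x_n,t_n}$ is onto, there is $u_n\in V$ with $\exp(\natural_{t_n}(u_n))\cdot x_n=\exp(\natural_{t_n}(w_n))\exp(\natural_{t_n}(v_n))\cdot x_n$, and if $w_n\to w$, $v_n\to v$ then $u_n$ can be chosen converging to $u$ with $\natural_{x,0}(u)=\natural_{x,0}(w)\cdot\natural_{x,0}(v)$, the product being in $\gr(\cF)_x$ via the Baker--Campbell--Hausdorff formula~\eqref{eqn:Camp}.

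Granting these, $\iota$ is the smooth swap on $M\times M\times\R_+^\times$, and near $\cG\times\{0\}$ a direct computation gives $\iota\circ\cQ_V=\cQ_V\circ\bigl((-\mathrm{id}_V)\times(r\circ\cQ_V)\bigr)$, where $r\circ\cQ_V$ sends $(v,x,t)\mapsto(\exp(\natural_t(v))\cdot x,t)$ and $(v,L,x,0)\mapsto(\natural_{x,0}(v)L\natural_{x,0}(v)^{-1},x,0)$. The transport lemma is exactly the statement that $r\circ\cQ_V$ is continuous on $\mathbb{U}$; the image of $(-\mathrm{id}_V)\times(r\circ\cQ_V)$ lies in $\dom(\cQ_V)$ by Theorem~\ref{thm:prop of blowup}.2; hence $\iota$ is continuous, and being an involution it is a homeomorphism, so $r=s\circ\iota$ is continuous and open. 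For $m$ I would argue with sequences (legitimate since $\mathbb{G}$ is metrizable, Theorem~\ref{thm:top_on_bG}.2) via the criterion of Proposition~\ref{prop:Conv}, after passing to a subsequence lying entirely in $M\times M\times\R_+^\times$ (classical) or entirely in $\cG\times\{0\}$: for a composable sequence converging into $\cG\times\{0\}$, Proposition~\ref{prop:Conv}.1 lets one write the two factors as $y_n=\exp(\natural_{t_n}(v_n))\cdot x_n$ and $z_n=\exp(\natural_{t_n}(w_n))\cdot y_n$ with $v_n\to v$, $w_n\to w$ (the basepoint matching of the two factors being supplied by the transport lemma), so $z_n=\exp(\natural_{t_n}(w_n))\exp(\natural_{t_n}(v_n))\cdot x_n$; the multiplicative version then produces $u_n\to u$ with $z_n=\exp(\natural_{t_n}(u_n))\cdot x_n$ and $\natural_{x,0}(u)$ in the product coset, which is precisely what Proposition~\ref{prop:Conv}.1 requires for the sequence $(z_n,x_n,t_n)$ --- the image under $m$ of the composable sequence --- to converge to the image under $m$ of the limit pair. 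Sequences lying entirely in $\cG\times\{0\}$ are handled by the $t=0$ specialisations.

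The main obstacle is the transport lemma and its multiplicative companion: the assertion that, to leading order as $t\to 0$, conjugating and composing the (non-integrable) flows of the vector fields $\natural_t(v)$ are governed by the nilpotent Lie group $\gr(\cF)_x$ through the adjoint action and the BCH product. This is the content that makes $\mathbb{G}$ a groupoid at all, and it is the only genuinely analytic step; its proof requires a careful expansion of the flows that keeps track of the filtration degrees $\cF^i$ and of the degree beyond which $\natural$ ceases to be a Lie homomorphism. Everything else --- the open cover, the quotient maps $\cQ_{V|\mathbb{U}}$, the open projections and the involution $\iota$ --- is formal point-set topology.
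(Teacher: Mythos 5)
Your overall architecture is the paper's: reduce everything to the charts $\cQ_V$ via Theorem \ref{thm:top_on_bG}.1 and Proposition \ref{rem:cQ_V_quotient}, dispose of $s$ and $\mathrm{id}$ by the projection and zero-section observations, and reduce $r$, $\iota$, $m$ to two facts --- that conjugating $\ker(\natural_{x_n,t_n})$ by the flow of $\natural_{t_n}(v_n)$ is governed in the limit by the adjoint action of $\gr(\cF)_x$, and that composing two such flows is governed by the BCH product. In the paper these are exactly Lemma \ref{lem:qjsidfj} (established in the proof of Theorem \ref{thm:prop of blowup}) and Theorem \ref{thm:composition_bisub} (imported from \cite[Appendix A]{MohsenMaxHypo}); the paper proves continuity of $r$ directly from Lemma \ref{lem:qjsidfj} and then deduces $\iota$, whereas you do $\iota$ first and set $r=s\circ\iota$ --- a cosmetic difference. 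For $m$ the paper works at chart level: it identifies the relevant fibred product with $\mathfrak{g}\times\mathfrak{g}\times\mathbb{G}^0$ and factors $m\circ(\cQ_{\mathfrak{g}}\times\cQ_{\mathfrak{g}})$ through the continuous map $\hat{\phi}$ of \eqref{eqn:m_product}, which needs no metrizability. Your sequence argument via Proposition \ref{prop:Conv} is a legitimate alternative, but mind the ordering: Proposition \ref{prop:Conv} and Theorem \ref{thm:top_on_bG}.2 are proved in the paper using only Proposition \ref{rem:cQ_V_quotient} and the continuity of $s$, so your appeal to them after establishing $s$ is not circular --- but this dependency should be stated, not assumed.

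The one genuine weak point is your standalone sketch of the transport lemma. The statement is true (it is Lemma \ref{lem:qjsidfj}), but the claim that $\ker(\natural_{y_n,t_n})$ equals $B_n^{-1}\ker(\natural_{x_n,t_n})$ ``up to an error that vanishes in the limit'' does not by itself finish the argument: the maps $\natural_{x_n,t_n}$ degenerate as $t_n\to 0$, so an error merely tending to $0$ need not preserve the limit of the kernels in $\Grass(V)$; one needs the error to be small in each graded degree relative to the scale $t_n^i$, which is precisely the quantitative content of the BCH-for-vector-fields estimates behind Theorem \ref{thm:composition_bisub}. The paper sidesteps estimating kernels directly: from $v_n\in\ker(\natural_{x_n,t_n})$ it manufactures, using the smooth correction map $\phi$ of Theorem \ref{thm:composition_bisub}, elements whose flows fix $y_n$; the periodic bounding lemma (Theorem \ref{thm:periodicbounding}) then converts ``the flow fixes $y_n$'' into membership in $\ker(\natural_{y_n,t_n})$, and Proposition \ref{prop:conv_Grass} together with a dimension count yields the conjugated limit. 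Your flow-pullback expansion can presumably be pushed through, but as written it is a plan rather than a proof; if you do not wish to redo that analysis, invoke Theorem \ref{thm:composition_bisub} and the periodic bounding lemma as above for the transport lemma, and use the convergence $\phi(w_n,v_n,x_n,t_n)\to w\cdot v$ from Theorem \ref{thm:composition_bisub}.3 for the ``multiplicative companion'' in your argument for $m$.
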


\section{Proofs of the results in Sections \ref{sec:tangent_cones} and \ref{sec:tangent_groupoid}}\label{sec:proofs}
\subsection{Preliminary theorems}\label{subsubsec1}
\begin{dfn}A \textit{graded Lie basis} is a graded basis $(\mathfrak{g},\natural)$ where $\mathfrak{g}=\oplus_{i=1}^N\mathfrak{g}^i$ is equipped with the structure of a Lie bracket such that \begin{align*}
[\mathfrak{g}^i,\mathfrak{g}^j]\subseteq \begin{cases} \mathfrak{g}^{i+j},&i+
j\leq N\\0,&i
+j> N\end{cases}
\end{align*}
and such that \begin{equation}\label{eqn:bracket_cond_graded_lie_basis}
 \natural([v,w])=[\natural(v),\natural(w)],\quad v\in \mathfrak{g}^i,w\in \mathfrak{g}^j,i+j\leq N.
\end{equation}
\end{dfn}
Graded Lie basis always exist. For example, one can start with a graded basis $(V,\natural)$, then define $\mathfrak{g}$ as the graded free nilpotent Lie algebra of depth $N$ generated by elements of $V$ with the same grading as that of $V$. One then extends $\natural$ from $V$ to $\mathfrak{g}$ by using \eqref{eqn:bracket_cond_graded_lie_basis}. 
 
 In \cite[Appendix A]{MohsenMaxHypo}, we proved the following theorem. We remark that Appendix A can be read immediately and does not require any background from the rest of the article \cite{MohsenMaxHypo}. 
 \begin{theorem}[{\cite[Section A.2]{MohsenMaxHypo}}]\label{thm:composition_bisub}
 
 There exists $$\phi:\dom(\phi)\subseteq  \mathfrak{g}\times \mathfrak{g}\times M\times \R_+\to \mathfrak{g}$$ defined on some open set $\dom(\phi)$ 
such that \begin{enumerate}
\item $\mathfrak{g}\times \mathfrak{g}\times M\times \{0\}$ and $\{0\}\times \{0\}\times M\times \R_+$ are subsets of $\dom(\phi)$.
\item If $(v,w,x,t)\in \dom(\phi)$, then $\exp(\natural_t(\phi(v,w,x,t))\cdot x$ and $\exp\left(\natural_t(v)\right)\cdot\Big(\exp(	\natural_t(w))\cdot x\Big)$ are well-defined and $$\exp(\natural_t(\phi(v,w,x,t))\cdot x=\exp\left(\natural_t(v)\right)\cdot\Big(\exp(	\natural_t(w))\cdot x\Big).$$
\item For any  $(v,w,x,0)\in \mathfrak{g}\times \mathfrak{g}\times M\times \{0\}$, $\phi(v,w,x,0)=v\cdot w$ where $v\cdot w$ means the product of $v$ and $w$ in $\mathfrak{g}$ using the BCH formula.
\end{enumerate}
 \end{theorem}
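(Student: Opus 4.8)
The plan is to reduce the statement to a $t$-independent equation via the grading dilation, solve that equation near the zero section using a submersion, and then analyse the behaviour of the solution as $t\to 0^+$. The starting observation is that $\natural_t=\natural\circ\alpha_t$, where $\alpha_t$ is the dilation of the graded Lie algebra $\mathfrak{g}$ (an automorphism of $\mathfrak{g}$ for $t\neq 0$, multiplying $\mathfrak{g}^i$ by $t^i$). Hence $\exp(\natural_t(v))\cdot x=\exp(\natural(\alpha_t v))\cdot x$, and, writing $\tilde v=\alpha_t v$, $\tilde w=\alpha_t w$, $\tilde u=\alpha_t u$, the identity we must arrange becomes the $t$-free equation $\exp(\natural(\tilde u))\cdot x=\exp(\natural(\tilde v))\cdot(\exp(\natural(\tilde w))\cdot x)$. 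So I would first build a map $\tilde\phi(\tilde v,\tilde w,x)$ solving this for $(\tilde v,\tilde w)$ near $0$, then set $\phi(v,w,x,t)=\alpha_{1/t}(\tilde\phi(\alpha_t v,\alpha_t w,x))$ for $t\neq 0$ and $\phi(v,w,x,0)=v\cdot w$, and finally check that the resulting $\phi$ is smooth on an open set containing the two required slices.

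For the first step, note that $\Psi\colon(\tilde u,x)\mapsto(\exp(\natural(\tilde u))\cdot x,x)$ is a submersion near $\{\tilde u=0\}$: its differential in $\tilde u$ at $\tilde u=0$ sends $\dot{\tilde u}$ to $\natural(\dot{\tilde u})(x)$, whose image is the span of all $\natural(v)(x)$, and this is $T_xM$ because $\natural(\mathfrak{g})$ contains $X_1,\dots,X_d$ together with all their iterated brackets of length $\leq N$, so Hörmander's condition applies. Consequently, for $(\tilde v,\tilde w)$ near $0$ the point $\exp(\natural(\tilde v))\exp(\natural(\tilde w))\cdot x$ lies near $x$ and in the image of $\Psi(\cdot,x)$, and a smooth local section of $\Psi$ vanishing at $\tilde v=\tilde w=0$ (which exists by the submersion theorem and can be taken to depend smoothly on $x$) provides $\tilde\phi$. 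With $\phi$ defined as above, condition (2) holds for $t\neq 0$ by construction and is vacuous at $t=0$ since $\natural_0=0$, while $\{0\}\times\{0\}\times M\times\R_+\subseteq\dom(\phi)$ is immediate because both sides of the flow equation equal $x$ there.

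The real work, and the step I expect to be the main obstacle, is proving that $\alpha_{1/t}(\tilde\phi(\alpha_t v,\alpha_t w,x))\to v\cdot w$ as $t\to 0^+$, with enough regularity to make $\phi$ smooth across $t=0$. The idea is to compare $\exp(\natural(\tilde v))\exp(\natural(\tilde w))\cdot x$ with $\exp(\natural(\tilde v\cdot\tilde w))\cdot x$. The $\BCH$ expansion of the two vector fields $\natural(\tilde v),\natural(\tilde w)$ agrees with $\natural$ applied to the $\BCH$ product $\tilde v\cdot\tilde w$ in $\mathfrak{g}$ on every iterated bracket of components of $\tilde v,\tilde w$ of total degree $\leq N$ --- this is precisely \eqref{eqn:bracket_cond_graded_lie_basis} together with the nilpotency of $\mathfrak{g}$ --- so the discrepancy involves only iterated brackets of total degree $>N$. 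Substituting $\tilde v=\alpha_t v$, $\tilde w=\alpha_t w$, each such bracket carries a factor $t^k$ with $k>N$, whereas $\alpha_{1/t}$ multiplies the $\mathfrak{g}^i$-component by $t^{-i}$ with $i\leq N$, so the discrepancy still vanishes in the limit; meanwhile the $\mathfrak{g}^i$-component of $\tilde v\cdot\tilde w$ is homogeneous of degree $i$ with respect to the dilation, hence rescales to $(v\cdot w)_i$. A finite-order, weighted Taylor argument makes this precise, so no divergent series are manipulated. The genuinely delicate point is that $\tilde\phi$ is not unique: the defect $\tilde\phi-\tilde v\cdot\tilde w$ may also have a component in $\ker(\natural_x)$, the vertical directions of $\Psi$, which need not be of high degree. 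One shows, by linearising the flow equation, that the low-degree part of this component is forced to lie in $\ker(d\Psi)$, and hence can be removed by adjusting the section by a smooth vertical perturbation with prescribed weighted $N$-jet at the origin; after this adjustment the defect has degree $>N$ and the limit is exactly $v\cdot w$. Smoothness of $\phi$ across $t=0$ then follows either by dividing the rescaled flow equation by appropriate powers of $t$ and invoking the implicit function theorem, or by a Borel-type argument: take $\phi$ to have the formal power series in $t$ as asymptotic expansion and correct it by a function flat at $t=0$ using the submersion $\Psi$ on $\{t\neq 0\}$.

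Alternatively --- and this is the route closest to the literature --- one can recognise the statement as the composition of two path-holonomy bisubmersions in the sense of Androulidakis--Skandalis and invoke their principle that such a composition receives, near its identity bisection, a submersion from a single path-holonomy bisubmersion; the existence of that submersion rests on a Nakayama-type argument using that the modules $\cF^i$ are finitely generated, and tracking the dilation $\alpha_t$ through it again produces $\phi$ with $\phi(v,w,x,0)=v\cdot w$.
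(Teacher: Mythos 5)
Your overall strategy is the same as the cited proof: rescale by the dilations so that the flow equation becomes $t$-independent, solve it near the zero section using that $(\tilde u,x)\mapsto(\exp(\natural(\tilde u))\cdot x,x)$ is a submersion, and control the rescaled defect with the Baker--Campbell--Hausdorff formula for flows, which by \eqref{eqn:bracket_cond_graded_lie_basis} only produces errors of weighted order $>N$. The gap is exactly at the step you flag as delicate, and your proposed repair does not close it. Any $\tilde\phi$ obtained from a section $\sigma(z,x)$ of $\Psi$ factors through the endpoint pair $(z,x)=\bigl(\exp(\natural(\tilde v))\exp(\natural(\tilde w))\cdot x,\,x\bigr)$, and no such map can satisfy $\alpha_{1/t}\bigl(\tilde\phi(\alpha_t v,\alpha_t w,x)\bigr)\to v\cdot w$. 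Take $w=0$ and a homogeneous $v\in\mathfrak{g}^i$, $v\neq 0$, with $\natural(v)(x)=0$ (in the paper's Grushin example, the degree-one generator mapped to $x^{N-1}\partial_y$ at a point of the line $a=0$): then $\natural_t(v)=t^i\natural(v)$ vanishes at $x$, so $z=x$ for every $t$, hence $\tilde\phi(\alpha_t v,0,x)=\sigma(x,x)$ is independent of $t$ and of $v$, and $\alpha_{1/t}(\sigma(x,x))$ either diverges or tends to $0$; it cannot tend to $v\neq0$, let alone do so for all such $v$ simultaneously. For the same reason your fix --- a vertical perturbation of the section with prescribed weighted $N$-jet at the origin --- is impossible within the class of maps depending only on $(z,x)$: the required limiting value genuinely depends on $(v,w)$, not merely on the endpoints, so no jet condition on a section in $(z,x)$ can deliver it. Your alternative paragraph (composition of path-holonomy bisubmersions) names the right lineage but simply asserts the $t\to0^+$ behaviour, which is where the content lies.

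What the actual proof does differently, and what your argument is missing, is to let the solving map depend on a \emph{guide}: one constructs $k(v,z,y)$ with $\exp(\natural(k(v,z,y)))\cdot y=z$ and, crucially, $k(v,\exp(\natural(v))\cdot y,y)=v$ exactly, by extending the tautological section defined on the submanifold $D=\{(v,\exp(\natural(v))\cdot y,y)\}$ of $\mathfrak{g}\times M\times M$, using the local normal forms of submersions and of submanifolds. One then sets, for $t\neq 0$,
\begin{equation*}
\phi(v,w,y,t)=\alpha_{t^{-1}}\Bigl(k\bigl(\alpha_t(v\cdot w),\,\exp(\natural_t(v))\cdot(\exp(\natural_t(w))\cdot y),\,y\bigr)\Bigr),
\end{equation*}
so that the flow identity of part 2 holds by construction, and the exactness of $k$ on $D$ plus its Lipschitz continuity in the middle variable reduce the limit to the estimate $d\bigl(\exp(\natural_t(v))\exp(\natural_t(w))\cdot y,\ \exp(\natural_t(v\cdot w))\cdot y\bigr)=o(t^N)$, which is your BCH step; this gives $\phi\to v\cdot w$, and smoothness across $t=0$ follows because $\alpha_t\circ\phi$ is smooth and continuity of $\phi$ forces the low-order terms of its Taylor expansion in $t$ to vanish, so one may divide by the relevant powers of $t$. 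Your two suggestions for smoothness (implicit function theorem after dividing by powers of $t$, or a Borel-type flat correction, which would only satisfy the flow equation asymptotically) are not worked out, but the essential missing idea is the guide-dependent section; once you have it, the rest of your outline goes through.
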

 Let us explain the motivation behind Theorem \ref{thm:composition_bisub}. If $\natural:\mathfrak{g}\to \cX(M)$ is a Lie algebra homomorphism, then the space $\mathfrak{g}$, seen as a Lie group, acts (on the left by our convention on Lie algebras being right-invariant vector fields) on $M$.
  In particular, $$\exp(\natural_t(v))\cdot(\exp(\natural_t(w))\cdot x)=\exp(\natural_t(v\cdot w))\cdot x.$$
  Hence, one can take $\phi(v,w,x,t)=v\cdot w$. Therefore, in the general case, the map $\phi$ can be thought of as a correction to the absence of an action of $\mathfrak{g}$ on $M$.
 \begin{theorem}[{\cite[Section A.3]{MohsenMaxHypo}}]\label{thm:proof_different_bases} Let $(V,\natural)$ and $(V',\natural')$  be graded basis. Then there exists a smooth map $$\phi:\dom(\phi)\subseteq V\times M\times \R_+\to V'$$ defined on some open subset $\dom(\phi)$ such that 
\begin{enumerate}
\item $V\times M\times \{0\}$ and $\{0\}\times M\times \R_+$ are subsets of $\dom(\phi)$.
\item If $(v,x,t)\in \dom(\phi)$, then $\exp(\natural_t(v))\cdot x$ and $\exp(\natural_t'(\phi(v,x,t)))\cdot x$ are well-defined and $$\exp(\natural_t(v))\cdot x=\exp(\natural_t'(\phi(v,x,t)))\cdot x.$$
\item For any $(v,x,0)\in V\times M\times \{0\}$, then $\natural_{x,0}(v)=\natural'_{x,0}(\phi(v,x,0))$.
\end{enumerate}
\end{theorem}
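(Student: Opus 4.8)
The plan is to solve, for each fixed $(v,x)$ and $t$ near $0$, the ``flow equation'' $\exp(\natural_t(v))\cdot x=\exp(\natural'_t(w))\cdot x$, call it $(\star)$, for $w=\phi(v,x,t)\in V'$, with the value at $t=0$ pinned down by condition~(3). I would first reduce to a normal form. If $\phi_{12}$ is a map as in the statement for a pair of graded bases $(V,\natural),(V_2,\natural_2)$ and $\phi_{23}$ one for $(V_2,\natural_2),(V_3,\natural_3)$, then $(v,x,t)\mapsto\phi_{23}(\phi_{12}(v,x,t),x,t)$ is one for $(V,\natural),(V_3,\natural_3)$, with open domain still containing $V\times M\times\{0\}$ and $\{0\}\times M\times\R_+$, provided each factor is normalised so that $\phi(0,x,t)=0$ (a legitimate value, since $w=0$ solves $(\star)$ when $v=0$). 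Moreover, for any graded basis $(V,\natural)$ the graded inclusion $V\hookrightarrow V\oplus V'$ intertwines $\natural_t$ with $(\natural\oplus\natural')_t$, so $(v,x,t)\mapsto v$ does the job for $(V,\natural),(V\oplus V',\natural\oplus\natural')$, and likewise for $(V',\natural')$. Hence it suffices to treat the case of a graded basis $(\widetilde V,\widetilde\natural)$ together with a graded \emph{sub}-basis $(U,\natural_U)$ of it — so $\widetilde V=U\oplus W$ as graded spaces, $\widetilde\natural|_U=\natural_U$, and $\natural_U$ of the degree-$\le k$ part of $U$ still generates $\cF^k$ — and then to apply this with $(\widetilde V,\widetilde\natural)=(V\oplus V',\natural\oplus\natural')$, $(U,\natural_U)=(V',\natural')$ and compose.

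\textbf{Stage 1: the value at $t=0$.} For $v=v_0+\sum_l c_l f_l$ with $v_0\in U$ and $f_l$ a basis of $W$, expand $\widetilde\natural(f_l)\in\cF^{\deg f_l}$ in fixed module generators $\natural_U(e_j)$ of $\cF^{\deg f_l}$ with $e_j$ of degree $\le\deg f_l$; collecting powers of $t$, the $W$-part of $\widetilde\natural_t(v)$ becomes $\sum_j t^{\deg e_j}\,\widehat h_{j,t}(\cdot)\,\natural_U(e_j)$ with $\widehat h_{j,t}$ smooth in $(x,t)$ and $\R$-linear in $v$. Setting $\psi_1(v,x,t):=v_0+\sum_j\widehat h_{j,t}(x)\,e_j\in U$ one gets a smooth map, linear in $v$, such that the vector fields $\widetilde\natural_t(v)$ and $\natural_{U,t}(\psi_1(v,x,t))$ \emph{agree at the point $x$} for every $t$, and such that $\widetilde\natural_{x,0}(v)=\natural_{U,x,0}(\psi_1(v,x,0))$ (the generators $e_j$ of degree $<\deg f_l$ die in $\gr(\cF)_x$). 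So $\psi_1(\cdot,x,0)$ is an admissible value for $\phi(\cdot,x,0)$ and already realises condition~(3).

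\textbf{Stage 2: correcting $\psi_1$ to an exact solution of $(\star)$.} The error field $\widetilde\natural_t(v)-\natural_{U,t}(\psi_1(v,x,t))$ vanishes at $x$ and, being of the form $\sum_j t^{\deg e_j}(\widehat h_{j,t}(\cdot)-\widehat h_{j,t}(x))\natural_U(e_j)$, is of order $t^{\deg e_j+1}$ along the (distance-$O(t)$) flow lines in question. Thus $\exp(\widetilde\natural_t(v))\cdot x$ and $\exp(\natural_{U,t}(\psi_1))\cdot x$ differ by a perturbation which, after conjugating both flow maps by the weighted dilation at $x$ and choosing a basis of $U$ adapted to $x$ as in the proof of Proposition~\ref{prop:dist_point} (so that $\ker\natural_{U,x,t}$ is $t$-independent and each graded block of $(\star)$ divided by the matching power of $t$ stays finite as $t\to0$), becomes an honest $C^\infty$ perturbation vanishing at $t=0$. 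At $t=0$ the renormalised equation $(\star)$ is the exponential equation in the osculating group $\gr(\cF)_x$, whose $w$-derivative at $w=\psi_1(v,x,0)$ is surjective onto $T_xM$ by Hörmander's condition (openness of accessible sets in $\gr(\cF)_x$, cf.~\eqref{eqn:ev_map}). The implicit function theorem then provides, near each $(v,x,0)$, a smooth solution $\phi$ of $(\star)$ agreeing with $\psi_1$ at $t=0$, and one patches these local solutions, together with their $v=0$ specialisation $\phi(0,x,t)=0$, into a single $\phi$ on a neighbourhood of $(\widetilde V\times M\times\{0\})\cup(\{0\}\times M\times\R_+)$.

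\textbf{Main obstacle.} The substance is entirely in Stage 2. First, one must establish from scratch that the flow maps $(v,x,t)\mapsto\exp(\natural_t(v))\cdot x$, renormalised by the weighted dilation, extend smoothly across $t=0$ with value depending on $v$ only through $\natural_{x,0}(v)$ — this is the sub-Riemannian ``approximation by nilpotent groups'' input and cannot be borrowed from later sections. Second, globalising the implicit-function output is delicate: the $t=0$ solution set of $(\star)$ for $w$ is exactly $\natural_{U,x,0}^{-1}(\widetilde\natural_{x,0}(v))$, an affine subspace of $U$ whose dimension $\dim U-\dim\gr(\cF)_x$ jumps with $x$ (as in Example~\ref{ex:mainex}); this is precisely why $\phi$ at $t=0$ cannot be fixed by a smooth complement of $\ker\natural_{U,x,0}$ and has to be prescribed explicitly through Stage 1 before being extended, and why a naive gluing of the local implicit-function solutions is not automatic.
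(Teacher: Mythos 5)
Your Stage~0 reduction (passing through $V\oplus V'$ and a graded sub-basis) and your Stage~1 are sound; Stage~1 is essentially the coefficient-freezing computation the paper itself uses in the proofs of Proposition~\ref{thm:top_well_defined}.1 and Proposition~\ref{prop:dist_point}. The genuine gap is that Stage~2, which carries the entire content of the theorem, is only announced. First, the assertion that the dilation-renormalised equation $\exp(\natural_t(v))\cdot x=\exp(\natural'_t(w))\cdot x$ extends to a map that is $C^\infty$ jointly in $(w,v,x,t)$ up to $t=0$ is exactly the nilpotent-approximation input that the paper delegates to \cite[Appendix A]{MohsenMaxHypo}; you state that it must be established from scratch but do not establish it, and your order count does not close: an error field of Euclidean size $O(t^{k+1})$ along the flow yields an endpoint discrepancy which, after applying $\alpha_{t^{-1}}$ in coordinates whose weights go up to $N$, is a priori only $O(t^{k+1-N})$, and even a correct $C^0$ bound would not by itself give smoothness of the rescaled family at $t=0$. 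Second, the adapted basis you invoke from Proposition~\ref{prop:dist_point} is attached to a single point and does not vary even continuously in $x$ (this is precisely the discontinuity of \eqref{eqn:mapxtorx}), so near a point where the ranks of $\ev_x(\cF^i)$ jump — the only interesting case — your renormalised equation is not visibly continuous in the parameter $x$, and the implicit function theorem then gives smoothness in $(v,t)$ for fixed $x$ only, not a map defined and smooth on an open subset of $V\times M\times\R_+$. Incidentally, the $t=0$ solution set is not $\natural_{U,x,0}^{-1}(\widetilde\natural_{x,0}(v))$: by Propositions~\ref{prop:Conv} and~\ref{prop:dist_point} it is the preimage of the coset $\natural_{x,0}(v)\,\mathfrak{r}_x$, of constant codimension $\dim M$; the real difficulty is the discontinuity of $x\mapsto\mathfrak{r}_x$, not a fibre-dimension jump.

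Third, even granting the local implicit-function solutions, the patching step is missing and is not routine: for $t>0$ the solution set in $w$ is not an affine subspace of $V'$, so averaging local branches by a partition of unity does not produce solutions, and no canonical branch is available because a complement of the relevant kernels cannot be chosen continuously in $x$. You flag both this and the missing approximation input as obstacles, which is honest, but it leaves the proposal a programme rather than a proof. For comparison, the paper does not reprove the statement at all — it quotes \cite[Section A.3]{MohsenMaxHypo} — and the construction there (in the same spirit as the companion Theorem~\ref{thm:composition_bisub}) has a different shape: a single smooth section of the time-one flow map, anchored along a submanifold of tautological solutions, is conjugated by the dilations, and smoothness across $t=0$ comes from a BCH-type $o(t^N)$ estimate together with the observation that a continuous map whose composition with $\alpha_t$ is smooth is itself smooth; that one-chart construction avoids both the pointwise privileged coordinates and the gluing problem on which your plan currently founders.
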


 We need the periodic bounding lemma \cite{period2}, see \cite[Proposition 1.1]{Debord2013} for a proof of the version we use.
 \begin{theorem}[Periodic bounding lemma]\label{thm:periodicbounding} Let $(V,\natural)$ be a graded basis, $\norm{\cdot}$ a norm on $V$, $K\subseteq M$ a compact subset. Then there exists $\epsilon>0$, such that for any $x\in K$, $v\in V$ such that $\exp(\natural(v))\cdot x=x$, either $\natural(v)(x)=0$ or $\norm{v}\geq \epsilon$.
 \end{theorem}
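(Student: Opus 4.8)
The plan is to reduce, by a rescaling trick, to the periodic bounding lemma for a \emph{single} smooth vector field, which is the form proved in \cite[Proposition 1.1]{Debord2013}: for a smooth vector field $X$ on a manifold $P$ and a compact set $K'\subseteq P$ there is $\delta>0$ such that, whenever $x\in K'$ and $0<|t|\le \delta$ are such that the time-$t$ flow of $X$ is defined at $x$ and fixes $x$, one has $X(x)=0$.

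I argue by contradiction. Suppose there are $x_n\in K$ and $v_n\in V$ with $\exp(\natural(v_n))\cdot x_n=x_n$, $\natural(v_n)(x_n)\neq 0$, and $\|v_n\|\to 0$; then each $v_n\neq 0$, since $\natural$ is linear. All norms on the finite-dimensional space $V$ being equivalent, a uniform lower bound for one of them gives one for any other, so I may assume $\|\cdot\|$ is Euclidean, so that the unit sphere $S:=\{w\in V:\|w\|=1\}$ is a smooth compact submanifold of $V$. Put $t_n:=\|v_n\|>0$ and $w_n:=v_n/t_n\in S$. By linearity $\natural(v_n)=t_n\natural(w_n)$, and the time-$1$ flow of $t_n\natural(w_n)$ equals the time-$t_n$ flow of $\natural(w_n)$; hence the time-$t_n$ flow of $\natural(w_n)$ fixes $x_n$, while $\natural(w_n)(x_n)=t_n^{-1}\natural(v_n)(x_n)\neq 0$. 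This is the one genuinely new step: a hypothesis about $v_n$ of small \emph{norm} has become a hypothesis about a flow of small \emph{time}.

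Now I assemble $\{\natural(w)\}_{w\in S}$ into the single smooth vector field $\widetilde X$ on $M\times S$ defined by $\widetilde X(y,w)=(\natural(w)(y),0)$, whose time-$s$ flow is $(y,w)\mapsto(\exp(s\natural(w))\cdot y,w)$. By the previous step the time-$t_n$ flow of $\widetilde X$ is defined at $(x_n,w_n)\in K\times S$ and fixes it, and $\widetilde X(x_n,w_n)=(\natural(w_n)(x_n),0)\neq 0$. Applying the periodic bounding lemma to $\widetilde X$ and the compact set $K\times S$ gives $\delta>0$ as above; but for $n$ large we have $0<t_n\le\delta$, so the lemma forces $\widetilde X(x_n,w_n)=0$, a contradiction.

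The only serious ingredient is the single-vector-field periodic bounding lemma, taken from \cite{period2,Debord2013}; everything else is the rescaling onto $S$ together with the routine verification that $\widetilde X$ is a well-defined smooth vector field on a manifold, the only minor point being the replacement of $\|\cdot\|$ by an equivalent smooth norm. One could equally well cite the smooth-family version of the lemma directly and dispense with the passage to $M\times S$.
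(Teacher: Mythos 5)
Your proof is correct, but note that the paper does not actually prove this lemma at all: it is quoted from the literature, with \cite{period2} (the classical single-field period bound) and \cite[Proposition 1.1]{Debord2013} cited for ``the version we use'', and Debord's statement is already the parametrized one for a family $v\mapsto\sum_i v_iX_i$ with $\norm{v}$ small. What you have done, therefore, is supply the derivation that the citation hides: you reduce the family statement to the single-vector-field period-bounding lemma by the rescaling $v_n=t_nw_n$ with $\norm{w_n}=1$ (converting ``small norm'' into ``small time'') and by suspending the compact family $\{\natural(w)\}_{w\in S}$ into the single vector field $\widetilde X(y,w)=(\natural(w)(y),0)$ on $M\times S$, whose time-$s$ flow is $(y,w)\mapsto(\exp(s\natural(w))\cdot y,w)$; applying the single-field lemma on the compact set $K\times S$ then yields the contradiction. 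All steps check out: $v_n\neq 0$ by linearity of $\natural$, the time-$1$ flow of $t_n\natural(w_n)$ is the time-$t_n$ flow of $\natural(w_n)$ with the same domain of definition, $\widetilde X$ is smooth since $\natural$ is linear on a finite-dimensional space, and its flow preserves the $S$-coordinate. What your route buys is self-containedness relative to the more classical single-field result (Yorke's bound), at the cost of quietly relying on the manifold/compact-set formulation of that result, which itself needs the routine localization (short orbits starting in a compact set stay in a fixed compact neighbourhood, where a Lipschitz constant is available); you, like the paper, delegate that to the citation, which is a reasonable division of labour. Citing the family version of \cite[Proposition 1.1]{Debord2013} directly, as the paper does, makes your suspension argument unnecessary but also less instructive.
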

The following simple proposition about Grassmannian manifolds will be used repeatedly.
\begin{prop}\label{prop:conv_Grass} Let $V$ be a vector space, $0\leq d\leq \dim(V)$, $\Grass_d(V)$ the Grassmannian manifold of subspaces of dimension $d$, $(L_n)_{n\in \N}\in \Grass_d(V)$ a sequence, $L\in \Grass_d(V)$. The following are equivalent \begin{enumerate}
\item $L_n$ converges to $L$ in $\Grass_d(V)$
\item For any sequence $v_n\in L_n$ if $v_n$ converges in $V$, then the limit is in $L$.
\end{enumerate}Furthermore, if $L_n\to L$, then for any $v\in L$, there exists $v_n\in L_n$ such that $v_n\to v$.
\end{prop}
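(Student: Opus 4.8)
The plan is to reduce everything to a concrete coordinate description of the Grassmannian near the point $L$. First I would fix a complement $W$ of $L$ in $V$, so $V = L \oplus W$, and recall that the set $\mathcal{U}_W := \{L' \in \Grass_d(V) : L' \cap W = 0\}$ is an open neighbourhood of $L$ which is diffeomorphic to $\Hom(L, W)$ via $T \mapsto \graph(T) = \{u + Tu : u \in L\}$, with $L$ itself corresponding to $T = 0$. Convergence $L_n \to L$ in $\Grass_d(V)$ then means: for $n$ large, $L_n \in \mathcal{U}_W$, say $L_n = \graph(T_n)$, and $T_n \to 0$ in $\Hom(L,W)$.

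For the implication (1) $\Rightarrow$ (2): suppose $L_n \to L$, write $L_n = \graph(T_n)$ with $T_n \to 0$ for $n$ large, and let $v_n \in L_n$ with $v_n \to v$ in $V$. Write $v_n = u_n + T_n u_n$ with $u_n \in L$, and decompose $v = u + w$ along $V = L \oplus W$. Comparing $L$- and $W$-components and using continuity of the projections, I get $u_n \to u$ and $T_n u_n \to w$. Since $\|T_n u_n\| \le \|T_n\|\,\|u_n\|$ and $u_n$ is bounded while $\|T_n\| \to 0$, we get $T_n u_n \to 0$, hence $w = 0$ and $v = u \in L$. This also proves the final assertion of the proposition at once: given $v \in L$, set $v_n := v + T_n v \in L_n$; then $v_n \to v$ because $T_n v \to 0$. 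For the implication (2) $\Rightarrow$ (1): argue by contrapositive. If $L_n \not\to L$, then (by compactness of $\Grass_d(V)$ and Hausdorffness) some subsequence $L_{n_k}$ converges to some $L' \neq L$. Pick $v \in L' \setminus L$; by the already-established last assertion applied to the convergent subsequence $L_{n_k} \to L'$, there exist $v_{n_k} \in L_{n_k}$ with $v_{n_k} \to v \notin L$. Extend to a full sequence by choosing $v_n \in L_n$ arbitrarily bounded for $n \notin \{n_k\}$—or more cleanly, just note that condition (2), being a statement about all sequences, is already violated along the subsequence (one may pad with the zero vector, which lies in every subspace, on the complementary indices, giving a genuine sequence $v_n \in L_n$ with a convergent subsequence whose limit $v \notin L$, contradicting (2) applied to that subsequence). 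This contradicts (2), so (2) $\Rightarrow$ (1).

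The only mild subtlety—what I would be most careful about—is the bookkeeping in the contrapositive: condition (2) quantifies over sequences $(v_n)$ with $v_n \in L_n$, and one must produce such a sequence that converges (or has a convergent subsequence, which is enough since (2) can be applied to the subsequence $(L_{n_k})$ directly after re-indexing). Using that $0 \in L_n$ for every $n$ makes the padding harmless. Everything else is routine linear algebra in the chart $\Hom(L,W)$, plus the standard facts that $\Grass_d(V)$ is compact Hausdorff and that the charts $\mathcal{U}_W$ cover it. I would keep the write-up to essentially the three short paragraphs above.
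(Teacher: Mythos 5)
Your proof of (1) $\Rightarrow$ (2) and of the final assertion is correct: the graph-chart argument ($L_n=\graph(T_n)$, $T_n\to 0$, projections onto $L$ and $W$) is clean, and the paper itself gives no actual proof to compare against — the subsection carrying this proposition's label in fact proves Proposition \ref{prop:Conv}, and Proposition \ref{prop:conv_Grass} is treated as standard. The genuine problem is your direction (2) $\Rightarrow$ (1), exactly at the point you flagged. Hypothesis (2), as stated, only constrains sequences $v_n\in L_n$ indexed by all of $\N$ that \emph{converge} in $V$. Your padded sequence (zeros off the subsequence, $v_{n_k}$ on it) does not converge, since $v\neq 0$, so it is not constrained by (2); and ``applying (2) to the subsequence'' is precisely what needs justification, because (2) for the full sequence does not pass to subsequences. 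Indeed, taken literally the implication (2) $\Rightarrow$ (1) is false: take $L\neq L'$ of dimension $d$ and set $L_n=L$ for $n$ odd, $L_n=L'$ for $n$ even. Any convergent sequence $v_n\in L_n$ has its odd-indexed terms in the closed subspace $L$, so its limit lies in $L$ and (2) holds, yet $L_n\not\to L$. So no amount of bookkeeping closes the gap with the statement read verbatim.

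The repair is to read (2) subsequentially — every convergent sequence $v_{n_k}\in L_{n_k}$ along any subsequence has its limit in $L$ (equivalently, the Kuratowski upper limit of the $L_n$ is contained in $L$) — or to build the ``furthermore'' property into condition (2). Under either reading your compactness argument is exactly right: if $L_{n_k}\to L'\neq L$, the final assertion applied to this subsequence produces $v_{n_k}\in L_{n_k}$ converging to a point of $L'\setminus L$, contradicting the strengthened hypothesis, and then compactness and Hausdorffness of $\Grass_d(V)$ force $L_n\to L$. Note that the paper only ever invokes (1) $\Rightarrow$ (2) and the final assertion, so the defective direction carries no weight elsewhere in the article; but as a proof of the equivalence as stated, yours (like the statement itself) needs this adjustment.
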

\subsection{Proof of Proposition \ref{thm:top_well_defined}}\label{sec:proof_prop_well_def_top}
For Part 1, it is enough to show that if $x_n\to x$ and $t_n\to 0$ and $\ker(\natural_{x_n,t_n})\to L$, where $L\in \Grass(V)$, then $\ker(\natural_{x,0})\subseteq L$. The map $\natural_{x,0}$ preserves the grading on $V$ and $\gr(\cF)_{x}$. Hence, it is enough to consider $v\in \ker(\natural_{x,0})\cap V^k$ for some $k$. Hence, $\natural(v)\in \cF^{k-1}+I_x\cF^k$. By Definition \ref{rem:simpler_basis}, we get that $$\natural(v)=\sum_{i\in I} f_i \natural(v_i)+\sum_{j\in J}g_j\natural(w_j),$$ where $I,J$ are some finite index sets, $v_i\in V^{\omega(i)}$ with $\omega(i)<k$, $w_j\in V^k$, $f_i,g_j\in C^\infty(M,\R)$ and $g_j(x)=0$. Since $$v-\sum_{i\in I}f_i(x_n)t_n^{k-\omega(i)}v_i-\sum_{j\in J}g_j(x_n)w_j\in \ker(\natural_{x_n,t_n})$$ and $$v-\sum_{i\in I}f_i(x_n)t_n^{k-\omega(i)}v_i-\sum_{j\in J}g_j(x_n)w_j\xrightarrow{n\to +\infty} v,$$ it follows from Proposition \ref{prop:conv_Grass} that $v\in L$.

Part 2 follows immediately from Part 1. For Part 3, let $(V,\natural)$ and $(V',\natural')$ be graded basis, $(x_n,t_n)\in M\times  \R_+^\times$ such that $(x_n,t_n)\to (x,0)$ and $\ker(\natural_{x_n,t_n})\to \natural_{x,0}^{-1}(L)$. We need to show that $\ker(\natural'_{x_n,t_n})\to \natural_{x,0}^{\prime-1}(L)$. Since Grassmannian manifolds are compact, by passing to a subsequence we can suppose that $\ker(\natural'_{x_n,t_n})$ converges, and by Part $1$ that we just proved, the limit has to be of the form $\natural_{x,0}^{\prime-1}(L')$ for some $L'\in \Grass(\gr(\cF)_x)$. We need to show that $L=L'$. Let $\phi$ be as in Theorem \ref{thm:proof_different_bases}, $$\psi:V\times M\times \R_+\to V',\quad \psi(v,y,t)=\frac{d}{ds}\Big\rvert_{s=0}\phi(sv,y,t).$$ Since $\phi$ is smooth, $\psi$ is smooth and linear in $v$. Let $v\in \ker(\natural_{y,t})$ for $t\in \R_+^\times$. By Theorem \ref{thm:proof_different_bases}.2, $$\exp(\natural_{y,t}'(\phi(sv,y,t)))\cdot y=\exp(\natural_{y,t}(sv))\cdot y=y,\quad \text{for }s\text{ small enough}.$$ Hence, by Theorem \ref{thm:periodicbounding}, $\psi(v,y,t)\in \ker(\natural_{y,t}')$. Now let $l\in \natural_{x,0}^{-1}(L)$. By Proposition \ref{prop:conv_Grass}, there exists $l_n\in\ker(\natural_{x_n,t_n}) $ such that $l_n\to l$. Hence, $\psi(l_n,x_n,t_n)\to \psi(l,x,0)$. We just proved that $\psi(l_n,x_n,t_n)\in \ker(\natural'_{x_n,t_n})$. Hence, by Proposition \ref{prop:conv_Grass}, $\psi(l,x,0)\in \natural_{x,0}^{\prime -1}(L')$. By Theorem \ref{thm:proof_different_bases}, it follows that $\natural_{x,0}(l)=\natural'_{x,0}(\psi(l,x,0))$. Hence, $\natural_{x,0}(l)\in L'$. Since $\natural_{x,0}$ is surjective, we proved that $L\subseteq L'$. Since both have the same dimension, we get that $L=L'$.
 \subsection{Proof of Theorem \ref{thm:prop of blowup}}\label{sec:proof_of_prop_of_blowup}
Let $(\mathfrak{g},\natural)$ be a graded basis. The map $\natural_{x,0}:\mathfrak{g}\to \gr(\cF)_x$ is a Lie algebra homomorphism because of \eqref{eqn:bracket_cond_graded_lie_basis}. Hence, it is also a Lie group homomorphism. Let $x_n\in M$, $t_n\in \R_+^\times$, such that $x_n\to x$, $t_n\to 0$, $\ker(\natural_{x_n,t_n})\to \natural_{x,0}^{-1}(L)$, where $L\in \cG^0_x$.
\begin{lem}\label{lem:qjsidfj} Let $g_n\in \mathfrak{g}$ converge to $g\in \mathfrak{g}$, $y_n=\exp(\natural_{t_n}(g_n))\cdot x_n$. Then $y_n\to x$ and $$\ker(\natural_{y_n,t_n})\to g\natural_{x,0}^{-1}(L)g^{-1}=\natural_{x,0}^{-1}\left(\natural_{x,0}(g)L\natural_{x,0}(g)^{-1}\right).$$
\end{lem}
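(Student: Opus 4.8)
The statement asserts two things: first that $y_n = \exp(\natural_{t_n}(g_n))\cdot x_n \to x$, and second that $\ker(\natural_{y_n,t_n})$ converges to the conjugate $g\,\natural_{x,0}^{-1}(L)\,g^{-1}$. The first claim is the easy one: since $g_n \to g$ in the fixed finite-dimensional space $\mathfrak g$, the vector fields $\natural_{t_n}(g_n)$ converge to $0$ in the $C^\infty$-topology uniformly on compact sets as $t_n \to 0$ (each component $t_n^i \natural(v_i)$ has a factor $t_n^i$), so their time-one flows applied to $x_n \to x$ converge to $x$. This uses the same kind of continuous-dependence-on-parameters argument for flows already invoked when showing $\dom(\cQ_V)$ is open.

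For the second claim, the key identity I would establish is that $\natural_{t_n}$ intertwines the adjoint action of $\mathfrak g$ (viewed as a Lie group via BCH) on itself with conjugation of flows on $M$. Concretely: if $v \in \ker(\natural_{x_n,t_n})$, i.e.\ $\exp(\natural_{t_n}(v))\cdot x_n = x_n$, then I want to show $\exp(\natural_{t_n}(g_n \cdot v \cdot g_n^{-1}))\cdot y_n = y_n$, so that $g_n \cdot v \cdot g_n^{-1} \in \ker(\natural_{y_n,t_n})$. The clean way to get this is via Theorem \ref{thm:composition_bisub}: using the map $\phi$ there, $\exp(\natural_{t_n}(\phi(g_n, \phi(v, g_n^{-1}, x_n, t_n), \ldots)))\cdot x_n$ realizes the composite flow $\exp(\natural_{t_n}(g_n))\cdot(\exp(\natural_{t_n}(v))\cdot(\exp(\natural_{t_n}(g_n^{-1}))\cdot x_n))$, and since $\phi(\cdot,\cdot,x,0)$ is just the BCH product, for $t=0$ this correction term is exactly $g \cdot v \cdot g^{-1}$. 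Because $\ker(\natural_{x_n,t_n})$ has codimension $\dim M$ for all $n$ and conjugation by the (convergent) group element $g_n$ is a linear isomorphism of $\mathfrak g$ of bounded norm, one gets $\dim\bigl(g_n \ker(\natural_{x_n,t_n}) g_n^{-1}\bigr) = \dim\ker(\natural_{y_n,t_n})$, so this inclusion of a subspace of the right dimension into $\ker(\natural_{y_n,t_n})$ forces equality: $\ker(\natural_{y_n,t_n}) = g_n \cdot \ker(\natural_{x_n,t_n}) \cdot g_n^{-1}$ — at least once $t_n$ is small enough. The subtlety is that the flow-conjugation identity only holds up to the periodic-bounding lemma (Theorem \ref{thm:periodicbounding}): a priori one only knows $\exp(\natural_{t_n}(w))\cdot y_n = y_n$ with $w$ close to $g_n \cdot v \cdot g_n^{-1}$, and one needs the bounding lemma to conclude that in fact $\natural_{t_n}(w)(y_n) = 0$, hence (after a small argument about the linear structure of $\ker(\natural_{y_n,t_n})$) that the whole conjugated subspace lies in the kernel. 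This is essentially the same maneuver used in the proof of Proposition \ref{thm:top_well_defined}.3, so I would model the bookkeeping on that argument, working with the differential $\psi$ of $\phi$ in the relevant variable to reduce the nonlinear flow identity to a linear statement about kernels.

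The final step is to pass to the limit. Given the identity $\ker(\natural_{y_n,t_n}) = g_n \cdot \ker(\natural_{x_n,t_n}) \cdot g_n^{-1}$ for large $n$, and given $\ker(\natural_{x_n,t_n}) \to \natural_{x,0}^{-1}(L)$ by hypothesis together with $g_n \to g$, one concludes by Proposition \ref{prop:conv_Grass}: for any convergent sequence of vectors in $g_n \ker(\natural_{x_n,t_n}) g_n^{-1}$, writing each as $g_n \cdot u_n \cdot g_n^{-1}$ with $u_n \in \ker(\natural_{x_n,t_n})$, continuity of the conjugation map forces $u_n$ to converge (its image does, and conjugation by $g_n$ is uniformly bi-Lipschitz near $g$), hence its limit lies in $\natural_{x,0}^{-1}(L)$, hence the original limit lies in $g\,\natural_{x,0}^{-1}(L)\,g^{-1}$. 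This shows the limit is contained in the conjugate; equality of dimensions closes it. Finally, the identification $g\,\natural_{x,0}^{-1}(L)\,g^{-1} = \natural_{x,0}^{-1}\bigl(\natural_{x,0}(g)\,L\,\natural_{x,0}(g)^{-1}\bigr)$ is immediate from the fact that $\natural_{x,0}\colon \mathfrak g \to \gr(\cF)_x$ is a Lie group homomorphism (by \eqref{eqn:bracket_cond_graded_lie_basis}), so it is equivariant for conjugation: $\natural_{x,0}(g \cdot v \cdot g^{-1}) = \natural_{x,0}(g)\,\natural_{x,0}(v)\,\natural_{x,0}(g)^{-1}$, and taking preimages of $L$ gives the claim. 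The main obstacle is the middle step — promoting the flow-level conjugation identity to an exact statement about the kernels via the periodic-bounding lemma — since the flows genuinely do not compose, and one must be careful that the correction term from $\phi$ does not destroy the dimension count; everything else is continuity and linear algebra.
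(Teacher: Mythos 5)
Your overall architecture is the paper's: realize the conjugated flow via the correction map $\phi$ of Theorem \ref{thm:composition_bisub}, use the periodic bounding lemma to land in the kernel, pass to the limit with Proposition \ref{prop:conv_Grass} plus a dimension count, and get the last displayed equality from $\natural_{x,0}$ being a group homomorphism. The genuine gap is the middle step, where you assert the exact identity $\ker(\natural_{y_n,t_n}) = g_n\cdot\ker(\natural_{x_n,t_n})\cdot g_n^{-1}$ (BCH conjugation, i.e.\ $\Ad_{g_n}$) for $t_n$ small. Nothing you have proves this, and it should not be expected to hold for $t_n\neq 0$: the flows $\exp(\natural_{t_n}(\cdot))$ compose according to BCH only up to the $t$-dependent correction $\phi$, which is the whole reason Theorem \ref{thm:composition_bisub} exists. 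The periodic bounding lemma cannot repair this the way you suggest: knowing some $w\in\ker(\natural_{y_n,t_n})$ is merely \emph{close} to $g_n\cdot v\cdot g_n^{-1}$ never forces $g_n\cdot v\cdot g_n^{-1}$ itself into the kernel, since membership in a proper linear subspace is not stable under small perturbations; there is no ``small argument about the linear structure'' that closes this. (There is also a slip in your composite: to conjugate an element fixing $x_n$ into one fixing $y_n$ you must apply $\exp(\natural_{t_n}(g_n))\circ\exp(\natural_{t_n}(sv))\circ\exp(\natural_{t_n}(-g_n))$ to $y_n$, not the expression you wrote applied to $x_n$.) What the conjugation trick really yields is that the \emph{linear} map $\psi_n\colon v\mapsto \frac{d}{ds}\Big\rvert_{s=0}\phi\big(\phi(g_n,sv,x_n,t_n),-g_n,y_n,t_n\big)$ sends $\ker(\natural_{x_n,t_n})$ into $\ker(\natural_{y_n,t_n})$ (the corrected family fixes $y_n$ for all small $s$, and Theorem \ref{thm:periodicbounding} then puts its $s$-derivative in the kernel); this $\psi_n$ coincides with $\Ad_{g_n}$ only at $t=0$ and merely converges to $\Ad_g$ as $n\to+\infty$.

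Consequently the dimension count must be run at the limit rather than at finite $n$, which is exactly what the paper does: pass to a subsequence so that $\ker(\natural_{y_n,t_n})$ converges (Grassmannians are compact), note by Proposition \ref{thm:top_well_defined}.1 that the limit has the form $\natural_{x,0}^{-1}(L')$; for $v\in\natural_{x,0}^{-1}(L)$ choose $v_n\in\ker(\natural_{x_n,t_n})$ with $v_n\to v$ (Proposition \ref{prop:conv_Grass}), and use continuity of $\phi$ together with Theorem \ref{thm:composition_bisub}.3 to get $\psi_n(v_n)\to g\cdot v\cdot g^{-1}$ with $\psi_n(v_n)\in\ker(\natural_{y_n,t_n})$, whence $g\,\natural_{x,0}^{-1}(L)\,g^{-1}\subseteq\natural_{x,0}^{-1}(L')$; equality then follows since both sides have dimension $\dim(\mathfrak g)-\dim(M)$. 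With the middle step replaced in this way, your first claim ($y_n\to x$ by continuous dependence of flows, as the vector fields $\natural_{t_n}(g_n)\to 0$) and your final identification $g\,\natural_{x,0}^{-1}(L)\,g^{-1}=\natural_{x,0}^{-1}\big(\natural_{x,0}(g)\,L\,\natural_{x,0}(g)^{-1}\big)$ are correct and match the paper.
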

\begin{proof}The last equality follows from the fact that $\natural_{x,0}$ is a Lie group homomorphism. Let $\phi$ as in Theorem \ref{thm:composition_bisub}. Consider the map $\psi:\dom(\psi)\subseteq \mathfrak{g}\times \mathfrak{g}\times M\times \R_+\to \mathfrak{g}$ defined by
  $$ \psi(g,v,y,t)=\frac{d}{ds}\Big\rvert_{s=0} \phi\Big(\phi(g,sv,y,t),-g,\exp(\natural_t(g))\cdot y,t\Big).$$The domain of $\psi$ is the set of points for which the right-hand side is well-defined. Let $v\in \ker(\natural_{y,t})$. In the next equation 
  $z=\exp(\natural_t(g))\cdot y$, then  
  \begin{align*}
&\exp\Big(\natural_{t}\Big(\phi(\phi(g,sv,y,t),-g,z,t)\Big)\Big)\cdot z\\
&=\exp(\natural_{t}(\phi(g,sv,y,t)))\cdot\Big(\exp(\natural_{t}(-g))\cdot z\Big)\\&=\exp(\natural_{t}(\phi(g,sv,y,t)))\cdot y\\&=\exp(\natural_{t}(g))\cdot\Big(\exp(\natural_{t}(sv))\cdot y\Big)\\&=\exp(\natural_{t}(g))\cdot y=z.
\end{align*}
It follows from Theorem \ref{thm:periodicbounding}, that $$\psi(g,v,y,t)\in \ker(\natural_{\exp(\natural_t(g))\cdot y,t}),\quad t\neq 0.$$

Now we can prove the lemma. It is clear that $y_n\to x$. By compactness of Grassmannian manifolds, we can by passing to a subsequence, suppose that $\ker(\natural_{y_n,t_n})$ converges, which by Proposition \ref{thm:top_well_defined}.1 must be of the form $\natural_{x,0}^{-1}(L')$ for some $L'\in \cG^0_x$. Let $v\in \natural_{x,0}^{-1}(L)$. By Proposition \ref{prop:conv_Grass}, there exists $v_n \in \ker(\natural_{x_n,t_n})$ such that $v_n\to v$. Notice that as $n\to +\infty$, $(g_n,v_n,x_n,t_n)\to (g,v,x,0)\in \dom(\psi)$. So without loss of generality, we suppose that $(g_n,v_n,x_n,t_n)\in \dom(\psi)$. Since $\psi$ is continuous (even smooth), we get that $\psi(g_n,v_n,x_n,t_n)\to \psi(g,v,x,0)$.  By Theorem \ref{thm:composition_bisub}.3, $\psi(g,v,x,0)=gvg^{-1}$. Since $\psi(g_n,v_n,x_n,t_n)\in \ker(\natural_{y_n,t_n})$, it follows that $gvg^{-1}\in \natural_{x,0}^{-1}(L')$. We have thus proved that $g\natural_{x,0}^{-1}(L)g^{-1}\subseteq \natural_{x,0}^{-1}(L')$. Since both are of the same dimension (equal to $\dim(\mathfrak{g})-\dim(M)$), we get that $g\natural_{x,0}^{-1}(L)g^{-1}=\natural_{x,0}^{-1}(L')$.
\end{proof}
Lemma \ref{lem:qjsidfj} immediately implies that $\natural_{x,0}(g)L\natural_{x,0}(g)^{-1}\in \cG^0_x$, which is Theorem \ref{thm:prop of blowup}.2. For Theorem \ref{thm:prop of blowup}.1, let $\hat{g}\in L$. By surjectivity of $\natural_{x,0}$, let $g\in \natural_{x,0}^{-1}(L)$ such that $\natural_{x,0}(g)=\hat{g}$. By Proposition \ref{prop:conv_Grass}, there exists $g_n\in \ker(\natural_{x_n,t_n})$ such that $g_n\to g$. By Lemma \ref{lem:qjsidfj}, $\ker(\natural_{x_n,t_n})\to \natural_{x,0}^{-1}\left(\hat{g}L\hat{g}^{-1}\right)$. Hence, $\hat{g}L\hat{g}^{-1}=L$, which implies that $L$ is a Lie subalgebra and a Lie subgroup. 
\subsection{Proof of Theorem \ref{thm:top_on_bG}.1}
Let $(V,\natural)$ and $(V',\natural')$ be graded basis. Let $U\subseteq \mathbb{G}$ such that $U\cap (M\times M\times \R_+^\times)$ and $\cQ^{-1}_{V'}(U)$ are open. We need to show that $\cQ_{V}^{-1}(U)$ is open. The set $\cQ_{V}^{-1}(U)\cap (V\times M\times \R_+^\times)$ is clearly open. Let $\phi$ be as in Theorem \ref{thm:proof_different_bases}. We define the set $\dom(\hat{\phi})=\pi^{-1}(\dom(\phi))$, where $\pi:V\times \mathbb{G}^0\to V\times M\times \R_+$ is the product of the identity map and the natural projection $\mathbb{G}^0\to M\times \R_+$. We also define the map \begin{equation}\label{eqn:change_of_basis}
\begin{aligned}
\hat{\phi}:\dom(\hat{\phi})\subseteq V\times \mathbb{G}^0&\to V'\times \mathbb{G}^0\\
(v,y,t)&\mapsto (\phi(v,y,t),y,t)\quad t\neq 0\\
(v,L,y,0)&\mapsto (\phi(v,y,0),L,y,0)
\end{aligned}
\end{equation}
The map $\hat{\phi}$ is continuous because $\phi$ is continuous. The following diagram commutes $$\begin{tikzcd}
\dom(\hat{\phi})\arrow[dr,"\cQ_{V}"']\arrow[r,"\hat{\phi}"]& V'\times \mathbb{G}^0\arrow[d,"\cQ_{V'}"]\\&\mathbb{G}
\end{tikzcd}.$$Hence, $\cQ_{V}^{-1}(U)\cap \dom(\hat{\phi})$ is open. Since $V\times \cG^0\times \{0\}\subseteq \dom(\hat{\phi})$, we deduce that $\cQ_{V}^{-1}(U)$ is open.
\subsection{Proof of Theorem \ref{thm:cont_structure_maps}.}
We fix a graded Lie basis $(\mathfrak{g},\natural)$. Theorem \ref{thm:top_on_bG}.1 implies that the continuity of a map $f:\mathbb{G}\to X$ where $X$ is any topological space, is equivalent to the continuity of $f_{|M\times M\times \R_+^\times}$ and $f\circ \cQ_{\mathfrak{g}}$.

 \textbf{Continuity of $s$.}
  The maps $s_{|M\times M\times \R_+^\times}:M\times M\times \R_+^\times\to \mathbb{G}^0$ and $s\circ \cQ_{\mathfrak{g}}:\mathfrak{g}\times \mathbb{G}^0\to \mathbb{G}^0$ are continuous. Continuity of the first map is easy to check. The second is the projection map $\mathfrak{g}\times \mathbb{G}^0\to \mathbb{G}^0$. 

  \textbf{Continuity of $r$.} We proceed as we did with $s$. The continuity of the map $r_{|M\times M\times \R_+^\times}:M\times M\times \R_+^\times\to \mathbb{G}^0$ is easy to check. For the map $r\circ \cQ_\mathfrak{g}$, we remark that $r\circ \cQ_\mathfrak{g}$ is given by \begin{align*}
r\circ \cQ_\mathfrak{g}:\mathfrak{g}\times \mathbb{G}^0&\to  \mathbb{G}^0\\
(g,x,t)&\mapsto (\exp(\natural_t(g))\cdot x,t)\\
(g,L,x,0)&\mapsto (\natural_{x,0}(g)L\natural_{x,0}(g)^{-1},x,0)
\end{align*}
Lemma \ref{lem:qjsidfj} says that if $(g_n,x_n,t_n)\in \mathfrak{g}\times M\times \R^\times_+$ converges to $(g,L,x,0)\in \mathfrak{g}\times \mathbb{G}^0$, then $r\circ \cQ_\mathfrak{g}(g_n,x_n,t_n)\to r\circ \cQ_\mathfrak{g}(g,L,x,0)$. It is immediate to check that if $(g_n,L_n,x_n,0)\in \mathfrak{g}\times \mathbb{G}^0$ converges to $(g,L,x,0)\in \mathbb{G}^0$, then $r\circ \cQ_\mathfrak{g}(g_n,L_n,x_n,0)\to r\circ \cQ_\mathfrak{g}(g,L,x,0)$. Hence, $r\circ \cQ_\mathfrak{g}$ is continuous.

\textbf{Continuity of $\mathrm{Id}$.} Let $\phi:\mathbb{G}^0\to \mathfrak{g}\times \mathbb{G}^0$ be the map $\phi(a)=(0,a)$ for $a\in \mathbb{G}^0$. Then the diagram $$\begin{tikzcd}\mathbb{G}^0\arrow[r,"\phi"]\arrow[dr,"\mathrm{id}"']&\mathfrak{g}\times\mathbb{G}^0\arrow[d,"\cQ_{\mathfrak{g}}"]\\&\mathbb{G}\end{tikzcd}$$ commutes. It follows that $\mathrm{id}$ is continuous. 

\textbf{Continuity of $\iota$.} Notice that $\iota(M\times M\times \R_+^\times)= M\times M\times \R_+^\times$, and $\iota_{|M\times M\times \R_+^\times}:M\times M\times \R_+^\times\to M\times M\times \R_+^\times$ is continuous. Hence, it suffices to prove that $\iota\circ \cQ_{\mathfrak{g}}$ is continuous. We define the map \begin{align*}
\phi:\mathfrak{g}\times \mathbb{G}^0&\to  \mathfrak{g}\times \mathbb{G}^0\\
(g,x,t)&\mapsto (-g,\exp(\natural_t(g))\cdot x,t)\\
(g,L,x,0)&\mapsto (-g,\natural_{x,0}(g)L\natural_{x,0}(g)^{-1},x,0)
\end{align*}
Continuity of the map $\phi$ follows from continuity of $r\circ \cQ_{\mathfrak{g}}$. One can easily check that the diagram $$\begin{tikzcd}
\mathfrak{g}\times \mathbb{G}^0\arrow[d,"\cQ_{\mathfrak{g}}"]\arrow[r,"\phi"]&\mathfrak{g}\times \mathbb{G}^0\arrow[d,"\cQ_{\mathfrak{g}}"]\\\mathbb{G}\arrow[r,"\iota"]&\mathbb{G}
\end{tikzcd}$$commutes. It follows that $\iota$ is continuous.

\textbf{Continuity of $m$.} The restriction of $m$ to the $M\times M\times \R_+^\times$ part of $\mathbb{G}$ is continuous. Consider the product
 $$(\mathfrak{g}\times \mathbb{G}^0)\times_{s\circ\cQ_{\mathfrak{g}},r\circ \cQ_{\mathfrak{g}}}\times (\mathfrak{g}\times \mathbb{G}^0).$$ Since the map $s\circ \cQ_{\mathfrak{g}}$ is the projection onto $\mathbb{G}^0$, 
 it follows that we can identify the product with $\mathfrak{g}\times \mathfrak{g}\times \mathbb{G}^0$. 
 It suffices to show that the map $$m\circ (\cQ_{\mathfrak{g}}\times \cQ_{\mathfrak{g}}):\mathfrak{g}\times \mathfrak{g}\times \mathbb{G}^0\to \mathbb{G}$$ is continuous at a point of the form $(g,h,x,L,0)$. Let $\phi$ be as in Theorem \ref{thm:composition_bisub}. We define $\dom(\hat{\phi})=\pi^{-1}(\dom(\phi))$, where $\pi:\mathfrak{g}\times \mathfrak{g}\times \mathbb{G}^0\to \mathfrak{g}\times \mathfrak{g}\times M\times \R_+$ is the product of the identity map on $\mathfrak{g}\times\mathfrak{g}$ and the natural projection $\mathbb{G}^0\to M\times \R_+$. We also define the map \begin{equation}\label{eqn:m_product}\begin{aligned}
\hat{\phi}:\dom(\hat{\phi})&\to \mathfrak{g}\times \mathfrak{g}\times \mathbb{G}^0\\
(g,h,y,t)&\mapsto (\phi(g,h,y,t),y,t),\quad t\neq 0\\
(g,h,L,y,0)&\mapsto (\phi(g,h,y,0),L, y,0 )=(g\cdot h, L,y,0 )
\end{aligned}
\end{equation}
The map $\hat{\phi}$ is continuous because $\phi$ is continuous. The diagram $$\begin{tikzcd}\dom(\hat{\phi})\arrow[r,"\hat{\phi}"]\arrow[dr,"m\circ (\cQ_{\mathfrak{g}}\times \cQ_{\mathfrak{g}})"']&\mathfrak{g}\times \mathbb{G}^0\arrow[d,"\cQ_{\mathfrak{g}}"]\\& \mathbb{G}
\end{tikzcd}$$commutes. Continuity of $m:\mathbb{G}\times_{r,s} \mathbb{G}\to \mathbb{G}$ follows.

\textbf{The maps $s$ and $r$ are open.} Let $U\subseteq \mathbb{G}$ be an open subset. Then $$s(U)=s(U\cap (M\times M\times \R_+^\times))\cup s\circ \cQ_{\mathfrak{g}}(\cQ_{\mathfrak{g}}^{-1}(U)).$$ The map $s\circ \cQ_{\mathfrak{g}}$ is a projection map, 
hence open. It follows that $s$ is open. Since $\iota\circ \iota$ is the identity, it follows that $\iota$ is a homeomorphism. Hence, $r=s\circ \iota$ is also open.
\subsection{Proof of Proposition \ref{rem:cQ_V_quotient}}
First for the existence of $\mathbb{U}$ satisfying $1$ and $2$, let $(V,\natural)$ be a graded basis. The map $$\psi:V\times M\to M\times M,\quad \psi(v,x)=(\exp(\natural(v))\cdot x,x)$$ is well-defined in a neighbourhood of $\{0\}\times M$. Let $U\subseteq \dom(\psi)$ be an open neighbourhood of $\{0\}\times M$ such that $\psi$ is a submersion at every point in $U$. We define $\mathbb{U}\subseteq V\times \mathbb{G}^0\to \mathbb{G}$ by
\begin{align*}
\mathbb{U}=V\times \cG^0\times \{0\}\sqcup \{(v,x,t)\in V\times M\times \R_+^\times:(\sum_{i=1}^Nt^iv_i,x)\in U\},
\end{align*}
where $v=\sum_{i=1}^Nv_i$ is the decomposition of $v$ in $V=\oplus_{i=1}^NV^i$. The set $\mathbb{U}$ is easily seen to be an open subset of $V\times \mathbb{G}^0$ which satisfies $1$ and $2$.

Now let $(V,\natural)$ be a graded basis, $\mathbb{U}\subseteq V\times \mathbb{G}^0$ an open subset satisfying $1$ and $2$.
\begin{lem}\label{lem:sqjdfjlq} 
  If $v,v'\in V$, $(L,x,0)\in \mathbb{G}^0$ such that we have $\cQ_V(v,L,x,0)=\cQ_V(v',L,x,0)$. Then there exists a continuous partially defined function $\hat{\phi}:V\times \mathbb{G}^0\to V\times \mathbb{G}^0$ such that $\hat{\phi}(v',L,x,0)=(v,L,x,0)$ and $\cQ_{V}\circ \hat{\phi}=\cQ_{V}$.
\end{lem}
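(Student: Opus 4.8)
The plan is to write $\hat\phi$ down explicitly, in the spirit of the change-of-basis map \eqref{eqn:change_of_basis} and the proof of Theorem~\ref{thm:top_on_bG}.1, but with an extra factor that carries $v'$ to $v$. First I would reduce to the case where $(V,\natural)$ is a graded \emph{Lie} basis. Given an arbitrary graded basis $(V,\natural)$, let $(\mathfrak{g},\natural)$ be the free nilpotent Lie algebra of depth $N$ generated by $V$, with $\natural$ extended via \eqref{eqn:bracket_cond_graded_lie_basis}. The inclusion $V\hookrightarrow\mathfrak{g}$ solves the equations of Theorem~\ref{thm:proof_different_bases} trivially, so it lifts to $\hat\rho(w,\mathrm{pt})=(w,\mathrm{pt})$ with $\cQ_{\mathfrak{g}}\circ\hat\rho=\cQ_V$; and Theorem~\ref{thm:proof_different_bases} applied in the direction $\mathfrak{g}\to V$, lifted as in \eqref{eqn:change_of_basis}, gives a continuous $\hat\sigma$ with $\cQ_V\circ\hat\sigma=\cQ_{\mathfrak{g}}$ which one arranges so that $\hat\sigma(w,\mathrm{pt})=(w,\mathrm{pt})$ for $w\in V$ (this is consistent because $w\mapsto w$ already solves the defining equations; concretely, take the ``hint'' in the construction of Theorem~\ref{thm:proof_different_bases} to be a linear projection $\mathfrak{g}\to V$). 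Since $\natural_{x,0}$ agrees on $V$, the hypothesis gives $\cQ_{\mathfrak{g}}(v,L,x,0)=\cQ_{\mathfrak{g}}(v',L,x,0)$, and if $\hat\phi_{\mathfrak{g}}$ solves the lemma over $(\mathfrak{g},\natural)$ then $\hat\phi:=\hat\sigma\circ\hat\phi_{\mathfrak{g}}\circ\hat\rho$ solves it over $(V,\natural)$, with the base point sent exactly to $(v,L,x,0)$.

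So assume $(\mathfrak{g},\natural)$ is a graded Lie basis. As in the proof of Theorem~\ref{thm:prop of blowup}, $\natural_{x,0}\colon\mathfrak{g}\to\gr(\cF)_x$ is a group homomorphism, so the hypothesis says $\natural_{x,0}(v')^{-1}\natural_{x,0}(v)\in L$; set $c:=v'^{-1}\cdot v\in\mathfrak{g}$, so that $\natural_{x,0}(c)\in L$ and $v'\cdot c=v$. By \eqref{eqn:Grass_natural}, the assignment sending $(y,t)\in M\times\R_+^\times$ to $\ker(\natural_{y,t})$ and $(L',y,0)$ to $\natural_{y,0}^{-1}(L')$ is the pullback, along the continuous map $\mathbb{G}^0\to\Grass(\mathfrak{g})$, $\mathrm{pt}\mapsto\mathrm{pr}_1(\Grass(\natural)(\mathrm{pt}))$, of the tautological bundle on $\Grass(\mathfrak{g})$; since this tautological bundle is locally trivial and $\mathbb{G}^0$ is metrizable (Proposition~\ref{thm:top_well_defined}), it admits a continuous section $c'$ on a neighbourhood of $(L,x,0)$ in $\mathbb{G}^0$ with $c'(L,x,0)=c$. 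Writing $\pi\colon\mathbb{G}^0\to M\times\R_+$ for the projection and $\phi$ for the flow-composition map of Theorem~\ref{thm:composition_bisub}, I define
$$\hat\phi(w,\mathrm{pt}):=\bigl(\phi(w,\,c'(\mathrm{pt}),\,\pi(\mathrm{pt})),\ \mathrm{pt}\bigr),$$
on the open neighbourhood of $(v',L,x,0)$ where the right-hand side makes sense (non-empty, since $\dom(\phi)$ is open and contains $\mathfrak{g}\times\mathfrak{g}\times M\times\{0\}\ni(v',c,x,0)$, and $c'$ is continuous with $c'(L,x,0)=c$).

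Finally I would verify the three required properties. Continuity of $\hat\phi$ is immediate from continuity of $\phi$, $c'$ and $\pi$. For $t\neq0$: since $c'(y,t)\in\ker(\natural_{y,t})$, the vector field $\natural_t(c'(y,t))$ vanishes at $y$, so its time-one flow fixes $y$; hence by Theorem~\ref{thm:composition_bisub}.2 one has $\exp(\natural_t(\phi(w,c'(y,t),y,t)))\cdot y=\exp(\natural_t(w))\cdot\bigl(\exp(\natural_t(c'(y,t)))\cdot y\bigr)=\exp(\natural_t(w))\cdot y$, that is, $\cQ_{\mathfrak{g}}(\hat\phi(w,y,t))=\cQ_{\mathfrak{g}}(w,y,t)$. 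At $t=0$: by Theorem~\ref{thm:composition_bisub}.3, $\hat\phi(w,L',y,0)=(w\cdot c'(L',y,0),L',y,0)$; since $L'\in\cG^0_y$ is a subgroup (Theorem~\ref{thm:prop of blowup}.1), $\natural_{y,0}$ is a homomorphism, and $\natural_{y,0}(c'(L',y,0))\in L'$, we get $\natural_{y,0}(w\cdot c'(L',y,0))\,L'=\natural_{y,0}(w)\,L'$, i.e. $\cQ_{\mathfrak{g}}(\hat\phi(w,L',y,0))=\cQ_{\mathfrak{g}}(w,L',y,0)$; in particular $\hat\phi(v',L,x,0)=(v'\cdot c,L,x,0)=(v,L,x,0)$. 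The points requiring genuine care are: checking that the domains of $\phi$, $c'$, $\hat\rho$ and $\hat\sigma$ really overlap in a neighbourhood of the base point; the existence of the continuous section $c'$, where the precise topology of $\mathbb{G}^0$ from Proposition~\ref{thm:top_well_defined} (that $\Grass(\natural)$ is an embedding) enters; and, in the general-basis reduction, the normalisation of $\hat\sigma$ that makes the value at the base point equal to $(v,L,x,0)$ on the nose rather than merely a point with the same image under $\cQ_V$.
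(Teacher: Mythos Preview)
Your argument is correct and follows essentially the same route as the paper's proof: reduce to a graded Lie basis, take a continuous local section $c'$ of the tautological bundle through $v'^{-1}\cdot v$, and set $\hat\phi(w,\mathrm{pt})=(\phi(w,c'(\mathrm{pt}),\pi(\mathrm{pt})),\mathrm{pt})$ using Theorem~\ref{thm:composition_bisub}; then pass back to a general basis by sandwiching with the change-of-basis maps of Theorem~\ref{thm:proof_different_bases}. The only cosmetic difference is that you use the inclusion $V\hookrightarrow\mathfrak{g}$ for $\hat\rho$ (so the Lie-basis case is applied directly to $v,v'$), whereas the paper first pushes $v'$ to some $v''=\psi_2(v',x,0)\in\mathfrak{g}$; and for $\hat\sigma$ you only need the normalisation $\sigma(w,x,0)=w$ for $w\in V$ at $t=0$ (which is exactly what the paper invokes from \cite[Appendix~A]{MohsenMaxHypo}), not the stronger claim for all $t$.
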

\begin{proof}
First we prove the lemma for a graded Lie basis $(\mathfrak{g},\natural)$. Let $v,v'\in \mathfrak{g}$ such that \begin{equation}
 \cQ_\mathfrak{g}(v,L,x,0)=\cQ_\mathfrak{g}(v',L,x,0).
\end{equation}This means that $v^{\prime-1}v\in \natural_{x,0}^{-1}(L)$. On the space $\Grass(\mathfrak{g})\times M\times \R_+$, there is the tautological vector bundle whose fiber at $(A,y,t)$ is $A$. Let $f:\Grass(\mathfrak{g})\times M\times \R_+\to \mathfrak{g}$ be any section of this vector bundle locally defined in a neighbourhood of $(\natural_{x,0}^{-1}(L),x,0)$ with $f(\natural_{x,0}^{-1}(L),x,0)=v^{\prime-1}v$. By composing $f$ with $\Grass(\natural)$ in \eqref{eqn:Grass_natural}, we get a partially defined continuous map $f:\mathbb{G}\to \mathfrak{g}$ such that $f(L,x,0)=v^{\prime-1}v$ and if $t\neq 0$, then $f(y,t)\in \ker(\natural_{y,t})$. Hence, $\exp(\natural_{t}(f(y,t)))\cdot y=y$. Let $\phi$ be as in Theorem \ref{thm:composition_bisub}. Then we have a partially defined map in a neighbourhood of $(v,L,x,0)$  \begin{align*}
\hat{\phi}:\mathfrak{g}\times \mathbb{G}^0&\to \mathfrak{g}\times \mathbb{G}^0\\
(w,y,t)&\mapsto (\phi(w,f(y,t),y,t),y,t)\\(w,K,y,0)&\mapsto(\phi(w,f(y,K,0),y,0),K,y,0)\quad w\in \mathfrak{g}, y\in M,K\in \cG^0_y
\end{align*}
The map $\hat{\phi}$ is continuous because $\phi$ and $f$ are continuous. The map $\hat{\phi}$ sends $(v',L,x,0)$ to $(v,L,x,0)$ and $\cQ_{\mathfrak{g}}\circ \hat{\phi}=\cQ_{\mathfrak{g}}$.

Now let $(V,\natural)$ be any graded basis with $v,v'\in V$ such that $\cQ_V(v,L,x,0)=\cQ_V(v',L,x,0)$. Let $(\mathfrak{g},\natural)$ be the graded Lie basis generated by $(V,\natural)$ as we did after \eqref{eqn:bracket_cond_graded_lie_basis}. 
So $V\subseteq \mathfrak{g}$ and $\natural:\mathfrak{g}\to \cX(M)$ extends $\natural:V\to \cX(M)$. Hence, $\cQ_{\mathfrak{g}}(v,L,x,0)=\cQ_V(v,L,x,0)$. Let $\psi_1:\mathfrak{g}\times M\times \R_+\to V$ be the map obtained from Theorem \ref{thm:comparison}. 
Then the construction of $\psi_1$ in \cite[Appendix A]{MohsenMaxHypo} has the property that $\psi_1(w,x,0)=(w,x,0)$ for all $w\in V$.  
Now let $\psi_2:V\times M\times \R_+\to \mathfrak{g}$ be the map also obtained from Theorem \ref{thm:comparison}. Then take $v''=\psi_2(v',x,0)\in \mathfrak{g}$. Then $\cQ_\mathfrak{g}(v'',L,x,0)=\cQ_V(v',L,x,0)$. 
So $\cQ_\mathfrak{g}(v'',L,x,0)=\cQ_\mathfrak{g}(v,L,x,0)$. Hence, there exists $\hat{\phi}:\mathfrak{g}\times \mathbb{G}^0\to \mathfrak{g}\times \mathbb{G}^0$ that sends $(v'',L,x,0)$ to $(v,L,x,0)$ and preserves $\cQ_\mathfrak{g}$. Let $\hat{\psi}_1:\mathfrak{g}\times \mathbb{G}^0\to V\times\mathbb{G}^0$ and $\hat{\psi}_2:V\times\mathbb{G}^0\to \mathfrak{g}\times\mathbb{G}^0$ as in \eqref{eqn:change_of_basis} associated to $\psi_1$ and $\psi_2$. Then the required map is $\hat{\psi}_1\circ \hat{\phi}\circ \hat{\psi}_2$.
\end{proof}
 Let $W\subseteq \mathbb{U}$ be an open subset. By our hypothesis on $\mathbb{U}$, $\cQ_{V}(W)\cap M\times M\times \R_+^\times$ is an open subset of $M\times M\times \R_+^\times$. By Theorem \ref{thm:top_on_bG}.1, to show that $\cQ_{V}(W)$ is open it suffices to show that $\cQ_{V}^{-1}(\cQ_V(W))$ is open. Hence, it is enough to show that for any $(v,L,x,0)\in W$, and any point $(v',L,x,0)\in  V\times \mathbb{G}^0$ such that $\cQ_V(v',L,x,0)=\cQ_V(v,L,x,0)$, the set $\cQ_{V}^{-1}(\cQ_V(W))$ contains a neighbourhood of $(v',L,x,0)$. By Lemma \ref{lem:sqjdfjlq}, $\cQ_{V}^{-1}(\cQ_V(W))$ contains the neighbourhood $\hat{\phi}^{-1}(W)$ of $(v',L,x,0)$.
\subsection{Proof of Theorem \ref{thm:top_on_bG}.2}\label{sec:proof_Hasud}
We first show that $\mathbb{G}$ is Hausdorff. By Theorem \ref{thm:cont_structure_maps}, the map $s:\mathbb{G}\to \mathbb{G}^0$ is continuous. The space $\mathbb{G}^0$ is Hausdorff by Proposition \ref{thm:top_well_defined}.
Hence, it remains to show that one can separate two different points of the form $(gL,x,0)$ and $(hL,x,0)$ with $g,h\in \gr(\cF)_x$. Let $(\mathfrak{g},\natural)$ be a graded Lie basis. 
We pick $\tilde{g}, \tilde{h}\in \mathfrak{g}$ such that $\natural_{x,0}(\tilde{g})=g$ and $\natural_{x,0}(\tilde{h})=h$. The points $(\tilde{g},L,x,0)$ and $(\tilde{h},L,x,0)$ can be separated in $\mathfrak{g}\times \mathbb{G}^0$ by open sets 
$U_1$ and $U_2$. We can further suppose that $U_1,U_2\subseteq \mathbb{U}$, where $\mathbb{U}$ is given by Proposition \ref{rem:cQ_V_quotient}. Hence, $\cQ_{\mathfrak{g}}(U_1)$ and $\cQ_{\mathfrak{g}}(U_2)$ are open. 
We claim that for $U_1$ and $U_2$ small enough $\cQ_{\mathfrak{g}}(U_1)\cap\cQ_{\mathfrak{g}}(U_2)=\emptyset$. Suppose that this is not possible. It is straightforward to see that for $U_1$ and $U_2$ small enough 
$\cQ_{\mathfrak{g}}(U_1)\cap\cQ_{\mathfrak{g}}(U_2)\cap \cG\times \{0\}=\emptyset$. Hence, it follows that there exists two sequences $(\tilde{g}_n,x_n,t_n)$ and $(\tilde{h}_n,x_n,t_n)$ with $t_n>0$ such that 
$\cQ_{\mathfrak{g}}(\tilde{g}_n,x_n,t_n)=\cQ_{\mathfrak{g}}(\tilde{h}_n,x_n,t_n)$ and $(\tilde{g}_n,x_n,t_n)\to (\tilde{g},L,x,0)$ and $(\tilde{h}_n,x_n,t_n)\to (\tilde{h},L,x,0)$ in $\mathfrak{g}\times \mathbb{G}^0$. Hence, 
$$\exp(\natural_{t_n}(\tilde{g}_n))\cdot x_n=\exp(\natural_{t_n}(\tilde{h}_n))\cdot x_n$$ which implies $$\exp(\natural_{t_n}(-\tilde{h}_n))\cdot(\exp(\natural_{t_n}(\tilde{g}_n))\cdot x_n)= x_n$$ Let $\phi$ be as in Theorem \ref{thm:composition_bisub}. 
Then for $n$ big enough $(-\tilde{h}_n,\tilde{g}_n,x_n,t_n)\in \dom(\phi)$ because $(-\tilde{h}_n,\tilde{g}_n,x_n,t_n)\to (-\tilde{h},\tilde{g},x,0)\in \dom(\phi)$. Also, we have $\phi(-\tilde{h}_n,\tilde{g}_n,x_n,t_n)\to \phi(-\tilde{h},\tilde{g},x,0)=\tilde{h}^{-1}\tilde{g}$. Furthermore, $$\exp(\natural_{t_n}(\phi(-\tilde{h}_n,\tilde{g}_n,x_n,t_n)))\cdot x_n=\exp(\natural_{t_n}(-\tilde{h}_n))\cdot\left(\exp(\natural_{t_n}(\tilde{h}_n))\cdot x_n\right)=x_n.$$ By Theorem \ref{thm:periodicbounding}, we deduce that for $n$ big enough $\phi(-\tilde{h}_n,\tilde{g}_n,x_n,t_n)\in \ker(\natural_{x_n,t_n})$. Since $(x_n,t_n)\to (L,x,0)$ in $\mathbb{G}^0$, we deduce $\ker(\natural_{x_n,t_n})\to  \natural_{x,0}^{-1}(L)$. By Proposition \ref{prop:conv_Grass}, $\tilde{h}^{-1}\tilde{g}\in \natural_{x,0}^{-1}(L)$. This implies that $gL=hL$ and $(gL,x,0)$ and $(hL,x,0)$  are an identical point which is a contradiction.

Proposition \ref{rem:cQ_V_quotient} implies that $\cQ_{\mathfrak{g}}(\mathbb{U})$ is second countable. Since $\mathbb{G}=(M\times M\times \R_+^\times) \cup \cQ_{\mathfrak{g}}(\mathbb{U})$, it follows that $\mathbb{G}$ is second countable. Again Proposition \ref{rem:cQ_V_quotient} implies that $\mathbb{G}$ is locally compact. Hence, it is a $T_3$ topological space. By Urysohn's metrization theorem, $\mathbb{G}$ is metrizable.
\subsection{Proof of Proposition \ref{prop:conv_Grass}}
The proof of the two parts is similar. The implication $b\implies a$ (with for some in both cases) follows from the continuity of $\cQ_V$. The implication $a\implies b$ follows from Proposition \ref{rem:cQ_V_quotient} and Theorem \ref{thm:top_on_bG}.2.
\section{Continuous family groupoids}\label{sec:cont_fam_grp}
In this section, we prove that the groupoid $\mathbb{G}\rightrightarrows \mathbb{G}^0$ is naturally a continuous family groupoid. Continuous family groupoids were introduced by Paterson \cite{ContFamilyGroupoids}. We recall here the definition. We first need the following terminology \begin{dfn} Let $X$ be a topological space and $g:\R^n\times X\to \R^m$ a continuous function. Then we say that $g$ is $C^{\infty,0}$ if $g(\cdot,x)$ is smooth for any $x\in X$, and for any finite index set $I$, $ \frac{\partial}{\partial_It}g:\R^n\times X\to \R^m$ is continuous.

More generally let $Y$ be a topological space, $f:\R^n\times X\to \R^m\times Y$ a continuous function which is of the form $f(t,x)=(g(t,x),h(x))$ for some $g:\R^n\times X\to \R^m$ and $h:X\to Y$. We say that $f$ is $C^{\infty,0}$ if $g$ is $C^{\infty,0}$.
\end{dfn}
\begin{dfn} \label{dfn:cont_fam}

A continuous family groupoid with $s$-fibers of dimension $n\in \N$ is a groupoid $G\rightrightarrows G^0$ such that \begin{enumerate}
\item The set $G$ is equipped with a locally compact Hausdorff topology, and $G^0\subseteq G$ is equipped with the subspace topology.
\item The structure maps $s,r,\iota,m$ are continuous, where $G^{(2)}\subseteq G\times G$ is equipped with the subspace topology.
\item We have a family $\cA$ of pairs $(\Omega,\phi)$ called charts of $G$, where $\Omega\subseteq G$ is an open subset, and $\phi:\Omega\to \R^n\times G^{0}$ is an open embedding such that $G=\bigcup_{(\Omega,\phi)\in \cA}\Omega$ and the following diagram commutes 
\begin{equation}\label{eqn:Diagram_Cont_Fam}
 \begin{tikzcd}
\Omega\arrow[r,"\phi"]\arrow[d,"s"]&\R^n\times G^{0} \arrow[dl,"p"]\\G^0
\end{tikzcd}
\end{equation}
where $p$ is the projection onto the second coordinate.
\item If $(\Omega,\phi)$ and $(\Omega',\phi')$ are charts, then the partially defined map \begin{align}\label{eqn:map_psiqsndlfjk}
\psi:\R^n\times G\to \R^n \times G^0,\quad \psi(t,\gamma)=\phi^{\prime}(\phi^{-1}(t,r(\gamma))\cdot\gamma)
\end{align} is $C^{\infty,0}$. By restricting the map $\psi$ to $\R^n\times G^0$, we get that  $\phi'\circ \phi^{-1}$ is $C^{\infty,0}$ on its domain.
\item The family $\cA$ is maximal, i.e, if $\Omega\subseteq G$ is an open subset and $\phi:\Omega\to \R^n\times G^{0}$ is an open embedding such that \eqref{eqn:Diagram_Cont_Fam} commutes and for every $(\Omega',\phi')\in \cA$, the map $\phi'\circ \phi^{-1}$ is $C^{\infty,0}$, then $(\Omega,\phi)\in \cA$.
\end{enumerate}
\end{dfn}

 A way of thinking of a continuous family groupoid is that it is a groupoid $G$ such that $G_x$ is a smooth manifold for every $x\in  G^0$ and such that the smooth structure varies continuously in $x$ and such that right multiplication by $\gamma\in G$ gives a diffeomorphism $G_{r(\gamma)}\to G_{s(\gamma)}$ which also varies continuously in $\gamma$. 
 
 Given a continuous family groupoid $G$, we define $C^{\infty,0}(G)$ to be the space of continuous functions $f:G\to \C$ such that for every chart $(\Omega,\phi)$, $f\circ \phi^{-1}$ is $C^{\infty,0}$.
 
  We will show that $\mathbb{G}$ can be naturally equipped with charts so that it becomes a continuous family groupoid. Instead of defining charts on $\mathbb{G}$, we will define the space $C^{\infty,0}(\mathbb{G})$ and then show that it comes from charts.
\begin{dfn}\label{dfn:Cinfinity0} Let $f:\mathbb{G}\to \C$ be a continuous function. We say that $f\in C^{\infty,0}(\mathbb{G})$ if \begin{itemize}
\item The restriction of $f$ to $(y,x,t)\in M\times M\times \R_{+}^\times$ is $C^{\infty,0}$ where it is smooth in the variable $y$ and continuous in $x$ and $t$.
\item For every (or for some) graded basis $(V,\natural)$, the map $f\circ \cQ_V:V\times \mathbb{G}^0\to \C$ is $C^{\infty,0}$, i.e., it is smooth in $V$ and continuous in $\mathbb{G}^0$.
\end{itemize}
\end{dfn}
The fact that (for every) is equivalent to (for some) in Definition \ref{dfn:Cinfinity0} follows from the fact that the map $\hat{\phi}$ defined in \eqref{eqn:change_of_basis} is $C^{\infty,0}$ because $\phi$ is smooth.
\begin{theorem}\label{thm:cont_fam} There exists a unique continuous family groupoid structure on $\mathbb{G}$ such that its $C^{\infty,0}$ functions are the ones given by Definition  \ref{dfn:Cinfinity0}.
\end{theorem}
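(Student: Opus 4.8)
\emph{Overview.} The plan is to realise the class of functions of Definition~\ref{dfn:Cinfinity0} by an explicit atlas built from the maps $\cQ_{\mathfrak g}$ attached to a fixed graded Lie basis $(\mathfrak g,\natural)$, to verify the five axioms of Definition~\ref{dfn:cont_fam}, to identify the resulting $C^{\infty,0}$-functions with those of Definition~\ref{dfn:Cinfinity0}, and finally to deduce uniqueness from the fact that a continuous family groupoid structure is pinned down by its algebra of $C^{\infty,0}$-functions.

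\emph{The charts.} Put $n=\dim M$; every $s$-fibre of $\mathbb G$ has dimension $n$ (over $t\neq0$ it is $M\times\{x\}\times\{t\}$, and over $t=0$ it is $\gr(\cF)_x/L$, of dimension $\dim\gr(\cF)_x-\dim L=\dim M$ by Theorem~\ref{thm:prop of blowup}). Fix $\mathbb U\subseteq\mathfrak g\times\mathbb G^0$ as in Proposition~\ref{rem:cQ_V_quotient}. Given $\gamma_0\in\mathbb G$ with $\cQ_{\mathfrak g}(v_0,a_0)=\gamma_0$ for some $(v_0,a_0)\in\mathbb U$, choose an $n$-dimensional affine subspace $A=v_0+\mathfrak m\subseteq\mathfrak g$ transverse at $(v_0,a_0)$ to the $\cQ_{\mathfrak g}$-fibre: if $a_0=(L_0,x_0,0)$ this means $\mathfrak m\cap\natural_{x_0,0}^{-1}(L_0)=\{0\}$, and if $a_0=(x_0,t_0)$ with $t_0\neq0$ it means $\mathfrak m$ meets the kernel of the differential at $v_0$ of $v\mapsto\exp(\natural_{t_0}(v))\cdot x_0$ only in $\{0\}$. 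The goal is to show that $\cQ_{\mathfrak g}$ restricts to a homeomorphism of an open neighbourhood of $(v_0,a_0)$ in $(A\times\mathbb G^0)\cap\mathbb U$ onto an open neighbourhood $\Omega\ni\gamma_0$; since $s\circ\cQ_{\mathfrak g}=\mathrm{pr}_{\mathbb G^0}$, identifying $A\cong\R^n$ linearly and inverting yields a chart $\phi\colon\Omega\to\R^n\times\mathbb G^0$ with $p\circ\phi=s$. For $\gamma_0\in M\times M\times\R_+^\times$ one may instead take the charts $(y,x,t)\mapsto(\kappa(y),x,t)$ coming from coordinate charts $\kappa$ of $M$ and the (Lie groupoid) smooth structure of $M\times M\times\R_+^\times$; these cause no trouble.

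\emph{The key reduction, and the main obstacle.} Everything reduces to producing, near $(v_0,a_0)$, a retraction $\rho$ onto $A$ along the $\cQ_{\mathfrak g}$-fibres: defined on an open neighbourhood of $(v_0,a_0)$ in $\mathbb U$, smooth in the $\mathfrak g$-variable and continuous in the $\mathbb G^0$-variable, with $\rho(v,a)\in A$, $\rho(v_0,a_0)=(v_0,a_0)$, and $\cQ_{\mathfrak g}(\rho(v,a),a)=\cQ_{\mathfrak g}(v,a)$. Granting $\rho$: openness of $\Omega$ follows because $\cQ_{\mathfrak g}$ of a neighbourhood of $(v_0,a_0)$ in $\mathbb U$ is open by Proposition~\ref{rem:cQ_V_quotient} and equals $\cQ_{\mathfrak g}$ of its $\rho$-image inside $(A\times\mathbb G^0)\cap\mathbb U$; and $\cQ_{\mathfrak g}$ is injective on $(A\times\mathbb G^0)\cap\mathbb U$ near $(v_0,a_0)$ because the $\cQ_{\mathfrak g}$-fibre direction varies continuously over $\mathbb G^0$ (it is part of the embedding of $\mathbb G^0$ into the Grassmannian bundle), so transversality with $\mathfrak m$ persists, and over $t=0$ two elements of $A$ in a common fibre differ by an element of $\natural_{x,0}^{-1}(L)\cap\mathfrak m=\{0\}$ (for $t\neq0$ one invokes Theorem~\ref{thm:periodicbounding} as in the proof of Theorem~\ref{thm:top_on_bG}). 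The retraction $\rho$ is built in the spirit of Lemma~\ref{lem:sqjdfjlq}: over $t=0$, using that $\natural_{x,0}$ is a group homomorphism and $L$ a subgroup (Theorem~\ref{thm:prop of blowup}), one factors $v=v'\cdot h$ uniquely with $v'\in A$ and $h\in\natural_{x,0}^{-1}(L)$ for $v$ near $v_0$, by the implicit function theorem applied to the multiplication $A\times\natural_{x,0}^{-1}(L)\to\mathfrak g$, with continuous dependence on $(L,x)$ via Proposition~\ref{prop:conv_Grass}; over $t\neq0$ one transports this factorisation using the composition map of Theorem~\ref{thm:composition_bisub} together with the periodic bounding lemma, solving the resulting parametrised equation by the implicit function theorem. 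Carrying this out \emph{uniformly}, so that the $t=0$ and $t\neq0$ regimes match in a single smooth-in-$\mathfrak g$, continuous-in-$\mathbb G^0$ formula, is the main technical obstacle; once it is done, the remaining verifications are bookkeeping.

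\emph{Axioms, identification, and uniqueness.} Axioms (1)--(2) of Definition~\ref{dfn:cont_fam} are Theorems~\ref{thm:top_on_bG} and~\ref{thm:cont_structure_maps}, and (3) is the construction above. Transitions among $\cQ_{\mathfrak g}$-charts, and between them and the manifold charts on $M\times M\times\R_+^\times$, are $C^{\infty,0}$ because composing two of them only inserts $\rho$ and the smooth change-of-basis map of Theorem~\ref{thm:proof_different_bases}; declaring $\cA$ the maximal compatible atlas gives (5). For (4), in $\cQ_{\mathfrak g}$-charts the multiplication of $\mathbb G$ is governed by the composition map of Theorem~\ref{thm:composition_bisub} (as in the proof of continuity of $m$ in Theorem~\ref{thm:cont_structure_maps}) and the range map by $g\mapsto\natural_{x,0}(g)L\natural_{x,0}(g)^{-1}$ (Lemma~\ref{lem:qjsidfj}), so $\psi(t,\gamma)=\phi'\big(\phi^{-1}(t,r(\gamma))\cdot\gamma\big)$ is a composite of maps smooth in the $\R^n$-variables and continuous in the rest, hence $C^{\infty,0}$. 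Then $C^{\infty,0}(\mathbb G,\cA)$ equals the class of Definition~\ref{dfn:Cinfinity0}: ``$\subseteq$'' because $\cQ_{\mathfrak g}$ factors through charts via $\rho$ (and through the manifold charts on $M\times M\times\R_+^\times$), and ``$\supseteq$'' because $f\circ\phi^{-1}(w,a)=(f\circ\cQ_{\mathfrak g})(\theta(w,a),a)$ for a $\cQ_{\mathfrak g}$-chart $\phi$ with smooth $\theta$, using the basis-independence noted after Definition~\ref{dfn:Cinfinity0}. Finally, for uniqueness, if $\cA'$ is another such structure with the same $C^{\infty,0}$-functions then, $C^{\infty,0}$ being a local condition and $C^{\infty,0}$-partitions of unity being available, the two classes agree on every open subset; hence for $(\Omega',\phi')\in\cA'$ each component of $\phi'=(\phi'_1,\dots,\phi'_n,s)$ is $C^{\infty,0}$ with respect to $\cA$, so every transition $\phi'\circ\phi^{-1}(w,a)=(\phi'_1(\phi^{-1}(w,a)),\dots,\phi'_n(\phi^{-1}(w,a)),a)$ with $(\Omega,\phi)\in\cA$ is $C^{\infty,0}$; by maximality $(\Omega,\phi)\in\cA'$, and by symmetry $\cA=\cA'$.
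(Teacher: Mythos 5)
Your overall architecture coincides with the paper's: charts obtained by restricting $\cQ_{\mathfrak g}$ to a translated complement of $\natural_{x,0}^{-1}(L)$, a fibrewise retraction onto that slice to get openness and the factorisation of $C^{\infty,0}$ functions, injectivity of the slice chart via the periodic bounding/Hausdorff-type argument, and the groupoid compatibility condition via the composition map of Theorem \ref{thm:composition_bisub}. However, the step you yourself label ``the main technical obstacle'' --- producing the retraction $\rho$ by a single formula that is smooth in the $\mathfrak g$-variable and continuous in the $\mathbb{G}^0$-variable uniformly across $t=0$ and $t\neq 0$ --- is exactly the content of the paper's key lemma (Lemma \ref{lem:qjsdflioqjspfd}), and your sketch does not close it. Applying the implicit function theorem separately at $t=0$ (factoring $v=v'\cdot h$ with $v'$ in the slice and $h\in\natural_{x,0}^{-1}(L)$) and then ``transporting'' the factorisation for $t\neq 0$ with the periodic bounding lemma produces a solution fibre by fibre, but gives no control on the joint continuity (and continuity of the fibre derivatives) of that solution as $(L_n,x_n,t_n)\to (L,x,0)$; this is precisely the uniformity you concede is missing, and without it neither the openness and injectivity of the chart domains nor the $C^{\infty,0}$-regularity of the transition maps is established. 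So the proposal has a genuine gap at the heart of the proof.

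The paper eliminates the case split altogether: it works on the tautological vector bundle $E$ over $\Grass(\mathfrak g)\times M\times\R_+$ (fibre $K$ over $(K,y,t)$) and applies the ordinary inverse function theorem to the single smooth map $\kappa(v,l,K,y,t)=(\phi(v,l,y,t),K,y,t)$, with $v$ ranging over the group translate $gS$ of a linear complement $S$ of $\natural_{x,0}^{-1}(L)$ and $\phi$ the composition map of Theorem \ref{thm:composition_bisub}; since $\phi$ is smooth in all variables including $t$, $\kappa$ has a smooth local inverse near $(g,0,L,x,0)$, and the retraction $\psi$ is obtained by composing that inverse with the closed embedding $\Grass(\natural):\mathbb{G}^0\to\Grass(\mathfrak g)\times M\times\R_+$ and projecting to the slice. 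On $\Grass(\natural)(\mathbb{G}^0)$ the tautological fibre consists of elements $l$ with $\exp(\natural_t(l))\cdot y=y$ (resp.\ $l\in\natural_{y,0}^{-1}(L)$ at $t=0$), so $\cQ_{\mathfrak g}$ is preserved automatically; the periodic bounding lemma is not needed for the retraction, only for the injectivity of the slice chart, as in Section \ref{sec:proof_Hasud}. Two smaller remarks: with your vector-space affine slice $v_0+\mathfrak m$ the correct transversality condition at $t=0$ is transversality of $\mathfrak m$ to the tangent space at $v_0$ of the coset $v_0\cdot\natural_{x_0,0}^{-1}(L_0)$, i.e.\ to the image of $\natural_{x_0,0}^{-1}(L_0)$ under the differential of left translation by $v_0$, not to $\natural_{x_0,0}^{-1}(L_0)$ itself (the paper avoids this by using the group translate $gS$); and your uniqueness argument via the algebra of $C^{\infty,0}$ functions is fine in outline and corresponds to the paper's passage to the maximal atlas compatible with the slice charts.
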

\begin{proof}
 Let $(\mathfrak{g},\natural)$ be a graded Lie basis, $x\in M$, $L\in \cG^0_x$, $g\in \mathfrak{g}$, $S\subseteq \mathfrak{g}$ a linear subspace which is complementary to $\natural_{x,0}^{-1}(L)$. We denote by $gS$ the submanifold of $\mathfrak{g}$ of elements of the form $gs$ for $s\in S$. The following lemma gives us charts around the point $(\natural_{x,0}(g)L,x,0)\in \cG\times \{0\}$.
\begin{lem}\label{lem:qjsdflioqjspfd} There exists a $C^{\infty,0}$ map $\psi:\mathfrak{g}\times \mathbb{G}^0\to gS\times \mathbb{G}^0$ defined in a neighbourhood of $(g,L,x,0)$ such that the following diagram commutes \begin{equation}\label{eqn:Diagram_Cont_Fam_Lemma}
 \begin{tikzcd}[column sep = huge]
gS\times \mathbb{G}^0\arrow[r,"\cQ_{\mathfrak{g}|gS\times \mathbb{G}^0}"]&\mathbb{G}\\  \mathfrak{g}\times \mathbb{G}^0\arrow[ur,"\cQ_\mathfrak{g}"']\arrow[u,"\psi"].
\end{tikzcd}
\end{equation}
\end{lem}
\begin{proof}
Let $\phi$ be as in Theorem \ref{thm:composition_bisub}. Let $E$ be the total space of the tautological vector bundle over the space $\Grass(\mathfrak{g})\times M\times \R_+$ whose fiber over $(K,y,t)$ is $K$. Consider the map 
\begin{align*}
\kappa:gS\times E\to \mathfrak{g}\times\Grass(\mathfrak{g})\times M\times \R_+,\ \kappa(v,l,K,y,t)=(\phi(v,l,y,t),K,y,t),
\end{align*}
where $v\in gS,K\in \Grass(\mathfrak{g}), l\in K, (y,t)\in M\times \R_+$.
The differential of $\kappa$ is bijective at $(g,0,L,x,0)$. Hence, there exists $U_1\subseteq gS\times E$ and $U_2\subseteq \mathfrak{g}\times\Grass(\mathfrak{g})\times M\times \R_+$ neighbourhoods of $(g,0,L,x,0)$ and $(g,L,x,0)$ respectively such that $\kappa:U_1\to U_2$ is a diffeomorphism. We define $\psi$ whose domain is $U_2\cap (\mathfrak{g}\times \Grass(\natural)(\mathbb{G}^0))$ by $$\psi(v,\gamma)=(p_1(\kappa^{-1}(v,\Grass(\natural)(\gamma))),\gamma)\quad v\in \mathfrak{g},\gamma\in \mathbb{G}^0,$$ where $p_1:gS\times E\to gS$ is the natural projection, and $\Grass(\natural)$ is the map \eqref{eqn:Grass_natural}.  It is straightforward to check that $\psi$ has the required properties.
\end{proof}
By an argument, like in the beginning of Section \ref{sec:proof_Hasud}, we can show that one can find an open neighbourhood $\Omega\subseteq gS\times \mathbb{G}^0$ of $(g,L,x,0) $ such that $\cQ_{\mathfrak{g}|\Omega}$ is injective. We can further suppose that $\Omega\subseteq \im(\psi)$. It follows that $\cQ_{\mathfrak{g}|\Omega}:\Omega\to \mathbb{G}$ is an open embedding, which by Lemma \ref{lem:qjsdflioqjspfd} has the property that if $f\in C_c(\cQ_{\mathfrak{g}}(\Omega))$, then $f\in C^{\infty,0}(\mathbb{G})$ if and only if $f\circ\cQ_{\mathfrak{g}|\Omega} $ is $C^{\infty,0}$. By taking all charts of $\mathbb{G}$ compatible with the charts we just constructed we obtain charts of $\mathbb{G}$ which satisfy Condition 3,4,6 of Definition \ref{dfn:cont_fam}. To check Condition 5 for the charts $(\cQ_{\mathfrak{g}}(\Omega), \cQ_{\mathfrak{g}|\Omega}^{-1})$, we use the fact that the map $\hat{\phi}$ from \eqref{eqn:m_product} is $C^{\infty,0}$ because $\phi$ is $C^\infty$.
\end{proof}
\section{Lie algebroid}\label{sec:Lie_Alg}
Let $G$ be a continuous family groupoid. Like for Lie groupoids, we denote by $T_sG$ the topological vector bundle over $G$ whose fiber at $\gamma\in G$ is $T_{\gamma}G_{s(\gamma)}$. 
The vector bundle admits a natural $C^{\infty,0}$ structure, i.e., we have local charts for $T_sG$, which come from the charts of $G$ such that the transition functions are $C^{\infty,0}$. 
We denote by $C^{\infty,0}(G,T_sG)$ the space of $C^{\infty,0}$-sections of $T_sG$. We denote by $A(G)$ the restriction of $T_sG$ to $G^0$. Since right multiplication by $\gamma$ is a diffeomorphism, $G_{r(\gamma)}\to G_{s(\gamma)}$, 
it follows that  $T_sG$ is isomorphic (as a topological vector bundle) to $r^*A(G)$. Like for Lie groupoids, if $X\in C(G^0,A(G))$ is a continuous section, then $$\tilde{X}:=X\circ r\in C(G,r^*A(G))=C(G,T_sG)$$ is called the right invariant section of $T_sG$ associated to $X$.
\begin{dfn}\label{dfn:smooth_sections}
A continuous section $X\in C(G^0,A(\mathbb{G}))$ is called $C^{\infty,0}$ if the associated right-invariant section $\tilde{X}$ is $C^{\infty,0}$. We denote the space of $C^{\infty,0}$ sections by $C^{\infty,0}(G^0,A(\mathbb{G}))$.
\end{dfn}
If $X,Y\in C^{\infty,0}(G^0,A(\mathbb{G}))$, then $[\tilde{X},\tilde{Y}]$ is well-defined on each fiber, and it belongs to $C^{\infty,0}(G,T_sG)$. It is also right-invariant. Hence, it is equal to $\tilde{Z}$ for some $Z\in C^{\infty,0}(G^0,A(\mathbb{G}))$. Hence, we have a Lie bracket $$[\cdot,\cdot]:C^{\infty,0}(G^0,A(\mathbb{G}))\times C^{\infty,0}(G^0,A(\mathbb{G}))\to C^{\infty,0}(G^0,A(\mathbb{G})).$$
For a general continuous family groupoid, it is an open question in {\cite{ContFamilyGroupoids}} if there exists any $C^{\infty,0}$ section.

We now describe $A(\mathbb{G})$ and show that $A(\mathbb{G})$ admits many $C^{\infty,0}$ sections. The fiber of $A(\mathbb{G})$ over $(x,t)$ is equal to $T_xM$ and over $(L,x,0)$ is equal to $\frac{\gr(\cF)_x}{L}$, the quotient vector space of $\gr(\cF)_x$ by $L$. In local coordinates, $A(\mathbb{G})$ has a simple description as follows. Let $(V,\natural)$ be a graded basis. Then by \eqref{eqn:Grass_natural}, one has a closed embedding $$\Grass(\natural):\mathbb{G}^0\to \Grass(V)\times M\times \R_+.$$ On the space $\Grass(V)\times M\times \R_+$ there is a natural vector bundle $E$ whose fiber over $(L,x,t)$ is $V/L$. The restriction of $E$ to $\Grass(\natural)(\mathbb{G}^0)$ is $A(\mathbb{G})$. 
\begin{prop}\label{prop:cont_sec_alg}
Let $X\in \cF^i$. The section $\theta_i(X)$ of $A(\mathbb{G})$ defined by \begin{align*}
 \theta_i(X)(x,t)=t^iX(x)\in A(\mathbb{G})_{(x,t)}=T_xM,\ \theta_i(X)(L,x,0)=[X]_{i,x} \mod L
\end{align*}is a $C^{\infty,0}$ section.
\end{prop}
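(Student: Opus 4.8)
The plan is to verify Definition~\ref{dfn:smooth_sections} directly, i.e.\ to show that the right-invariant section $\widetilde{\theta_i(X)}$ on $\mathbb{G}$ is $C^{\infty,0}$. Since this is a local question on $\mathbb{G}$ and, by Theorem~\ref{thm:cont_fam}, transition maps between charts are $C^{\infty,0}$, it suffices to check it on the chart $M\times M\times\R_+^\times$ and, near each point $(gL,x_0,0)\in\cG\times\{0\}$, on a single chart of the form $\cQ_{\mathfrak{g}|\Omega}$ with $\Omega\subseteq gS\times\mathbb{G}^0$, as constructed in the proof of Theorem~\ref{thm:cont_fam}.

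First I record a convenient description of the section. Fix a graded basis $(V,\natural)$ and write $X=\sum_j f_j\,\natural(v_j)$ with $f_j\in C^\infty(M,\R)$ and $v_j\in V^{d_j}$ homogeneous of degree $d_j\le i$ (possible since $X\in\cF^i$). Set $w\colon M\times\R_+\to V$, $w(x,t)=\sum_j f_j(x)\,t^{\,i-d_j}v_j$; this is polynomial in $t$, hence smooth up to $t=0$, and a direct computation gives $\natural_{x,t}(w(x,t))=t^iX(x)$ for $t\neq0$ and $\natural_{x,0}(w(x,0))=[X]_{i,x}$. Under the identification $A(\mathbb{G})=E|_{\Grass(\natural)(\mathbb{G}^0)}$ recorded before the statement (recall $E$ has fibre $V/L$ over $(L,x,t)$), the section $\theta_i(X)$ is precisely the restriction of the smooth section $(L,x,t)\mapsto w(x,t)\bmod L$ of $E$; in particular $\theta_i(X)$ is continuous, so $\widetilde{\theta_i(X)}=\theta_i(X)\circ r$ is continuous on $\mathbb{G}$. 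On the chart $M\times M\times\R_+^\times$ the $s$-fibres are $M\times\{x\}\times\{t\}$ and right translation is trivial in the $M$-coordinate, so $\widetilde{\theta_i(X)}(y,x,t)=t^iX(y)\in T_yM$, which is smooth in $y$ and in $(x,t)$; thus $\widetilde{\theta_i(X)}$ is $C^{\infty,0}$ there.

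Now the work near $(gL,x_0,0)$. Enlarge the defining graded basis by one generator $e$ of degree $i$ with $\natural(e)=X$ before passing to the free nilpotent Lie algebra, obtaining a graded Lie basis $(\mathfrak{g},\natural)$ with $\natural_t(e)=t^iX$ for every $t$; work in the chart $\cQ_{\mathfrak{g}|\Omega}$, $\Omega\subseteq gS\times\mathbb{G}^0$. For $v\in gS$ and a base point with $t\neq0$, the flow of the right-invariant field $\widetilde{\theta_i(X)}$ through $\cQ_{\mathfrak{g}}(v,x,t)=\bigl(\exp(\natural_t(v))\cdot x,\,x,\,t\bigr)$ is $\tau\mapsto\bigl(\exp(\tau t^iX)\cdot\exp(\natural_t(v))\cdot x,\,x,\,t\bigr)$; using $\natural_t(\tau e)=\tau t^iX$ and Theorem~\ref{thm:composition_bisub} this equals $\cQ_{\mathfrak{g}}\bigl(\phi(\tau e,v,x,t),x,t\bigr)$, and composing with the $C^{\infty,0}$ map $\psi\colon\mathfrak{g}\times\mathbb{G}^0\to gS\times\mathbb{G}^0$ of Lemma~\ref{lem:qjsdflioqjspfd} (say $\psi(v,\gamma)=(\psi_1(v,\gamma),\gamma)$, with $\cQ_{\mathfrak{g}|gS\times\mathbb{G}^0}\circ\psi=\cQ_{\mathfrak{g}}$), its expression in the chart coordinates is $\tau\mapsto\psi_1\bigl(\phi(\tau e,v,x,t),x,t\bigr)$. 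Hence, on the dense subset of the chart domain where the base point has $t\neq0$, the local expression of $\widetilde{\theta_i(X)}$ is
\[
(v,x,t)\longmapsto \frac{d}{d\tau}\Big|_{\tau=0}\psi_1\bigl(\phi(\tau e,v,x,t),x,t\bigr)=d_1\psi_1\big|_{(\phi(0,v,x,t),x,t)}\bigl(d_1\phi\big|_{(0,v,x,t)}(e)\bigr),
\]
where $d_1$ denotes the differential in the $\mathfrak{g}$-variable. Since $\phi$ is smooth up to $t=0$ (Theorem~\ref{thm:composition_bisub} and \cite{MohsenMaxHypo}) and $\psi_1$ is $C^{\infty,0}$, the right-hand side is defined and $C^{\infty,0}$ on an entire neighbourhood of $(g,L,x_0,0)$ in $gS\times\mathbb{G}^0$; by continuity of $\widetilde{\theta_i(X)}$ together with the density of $\{t\neq0\}$ in $\mathbb{G}^0$, it equals the local expression of $\widetilde{\theta_i(X)}$ on all of that neighbourhood. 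Therefore $\widetilde{\theta_i(X)}$ is $C^{\infty,0}$ near $(gL,x_0,0)$, hence on all of $\mathbb{G}$, and $\theta_i(X)\in C^{\infty,0}(\mathbb{G}^0,A(\mathbb{G}))$.

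The main obstacle is the third paragraph: identifying the flow of the right-invariant field $\widetilde{\theta_i(X)}$ inside a $\cQ_{\mathfrak{g}}$-chart with an explicit composition of the maps $\phi$ and $\psi_1$. The extra-generator device is exactly what turns $t^iX$ into $\natural_t$ of a fixed element of $\mathfrak{g}$, so that Theorem~\ref{thm:composition_bisub} applies verbatim; beyond that the only input requiring care is the joint smoothness of $\phi$ up to $t=0$, after which the statement reduces to a chain-rule computation and a density argument. One may alternatively argue directly from the identification $A(\mathbb{G})=E|_{\Grass(\natural)(\mathbb{G}^0)}$ as $C^{\infty,0}$-bundles, which is shorter but treats that identification as a black box.
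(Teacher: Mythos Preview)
Your proof is correct and follows essentially the same route as the paper's: both enlarge the graded basis so that $X=\natural(e)$ for some homogeneous $e\in\mathfrak{g}^i$, and then lift $\widetilde{\theta_i(X)}$ through $\cQ_{\mathfrak{g}}$ by differentiating $\phi(\tau e,\cdot,\cdot,\cdot)$ at $\tau=0$, relying on the smoothness of $\phi$ from Theorem~\ref{thm:composition_bisub}. The only cosmetic differences are that the paper works directly with the $C^{\infty,0}$-criterion via $\cQ_{\mathfrak{g}}$ (Definition~\ref{dfn:Cinfinity0}) rather than passing to the charts of Theorem~\ref{thm:cont_fam}, and it verifies the identity $d\cQ_{\mathfrak{g}}(Y)=\widetilde{\theta_i(X)}\circ\cQ_{\mathfrak{g}}$ at $t=0$ directly from Theorem~\ref{thm:composition_bisub}.3 rather than by your density argument; your extra continuity paragraph is what makes that density step legitimate.
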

Before we prove Proposition \ref{prop:cont_sec_alg}, let us describe the right invariant section of $T_s\mathbb{G}$ associated to $\theta_i(X)$, denoted $\tilde{\theta}_i(X)$. If $(x,t)\in M\times \R_+^\times$, then the restriction of $\tilde{\theta}_i(X)$ to $\mathbb{G}_{(x,t)}=M$ is equal to $t^iX$. Let $(L,x,0)\in \mathbb{G}^0$. The restriction of $\tilde{\theta}_i(X)$ to $\mathbb{G}_{(L,x,0)}=\Gr(\cF)_x/L$ has a simple description as follows. The group $\Gr(\cF)_x$ acts on the left on $\Gr(\cF)_x/L$. The derivative of the action gives a Lie algebra morphism $\rho:\gr(\cF)_x\to \cX(\Gr(\cF)_x/L)$. The restriction $\tilde{\theta}_i(X)$ to $\mathbb{G}_{(L,x,0)}$ is $\rho([X]_{i,x})$.
\begin{proof}
It is clear that $\tilde{\theta}_i(X)$ is $C^{\infty,0}$ on $M\times M\times \R_+^\times$. Let $(\mathfrak{g},\natural)$ be a graded Lie basis with $v\in \mathfrak{g}^i$ such that $\natural(v)=X$. It is enough to prove that $\tilde{\theta}_i(X)\circ \cQ_{\mathfrak{g}}$ is a $C^{\infty,0}$ section of $\cQ_{\mathfrak{g}}^*(T_sG)$. The map $\cQ_\mathfrak{g}$ is $C^{\infty,0}$, so if $Y:\mathfrak{g}\times \mathbb{G}^0\to \mathfrak{g}$ is a partially defined $C^{\infty,0}$ section, then $dQ_{\mathfrak{g}}(Y)$ is a $C^{\infty,0}$ section of $\cQ_{\mathfrak{g}}^*(T_sG)$. Hence, it suffices to construct $Y$ such that $dQ_{\mathfrak{g}}(Y)=\tilde{\theta}_i(X)\circ \cQ_{\mathfrak{g}}$.

 Let $\phi$ be as in Theorem \ref{thm:composition_bisub}. We define the partially defined $Y:\mathfrak{g}\times \Grass(\mathfrak{g})\times M\times \R_+\to \mathfrak{g}$ by $$Y(g,L,x,t)=\frac{d}{ds}\Bigr|_{\substack{s=0}}\phi(sv,g,x,t).$$
 Clearly $Y$ is smooth on its domain of definition. We now restrict $Y$ to $\mathfrak{g}\times \Grass(\natural)(\mathbb{G}^0)$. Hence, it becomes a $C^{\infty,0}$ partially defined function $Y:\mathfrak{g}\times \mathbb{G}^0\to \mathfrak{g}$. Finally, the fact that $d\cQ_{\mathfrak{g}}(Y)=\tilde{\theta}_i(X)\circ \cQ_{\mathfrak{g}}$ follows trivially from Theorem \ref{thm:composition_bisub}.2 and \ref{thm:composition_bisub}.3.
\end{proof}
 Let $X\in \cF^i$, $Y\in \cF^j$. It is clear that \begin{equation}\label{eqn:qhsuifdilq}
 \tilde{\theta}_{i+j}([X,Y])=[\tilde{\theta}_i(X),\tilde{\theta}_i(Y)].
\end{equation}Let $x\in M$ and $L\in \cG^0_x$. We denote by $\cF^i/L\subseteq \cX(\Gr(\cF_x)/L)$ the module generated by the restrictions of $\tilde{\theta}_i(X)$ to $\Gr(\cF)_x/L$ as $X$ varies in $\cF^i$. The module $\cF^i/L$ is finitely generated, and $\cF^N/L=\cX(\Gr(\cF)_x/L)$. By \eqref{eqn:qhsuifdilq}, one has $$[\cF^i/L,\cF^j/L]\subseteq \cF^{i+j}/L.$$ Hence, the manifold $\Gr(\cF_x)/L$ is equipped with a weighted sub-Riemannian structure.
\begin{rem}On $\Gr(\cF)_x/\mathfrak{r}_x$, the weighted sub-Riemannian structure $\cF^i/\mathfrak{r}_x$ agrees with the one defined by Bellaïche \cite[Definition 5.15]{BellaicheArt} in the sub-Riemannian case. 
\end{rem}
\section{Debord-Skandalis action}\label{sec:Debord-Skand-Act}
The group $\R_+^\times$ acts on all the objects we have constructed in this article as follows. Let $\lambda\in \R_+^\times$. For every $x\in M$, we define a Lie algebra homomorphism $$\alpha_\lambda:\gr(\cF)_x\to \gr(\cF)_x,\quad \alpha_\lambda\left(\sum_{i=1}^Nv_i\right)=\sum_{i=1}^N\lambda^iv_i,\quad v_i\in \frac{\cF^i}{\cF^{i-1}+I_x\cF^i}.$$
The map $\alpha_\lambda$ is also a Lie group homomorphism. We also define 
$$\alpha_\lambda:\mathbb{G}^0\to \mathbb{G}^0,\quad \alpha_\lambda(x,t)=(x,\lambda^{-1}t),\quad \alpha_\lambda(L,x,0)=(\alpha_\lambda(L),x,0)$$

\begin{prop}
The map $\alpha_\lambda:\mathbb{G}^0\to \mathbb{G}^0$ is well-defined, i.e., if $L\in \cG^0_x$, then $\alpha_\lambda(L)\in \cG^0_x$. Furthermore, $\alpha_\lambda:\mathbb{G}^0\to \mathbb{G}^0$ is a homeomorphism.
\end{prop}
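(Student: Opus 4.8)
The plan is to exhibit $\alpha_\lambda\colon\mathbb{G}^0\to\mathbb{G}^0$ as the restriction, through the embedding $\Grass(\natural)$ for a fixed graded basis $(V,\natural)$, of an explicit homeomorphism of the ambient space $\Grass(V)\times M\times\R_+$; both the well-definedness and the homeomorphism assertion will then fall out together.

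First I would introduce the graded dilation $\delta_\lambda\colon V\to V$, $\delta_\lambda(\sum_{i=1}^N v_i)=\sum_{i=1}^N\lambda^i v_i$ for $v_i\in V^i$, a linear isomorphism with inverse $\delta_{\lambda^{-1}}$, and record two compatibility identities that are immediate from \eqref{eqn:natural_V}: for $t\neq 0$ one has $\natural_{x,\lambda^{-1}t}=\natural_{x,t}\circ\delta_{\lambda^{-1}}$, and at $t=0$ one has $\natural_{x,0}\circ\delta_\lambda=\alpha_\lambda\circ\natural_{x,0}$ (this last because $\natural_{x,0}$, $\delta_\lambda$ and $\alpha_\lambda$ all preserve the grading and act by the scalar $\lambda^i$ in degree $i$). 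Consequently $\ker(\natural_{x,\lambda^{-1}t})=\delta_\lambda(\ker(\natural_{x,t}))$ for $t\neq 0$, and $\delta_\lambda(\natural_{x,0}^{-1}(L))=\natural_{x,0}^{-1}(\alpha_\lambda(L))$ for every linear subspace $L\subseteq\gr(\cF)_x$.

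Next I would consider the map $\beta_\lambda\colon\Grass(V)\times M\times\R_+\to\Grass(V)\times M\times\R_+$, $(A,x,t)\mapsto(\delta_\lambda(A),x,\lambda^{-1}t)$, a homeomorphism (even a diffeomorphism) with inverse $\beta_{\lambda^{-1}}$. The first identity above says exactly that $\beta_\lambda$ carries $\Grass(\natural)(x,t)=(\ker(\natural_{x,t}),x,t)$ to $\Grass(\natural)(x,\lambda^{-1}t)$; hence $\beta_\lambda$ maps $\Grass(\natural)(M\times\R_+^\times)$ bijectively onto itself. Being a homeomorphism of the ambient space, $\beta_\lambda$ then preserves the closure of this subset. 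By Proposition \ref{thm:top_well_defined}.1 the image of $\Grass(\natural)$ is closed, so $\Grass(\natural)$ is a closed embedding; therefore $\overline{\Grass(\natural)(M\times\R_+^\times)}=\Grass(\natural)(\overline{M\times\R_+^\times})=\Grass(\natural)(\mathbb{G}^0)$ by Definition \ref{dfn:bG0}, and we conclude that $\beta_\lambda$ maps $\Grass(\natural)(\mathbb{G}^0)$ bijectively onto itself.

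Finally, evaluating at a boundary point, $\beta_\lambda(\Grass(\natural)(L,x,0))=\beta_\lambda(\natural_{x,0}^{-1}(L),x,0)=(\natural_{x,0}^{-1}(\alpha_\lambda(L)),x,0)$ by the second identity. Since the left-hand side lies in $\Grass(\natural)(\mathbb{G}^0)$ by the previous step, and $\Grass(\natural)$ is injective with the image described in \eqref{eqn:Grass_natural}, this forces $(\alpha_\lambda(L),x,0)\in\mathbb{G}^0$, i.e. $\alpha_\lambda(L)\in\cG^0_x$ — the well-definedness statement. The same computation shows $\Grass(\natural)\circ\alpha_\lambda=\beta_\lambda\circ\Grass(\natural)$ on all of $\mathbb{G}^0$, so $\alpha_\lambda=\Grass(\natural)^{-1}\circ\beta_\lambda\circ\Grass(\natural)$ is a composite of homeomorphisms onto the relevant subspaces and hence a homeomorphism of $\mathbb{G}^0$, with inverse $\alpha_{\lambda^{-1}}$. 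I expect no genuine obstacle here; the only points needing care are keeping the powers of $\lambda$ in $\delta_\lambda$, $\alpha_\lambda$ and $t\mapsto\lambda^{-1}t$ mutually consistent, and the bookkeeping that a closure computed inside the (closed) image of $\Grass(\natural)$ coincides with the one computed in $\Grass(V)\times M\times\R_+$.
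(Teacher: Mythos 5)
Your proposal is correct and follows essentially the same route as the paper: the paper also conjugates $\alpha_\lambda$ through the embedding $\Grass(\natural)$ by the ambient homeomorphism $(A,x,t)\mapsto(\alpha_\lambda(A),x,\lambda^{-1}t)$ of $\Grass(V)\times M\times\R_+$ and invokes Proposition \ref{thm:top_well_defined}.1. Your write-up merely makes explicit the compatibility identities and the closure bookkeeping that the paper leaves implicit.
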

\begin{proof}
Let $(V,\natural)$ be a graded basis. We define $$\alpha_\lambda:V\to V,\quad \alpha_\lambda\left(\sum_{i=1}^Nv_i\right)=\sum_{i=1}^N\lambda^iv_i,\quad v_i\in V^i.$$We extend this map to $\Grass(V)\times M\times \R_+$ by  
$$\alpha_\lambda:\Grass(V)\times M\times \R_+\to \Grass(V)\times M\times \R_+, \alpha_\lambda(L,x,t)=(\alpha_\lambda(L),x,\lambda^{-1}t).$$The map $\alpha_\lambda:\Grass(V)\times M\times \R_+\to \Grass(V)\times M\times \R_+$ is clearly a homeomorphism. The following diagram commutes 
$$\begin{tikzcd}[column sep=large]\mathbb{G}^0\arrow[r,"\Grass(\natural)"]\arrow[d,"\alpha_\lambda"]&\Grass(V)\times M\times \R_+\arrow[d,"\alpha_\lambda"]\\\mathbb{G}^0\arrow[r,"\Grass(\natural)"]&\Grass(V)\times M\times \R_+
\end{tikzcd}$$
By Proposition \ref{thm:top_well_defined}.1, the result follows.
\end{proof}
We also let $\R_+^\times$ act on $\mathbb{G}$ by $$\alpha_\lambda:\mathbb{G}\to \mathbb{G},\quad \alpha_\lambda(y,x,t)=(y,x,\lambda^{-1}t),\quad \alpha_\lambda(gL,x,0)=(\alpha_\lambda(gL),x,0).$$

\begin{prop}
The map $\alpha_\lambda:\mathbb{G}\to \mathbb{G}$ is a homeomorphism.
\end{prop}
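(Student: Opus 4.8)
The plan is to reduce everything to the continuity criterion of Theorem \ref{thm:top_on_bG}.1, together with the fact, already established in the preceding proposition, that $\alpha_\lambda$ is a homeomorphism of $\mathbb{G}^0$. First I would record that $\alpha_\lambda$ is a well-defined bijection of $\mathbb{G}$: on $M\times M\times\R_+^\times$ it is the homeomorphism $(y,x,t)\mapsto(y,x,\lambda^{-1}t)$, while on $\cG\times\{0\}$ one has $\alpha_\lambda(gL)=\alpha_\lambda(g)\alpha_\lambda(L)$ since $\alpha_\lambda$ is a Lie group homomorphism of $\gr(\cF)_x$, with $\alpha_\lambda(g)\in\gr(\cF)_x$ and $\alpha_\lambda(L)\in\cG^0_x$ by the preceding proposition; its inverse is clearly $\alpha_{\lambda^{-1}}$. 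Hence it suffices to prove that $\alpha_\lambda$ is continuous, as the same argument applied with $\lambda^{-1}$ then yields continuity of the inverse.

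To prove continuity, fix a graded basis $(V,\natural)$. By Theorem \ref{thm:top_on_bG}.1 it is enough to check that the restriction of $\alpha_\lambda$ to $M\times M\times\R_+^\times$ is continuous, which is clear, and that $\alpha_\lambda\circ\cQ_V$ is continuous. The mechanism is the scaling identity $\natural_{\lambda^{-1}t}\circ\alpha_\lambda=\natural_t$ on $V$, valid for all $t\in\R_+$: for $v_i\in V^i$ one computes $\natural_{\lambda^{-1}t}(\lambda^i v_i)=(\lambda^{-1}t)^i\lambda^i\natural(v_i)=t^i\natural(v_i)$, and specializing to $t=0$ gives $\natural_{x,0}\circ\alpha_\lambda=\alpha_\lambda\circ\natural_{x,0}$ since $\natural_{x,0}$ preserves the grading. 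Define $\beta_\lambda\colon V\times\mathbb{G}^0\to V\times\mathbb{G}^0$ by $\beta_\lambda(v,p)=(\alpha_\lambda(v),\alpha_\lambda(p))$, where $\alpha_\lambda$ acts on $V$ by $\sum_i v_i\mapsto\sum_i\lambda^i v_i$ and on $\mathbb{G}^0$ as the homeomorphism of the preceding proposition; $\beta_\lambda$ is continuous, being a product of two continuous maps. Using the scaling identity I would verify that $\beta_\lambda$ restricts to a homeomorphism of $\dom(\cQ_V)$: a point $(v,x,t)$ with $t\neq 0$ lies in $\dom(\cQ_V)$ if and only if $\exp(\natural_t(v))\cdot x=\exp(\natural_{\lambda^{-1}t}(\alpha_\lambda v))\cdot x$ is defined, i.e. if and only if $\beta_\lambda(v,x,t)\in\dom(\cQ_V)$, and $V\times\cG^0\times\{0\}$ is sent to itself.

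Next I would check that the square $\alpha_\lambda\circ\cQ_V=\cQ_V\circ\beta_\lambda$ commutes on $\dom(\cQ_V)$. For $t\neq 0$ both sides send $(v,x,t)$ to $(\exp(\natural_t(v))\cdot x,\,x,\,\lambda^{-1}t)$, using the scaling identity on the right. For $t=0$ the left side sends $(v,L,x,0)$ to $(\alpha_\lambda(\natural_{x,0}(v))\,\alpha_\lambda(L),x,0)$ since $\alpha_\lambda$ is a group homomorphism of $\gr(\cF)_x$, while the right side sends it to $(\natural_{x,0}(\alpha_\lambda v)\,\alpha_\lambda(L),x,0)$, and the two agree by $\natural_{x,0}\circ\alpha_\lambda=\alpha_\lambda\circ\natural_{x,0}$. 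Since $\cQ_V$ and $\beta_\lambda$ are continuous, so is $\alpha_\lambda\circ\cQ_V=\cQ_V\circ\beta_\lambda$; therefore, for every open $U\subseteq\mathbb{G}$, the set $\cQ_V^{-1}(\alpha_\lambda^{-1}(U))=(\alpha_\lambda\circ\cQ_V)^{-1}(U)$ is open and $\alpha_\lambda^{-1}(U)\cap(M\times M\times\R_+^\times)$ is open, so $\alpha_\lambda^{-1}(U)$ is open by Theorem \ref{thm:top_on_bG}.1. Applying this to $\lambda^{-1}$ as well shows $\alpha_\lambda$ is a homeomorphism. There is no serious obstacle here: the only points demanding care are the bookkeeping for partially defined maps — that $\beta_\lambda$ genuinely preserves $\dom(\cQ_V)$, so that the commuting square is an honest identity of partially defined functions — and getting the exponents right in the scaling identity $\natural_{\lambda^{-1}t}\circ\alpha_\lambda=\natural_t$.
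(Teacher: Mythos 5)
Your proposal is correct and follows essentially the same route as the paper: the paper also defines the map $(v,\gamma)\mapsto(\alpha_\lambda(v),\alpha_\lambda(\gamma))$ on $V\times\mathbb{G}^0$ and concludes from the commuting square with $\cQ_V$ (your $\beta_\lambda$ and the scaling identity $\natural_{\lambda^{-1}t}\circ\alpha_\lambda=\natural_t$ are exactly the details it leaves implicit). Your verification of the domain bookkeeping and the appeal to Theorem \ref{thm:top_on_bG}.1, applied to both $\lambda$ and $\lambda^{-1}$, is the intended argument.
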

\begin{proof}
Let \begin{equation}\label{eqn:action_VGO}
 \alpha_\lambda:V\times \mathbb{G}^0\to V\times \mathbb{G}^0,\quad \alpha_\lambda(v,\gamma)=(\alpha_\lambda(v),\alpha_\lambda(\gamma)),\quad v\in V,\gamma\in \mathbb{G}^0.
\end{equation} One easily verifies that the diagram 
$$\begin{tikzcd} V\times\mathbb{G}^0\arrow[r,"\cQ_V"]\arrow[d,"\alpha_\lambda"]&\mathbb{G}\arrow[d,"\alpha_\lambda"]\\ V\times\mathbb{G}^0\arrow[r,"\cQ_V"]&\mathbb{G}
\end{tikzcd}$$ commutes. The result follows.
\end{proof}
It is straightforward to check that the map $\alpha_\lambda:\mathbb{G}\to \mathbb{G}$ is a Lie groupoid automorphism.
\begin{dfn}
Let $(V,\natural)$ be a graded basis. A quasi-norm is a continuous function  $\norml{\cdot}:V\to \R_+$ such that \begin{enumerate}
\item $\norml{\alpha_\lambda(v)}=\lambda \norml{v}$ for all $\lambda\in \R_+^\times$ and $v\in V$.
\item $\norml{v}=0$ if and only if $v=0$.
\end{enumerate} 
\end{dfn}
Let $(V,\natural)$ be a graded basis and $\norml{\cdot}$ a quasi-norm. If $(y,x,t)\in M\times M\in \R_+^\times$, then we define \begin{equation}\label{eqn:1Norml}
 \norml{(y,x,t)}:=\inf \{\norml{v}:y=\exp(\natural_t(v))\cdot x\}.
\end{equation} There is no reason in general for $y=\exp(\natural_t(v))\cdot x$ to admit a solution in $V$, so $\norml{(y,x,t)}$ can be infinite. It is finite if $y$ and $x$ are close to each other. If $(gL,x,0)\in \cG\times \{0\}$, we also define 
\begin{equation}\label{eqn:2Norml}
\norml{(gL,x,0)}=\inf\{\norml{v}:v\in \natural_{x,0}^{-1}(gL)\}.\end{equation} 
We have thus defined a map $$\norml{\cdot}:\mathbb{G}\to [0,+\infty]$$ which is $1$-homogeneous, i.e., $$\norml{\alpha_\lambda(\gamma)}=\lambda\norml{\gamma},\quad \lambda\in \Rpt,\gamma\in \mathbb{G}.$$
We remark that if $\gamma\in \mathbb{G}$, then $\norml{\gamma}=0$ if and only if $\gamma\in \mathbb{G}^0$.

There is no reason for the map $\norml{\cdot}$ to be continuous on $\mathbb{G}$. Nevertheless, we prove that it is continuous near $\mathbb{G}^0$.
\begin{theorem}\label{thm:cont_norm}
Let $\mathbb{U}$ as in Proposition \ref{rem:cQ_V_quotient}. Then the map $\norml{\cdot}:\cQ_V(\mathbb{U})\to [0,+\infty[$ is continuous.
\end{theorem}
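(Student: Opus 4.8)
The plan is to deduce continuity from the fact that $\cQ_{V|\mathbb{U}}$ is a quotient map, together with two semicontinuity estimates. First I would record the elementary fact that every quasi-norm $\norml{\cdot}$ on $V$ is coercive: fixing an auxiliary norm and writing $v=\sum_iv_i$ with $v_i\in V^i$, the homogeneous gauge $|v|:=\max_i\norm{v_i}^{1/i}$ satisfies $|\alpha_\lambda v|=\lambda|v|$ and blows up together with $\norm{v}$, while $\norml{\cdot}$ is continuous and strictly positive on the compact set $\{|v|=1\}$, so $\norml{v}\ge c\,|v|$ for some $c>0$ and the sublevel sets $\{\norml{\cdot}\le R\}$ are compact. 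Set $N:=\norml{\cdot}\circ\cQ_{V|\mathbb{U}}\colon\mathbb{U}\to[0,+\infty[$. Since $(v,\delta)\in\mathbb{U}\subseteq\dom(\cQ_V)$, taking $w=v$ in \eqref{eqn:1Norml}--\eqref{eqn:2Norml} gives $N(v,\delta)\le\norml{v}<\infty$, and unwinding the two definitions yields the uniform description $N(v,\delta)=\inf\{\norml{w}:(w,\delta)\in\dom(\cQ_V),\ \cQ_V(w,\delta)=\cQ_V(v,\delta)\}$. By Proposition~\ref{rem:cQ_V_quotient} the map $\cQ_{V|\mathbb{U}}$ is a continuous open surjection onto $\cQ_V(\mathbb{U})$, hence a quotient map, so it suffices to prove that $N$ is continuous on $\mathbb{U}$.

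\textbf{Lower semicontinuity.} Given $(v_n,\delta_n)\to(v_0,\delta_0)$ in $\mathbb{U}$ with $\liminf N(v_n,\delta_n)=\ell$, pick along a subsequence competitors $w_n$ with $(w_n,\delta_n)\in\dom(\cQ_V)$, $\cQ_V(w_n,\delta_n)=\cQ_V(v_n,\delta_n)$ and $\norml{w_n}\le N(v_n,\delta_n)+1/n\to\ell$. Coercivity confines the $w_n$ to a compact set, so after passing to a further subsequence $w_n\to w_\infty$. I would then argue $(w_\infty,\delta_0)\in\dom(\cQ_V)$ — this is automatic when $\delta_0\in\mathcal{G}^0\times\{0\}$ because $V\times\mathcal{G}^0\times\{0\}\subseteq\dom(\cQ_V)$, and for $\delta_0=(x_0,t_0)$ it holds after the time rescaling used below, where the relevant flows are near the identity and cannot escape — and conclude from continuity of $\cQ_V$ on $\dom(\cQ_V)$ that $\cQ_V(w_\infty,\delta_0)=\lim\cQ_V(v_n,\delta_n)=\cQ_V(v_0,\delta_0)$, whence $N(v_0,\delta_0)\le\norml{w_\infty}=\ell$.

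\textbf{Upper semicontinuity.} Fix $(v_0,\delta_0)\in\mathbb{U}$ and $\varepsilon>0$, and choose $w_0\in V$ with $(w_0,\delta_0)\in\dom(\cQ_V)$, $\cQ_V(w_0,\delta_0)=\cQ_V(v_0,\delta_0)$ and $\norml{w_0}<N(v_0,\delta_0)+\varepsilon$. It is enough to produce a base-point-preserving continuous map $g$ into $\dom(\cQ_V)$, defined near $(v_0,\delta_0)$ in $\mathbb{U}$, with $g(v_0,\delta_0)=(w_0,\delta_0)$ and $\cQ_V\circ g=\cQ_V$: then $N(v,\delta)=\norml{\cQ_V(g(v,\delta))}\le\norml{\mathrm{pr}_V g(v,\delta)}\to\norml{w_0}<N(v_0,\delta_0)+\varepsilon$. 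When $\delta_0=(L,x_0,0)$, such a $g$ is exactly the straightening map $\hat\phi$ of Lemma~\ref{lem:sqjdfjlq}, applied with $v'=v_0$ and $v=w_0$. When $\delta_0=(x_0,t_0)$ with $t_0\ne0$ I would invoke the Debord--Skandalis action: for $\lambda\in\R_+^\times$ the homeomorphism $\alpha_\lambda\colon\mathbb{G}\to\mathbb{G}$ satisfies $\cQ_V\circ\alpha_\lambda=\alpha_\lambda\circ\cQ_V$ and $\norml{\alpha_\lambda\gamma}=\lambda\norml{\gamma}$, and $\alpha_\lambda\mathbb{U}$ again satisfies the hypotheses of Proposition~\ref{rem:cQ_V_quotient}; as $\alpha_\lambda(y_0,x_0,t_0)=(y_0,x_0,\lambda^{-1}t_0)$ and continuity is being proved for every such open set, we may replace $\mathbb{U}$ by $\alpha_\lambda\mathbb{U}$ and thus assume $t_0$ as small as we like. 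Then $w_0$ lies in a fixed compact set, and since $\natural_{t_0}=\natural\circ\alpha_{t_0}$ the vector $\alpha_{t_0}w_0$ is arbitrarily close to $0$; because the differential of $v\mapsto\exp(\natural(v))\cdot x$ at $v=0$ is $v\mapsto\natural(v)(x)$, which is surjective as $\natural(V)$ generates $\cX(M)$ as a $C^\infty(M,\R)$-module, this exponential map is a submersion near $0$, hence $\cQ_V\colon(w,x,t)\mapsto(\exp(\natural_t(w))\cdot x,x,t)$ is a submersion at $(w_0,x_0,t_0)$ and has a smooth local section $\tau$ with $\tau(\cQ_V(v_0,x_0,t_0))=(w_0,x_0,t_0)$; take $g=\tau\circ\cQ_V$.

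\textbf{The main obstacle.} The entire difficulty sits in the interior upper bound: the near-optimal competitor $w_0$ need neither lie in $\mathbb{U}$ nor at a submersion point of $\cQ_V$, and for times of order one the exponential map of vector fields genuinely fails to be a submersion away from small vectors — exactly the phenomenon flagged before Proposition~\ref{rem:cQ_V_quotient}. The rescaling device above, exploiting that $\cQ_V$ intertwines the Debord--Skandalis action and that $\norml{\cdot}$ is $1$-homogeneous in order to drag $t_0$ (and with it $\alpha_{t_0}w_0$) into the region where the exponential is a submersion, is what circumvents this; the boundary case then reduces literally to Lemma~\ref{lem:sqjdfjlq}, while the lower bound and the finiteness of $\norml{\cdot}$ on $\cQ_V(\mathbb{U})$ are soft, resting only on coercivity of the quasi-norm and continuity of $\cQ_V$.
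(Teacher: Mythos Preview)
Your lower-semicontinuity argument and the reduction to the quotient map are fine, and at boundary points your use of Lemma~\ref{lem:sqjdfjlq} is a legitimate alternative to what the paper does. The paper's route there is actually more direct: it simply observes that a minimizer $w$ for $\norml{(gL,x,0)}$ automatically satisfies $(w,L,x,0)\in V\times\cG^0\times\{0\}\subseteq\mathbb{U}$, and then uses the \emph{openness} of $\cQ_{V|\mathbb{U}}$ from Proposition~\ref{rem:cQ_V_quotient} to produce $w_n\to w$ with $\cQ_V(w_n,x_n,t_n)=(y_n,x_n,t_n)$; no auxiliary straightening map is needed. Your detour through Lemma~\ref{lem:sqjdfjlq} works, but it reproves what openness already gives.

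There is, however, a genuine gap in your interior upper-semicontinuity step. The Debord--Skandalis rescaling does not move the quantity that matters. If you replace $(v_0,x_0,t_0)$ by $(\alpha_\lambda v_0,x_0,\lambda^{-1}t_0)$, the near-minimizer becomes $\alpha_\lambda w_0$, and the ``physical'' flow vector is
\[
\alpha_{\lambda^{-1}t_0}(\alpha_\lambda w_0)=\alpha_{t_0}(w_0),
\]
which is \emph{unchanged}. So your assertion that ``$w_0$ lies in a fixed compact set'' and hence ``$\alpha_{t_0}w_0$ is arbitrarily close to $0$'' is false: after rescaling, $\norml{w_0}$ scales like $\lambda$ while $t_0$ scales like $\lambda^{-1}$, and their product --- which controls whether you land in the submersion region of $v\mapsto\exp(\natural(v))\cdot x$ --- stays put. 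The rescaling reduction is therefore circular: you are reducing the statement ``for all $\mathbb{U}$'' at one point to the same statement at the rescaled point for the rescaled $\mathbb{U}$, with no net gain. The same issue contaminates your remark that $(w_\infty,\delta_0)\in\dom(\cQ_V)$ ``holds after the time rescaling'' in the interior lower-semicontinuity case.

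What actually makes the interior case work is not rescaling but the following observation, implicit in the paper's ``easily deduce'' and in the specific $\mathbb{U}$ built in the proof of Proposition~\ref{rem:cQ_V_quotient}: one may take $U=\{(w,x):\norml{w}<\epsilon(x)\}$ using the \emph{quasi}-norm itself, so that $(v,x,t)\in\mathbb{U}$ iff $t\norml{v}<\epsilon(x)$. Then for any $(y_0,x_0,t_0)\in\cQ_V(\mathbb{U})$ the minimizer $v_0$ satisfies $t_0\norml{v_0}\le t_0\norml{u_0}<\epsilon(x_0)$ for some $(u_0,x_0,t_0)\in\mathbb{U}$, hence $(v_0,x_0,t_0)\in\mathbb{U}$ itself sits at a submersion point and a local section exists. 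Continuity at interior points of a general $\mathbb{U}$ then follows because continuity is local and $\norml{\cdot}$ does not depend on $\mathbb{U}$.
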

\begin{proof}
The infimum in \eqref{eqn:1Norml} and \eqref{eqn:2Norml} is attained. One can then easily deduce the continuity of $\norml{\cdot}$ on $\mathbb{U}\cap (M\times M\times \Rpt)$ from Proposition \ref{rem:cQ_V_quotient}.2. Let $(y_n,x_n,t_n)\in M\times M\times \Rpt$ a sequence converging to $(gL,x,0)$. We will show that $\norml{(y_n,x_n,t_n)}\to \norml{(gL,x,0)}$. Let $v_n\in V$ such that $y_n=\exp(\natural_{t_n}(v_n))\cdot x_n$ and $\norml{v_n}=\norml{(y_n,x_n,t_n)}$. Let $w\in gL$ such that $\norml{w}=\norml{(gL,x,0)}$. By Proposition \ref{rem:cQ_V_quotient}, we deduce that there exists $w_n\in V$ which converge to $w\in gL$ such that $y_n=\exp(\natural_{t_n}(w_n))\cdot x_n$. Hence, $\norml{v_n}\leq \norml{w_n}$ and $\norml{v_n}$ is bounded. By passing to a subsequence, we can suppose that $v_n\to v$. Then $\norml{v}\leq \norml{w}$. Again using Proposition \ref{rem:cQ_V_quotient}, we deduce that $v\in gL$. Hence, $\norml{v}=\norml{w}$. So $\norml{(y_n,x_n,t_n)}\to \norml{(gL,x,0)}$. It remains to show that if $(g_nL_n,x_n,0)$ converge to $(gL,x,0)$, then $\norml{(g_nL_n,x_n,0)}\to \norml{(gL,x,0)}$. This is proven like before and is left to the reader.
\end{proof}
\section{Blowup space using the sub-Riemannian metric}\label{sec:blowup_Riem}		
We denote by $\mathbb{P}(\mathbb{G})$ the quotient of $\mathbb{G}\backslash \mathbb{G}^0$ by the $\R_+^\times$-action equipped with the quotient topology. Let $\mathbb{P}(\cG)$ be the quotient of $\cG\backslash \cG^0$ by the $\R_+^\times$-action. 
Then one has $$\mathbb{P}(\mathbb{G})=(M\times M\backslash \Delta_M)\sqcup \mathbb{P}(\cG),$$ where $\Delta_M\subseteq M\times M$ is the diagonal. One can think of $\mathbb{P}(\mathbb{G})$ as a kind of topological blowup of $M\times M$ along the diagonal which 
respects the weighted sub-Riemannian structure. There is a natural map $\pi:\mathbb{P}(\mathbb{G})\to M\times M$ which sends the class of $(y,x,t)$ to $(y,x)$ and the class of $(gL,x,0)$ to $(x,x)$.
\begin{theorem}\label{thm:blwup}
The space $\mathbb{P}(\mathbb{G})$ is a second countable locally compact metrizable space. Furthermore, the map $\pi$ is a continuous surjective proper map.
\end{theorem}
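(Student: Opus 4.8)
The plan is to deduce everything from one assertion: that the $\R_+^\times$-action on $\mathbb{G}\setminus\mathbb{G}^0$ is \emph{proper} (indeed free). The essential tool is the quasi-norm $\norml{\cdot}$ of Section~\ref{sec:Debord-Skand-Act}. First I would fix a graded basis $(V,\natural)$ and an open set $\mathbb{U}\subseteq V\times\mathbb{G}^0$ as in Proposition~\ref{rem:cQ_V_quotient}, so that $\cQ_V(\mathbb{U})$ is open in $\mathbb{G}$. Since $\cQ_V(v,L,x,0)=(\natural_{x,0}(v)L,x,0)$ and $\natural_{x,0}$ is onto $\gr(\cF)_x$, the set $\cQ_V(\mathbb{U})$ contains $\cG\times\{0\}$, and it contains $\mathbb{G}^0$. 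Put $W:=\bigcup_{\lambda\in\R_+^\times}\alpha_\lambda(\cQ_V(\mathbb{U}))$, an open $\R_+^\times$-invariant neighbourhood of $\mathbb{G}^0$ still containing $\cG\times\{0\}$. By Theorem~\ref{thm:cont_norm} the function $\norml{\cdot}$ is continuous and finite on $\cQ_V(\mathbb{U})$; being $1$-homogeneous it is continuous and finite on each $\alpha_\lambda(\cQ_V(\mathbb{U}))$, hence on $W$, and it vanishes exactly on $\mathbb{G}^0$.

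\textbf{Properness of the action.} With $W_1:=\{\gamma\in W:\norml{\gamma}=1\}\subseteq W\setminus\mathbb{G}^0$, the map $\gamma\mapsto(\alpha_{\norml{\gamma}^{-1}}(\gamma),\norml{\gamma})$ is a homeomorphism $W\setminus\mathbb{G}^0\xrightarrow{\ \sim\ }W_1\times\R_+^\times$ conjugating the $\R_+^\times$-action into translation on the second factor; hence $\R_+^\times$ acts properly and freely on $W\setminus\mathbb{G}^0$. On $(M\times M\setminus\Delta_M)\times\R_+^\times$ the action is $\alpha_\lambda(y,x,t)=(y,x,\lambda^{-1}t)$, again proper and free. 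These two sets are open, $\R_+^\times$-invariant, and cover $\mathbb{G}\setminus\mathbb{G}^0$, since the complement of $(M\times M\setminus\Delta_M)\times\R_+^\times$ in $\mathbb{G}\setminus\mathbb{G}^0$ is $(\cG\setminus\cG^0)\times\{0\}\subseteq\cG\times\{0\}\subseteq W$. Properness of a group action is inherited from an invariant open cover: for compact $C\subseteq X\times X$ the second projection of $C$ meets only finitely many members $X_1,\dots,X_k$ of the cover, so the orbit map $\Phi$ satisfies $\Phi^{-1}(C)\subseteq\bigcup_j\Phi_j^{-1}\bigl(C\cap(X_j\times X_j)\bigr)$ with $\Phi_j$ the restricted action, which is compact. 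Therefore $\R_+^\times$ acts properly and freely on $\mathbb{G}\setminus\mathbb{G}^0$.

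\textbf{The quotient and the map $\pi$.} Since $\mathbb{G}\setminus\mathbb{G}^0$ is locally compact Hausdorff and second countable (Theorem~\ref{thm:top_on_bG}) and the action is proper, $\mathbb{P}(\mathbb{G})$ is Hausdorff (the orbit relation is closed), the quotient map $q$ is open so $\mathbb{P}(\mathbb{G})$ is second countable, and $\mathbb{P}(\mathbb{G})$ is locally compact as an open Hausdorff image of a locally compact space; by Urysohn's metrization theorem it is metrizable. The map $\pi_0:\mathbb{G}\to M\times M$ equals $(\beta\circ r,\beta\circ s)$, where $r,s$ are continuous (Theorem~\ref{thm:cont_structure_maps}) and $\beta:\mathbb{G}^0\to M$ is the continuous projection coming from $\mathbb{G}^0\to M\times\R_+$ (Proposition~\ref{thm:top_well_defined}); it is continuous and constant on $\R_+^\times$-orbits, so it descends through $q$ to the continuous map $\pi$. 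Surjectivity is immediate: $(y,x)$ with $y\neq x$ is the image of $[(y,x,1)]$, and $(x,x)$ is the image of $[(g\mathfrak{r}_x,x,0)]$ for any $g\in\gr(\cF)_x\setminus\mathfrak{r}_x$ (the case $\dim M=0$ being trivial).

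\textbf{Properness of $\pi$ — the main point.} Given $K\subseteq M\times M$ compact, I would show $\pi^{-1}(K)$ is sequentially compact, hence compact. Take a sequence in $\pi^{-1}(K)$; using that the two pieces of $\mathbb{P}(\mathbb{G})$ are $\R_+^\times$-invariant, pass to a subsequence lying in a single piece, then choose convergent subsequences of base points in $K$. If it comes from $(y_n,x_n)\in K\setminus\Delta_M$ with $(y_n,x_n)\to(y,x)$ and $y\neq x$, its limit is $[(y,x,1)]\in\pi^{-1}(K)$. Otherwise — boundary points $(g_nL_n,x_n,0)$, or interior points with $(y_n,x_n)\to(x,x)$ — the relevant orbits meet $W$ (the boundary part lies in $\cG\times\{0\}\subseteq W$, and $(y_n,x_n,1)\in W$ for large $n$ since $(x,x,1)\in\mathbb{G}^0\subseteq W$ with $W$ open), so I may replace the representative by $\gamma_n'$ in the same orbit with $\norml{\gamma_n'}=1$, whose base point still tends to $x$ and whose $t$-coordinate (in the interior case) tends to $0$. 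Choosing a minimizing vector $v_n$ for $\norml{\gamma_n'}$ (so $\norml{v_n}=1$, hence $(v_n)$ bounded) and passing to a further subsequence along which $v_n\to v$ and the relevant kernels $\ker(\natural_{x_n,t_n})$, resp.\ subspaces $\natural_{x_n,0}^{-1}(\cdot)$, converge in the Grassmannian, Proposition~\ref{prop:Conv} together with Proposition~\ref{thm:top_well_defined}.1 gives $\gamma_n'\to\gamma'$ in $\mathbb{G}$ with $\gamma'\in\cG\times\{0\}\subseteq W$; by continuity of $\norml{\cdot}$ on $W$ one gets $\norml{\gamma'}=1$, so $\gamma'\in\mathbb{G}\setminus\mathbb{G}^0$ and $\pi(\gamma')=(x,x)\in K$. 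Hence $[\gamma_n']\to[\gamma']\in\pi^{-1}(K)$. I expect this step to be the hard part: one must show that interior elements collapsing onto the diagonal can be rescaled — via the quasi-norm, which is only available on the invariant neighbourhood $W$ of $\mathbb{G}^0$ — so that they converge to a genuine element of $\cG\setminus\cG^0$ instead of disappearing into $\mathbb{G}^0$, and the properness of the $\R_+^\times$-action itself rests on exactly the same rescaling trivialisation near $\mathbb{G}^0$.
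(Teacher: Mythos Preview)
Your overall strategy --- deduce everything from properness of the $\R_+^\times$-action, and use the quasi-norm to control both the action and the map $\pi$ --- is sound, and your argument for properness of $\pi$ is essentially the paper's. However, your proof that the action is proper has a genuine gap.

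The assertion ``properness of a group action is inherited from an invariant open cover'' is \emph{false} in general. Take $\R$ acting on $X=\R^2\setminus\{0\}$ by $t\cdot(x,y)=(e^t x,e^{-t}y)$, with the invariant open cover $X_1=\{x\neq0\}$, $X_2=\{y\neq0\}$. The action is proper on each $X_i$ (the nonvanishing coordinate recovers $t$), yet not on $X$: $(1/n,1)\to(0,1)$ and $(\log n)\cdot(1/n,1)=(1,1/n)\to(1,0)$ while $\log n\to\infty$. Concretely, in your displayed inclusion $\Phi^{-1}(C)\subseteq\bigcup_j\Phi_j^{-1}\bigl(C\cap(X_j\times X_j)\bigr)$ the sets $C\cap(X_j\times X_j)$ are only closed in $X_j\times X_j$, not compact, so properness of $\Phi_j$ says nothing.

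The fix is short and is exactly what the paper does for this step (without invoking the quasi-norm): rule out the ``mixed'' case directly. If $\gamma_n\to\gamma$ and $\alpha_{\lambda_n}(\gamma_n)\to\gamma'$ with, say, $\gamma\in(\cG\setminus\cG^0)\times\{0\}$ and $\gamma'\in(M\times M\setminus\Delta_M)\times\R_+^\times$, then continuity of $r,s$ (Theorem~\ref{thm:cont_structure_maps}) applied to both limits forces the $M$-coordinates $x_n,y_n$ of $\gamma_n$ to converge simultaneously to a single point (from $\gamma$) and to two distinct points (from $\gamma'$), a contradiction; the symmetric case is identical. Once the mixed case is excluded, your quasi-norm trivialisation handles $\gamma,\gamma'\in W\setminus\mathbb{G}^0$, and the $t$-coordinate handles $\gamma,\gamma'\in(M\times M\setminus\Delta_M)\times\R_+^\times$. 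The paper instead argues properness of the action entirely by such sequential contradictions via Proposition~\ref{prop:Conv}; your slice $W_1\times\R_+^\times$ is a clean alternative once the gap is patched.
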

\begin{proof}We will show that the $\R_+^\times$-action on $\mathbb{G}\backslash \mathbb{G}^0$ is proper. 
The $\R_+^\times$-action on $\cG\backslash \cG^0$ is easily seen to be proper. Take a sequence $\lambda_n\in \Rpt$ with $\lambda_n\to 0$ or $+\infty$, $(y_n,x_n,t_n)\in M\times M\times \Rpt$ with $y_n\neq x_n$ and $(y_n,x_n,t_n)$ converging in $\mathbb{G}\backslash \mathbb{G}^0$. We will show that $\alpha_{\lambda_n}(y_n,x_n,t_n)$ has to diverge to $\infty$ in $\mathbb{G}\backslash \mathbb{G}^0$. Let $(V,\natural)$ be a graded basis. 
\begin{itemize}
\item If $(y_n,x_n,t_n)\to (y,x,t)$ with $t>0$ and $x\neq y$ and $\lambda_n\to 0$ or $+\infty$, then $\alpha_{\lambda_n}(y_n,x_n,t_n)$ is easily seen to diverge to $\infty$ in $\mathbb{G}$.
\item If $(y_n,x_n,t_n)\to (gL,x,0)$ with $gL\neq L$. Then suppose that $$\alpha_{\lambda_n}(y_n,x_n,t_n)=(y_n,x_n,t_n\lambda_n^{-1})$$ converges in $\mathbb{G}\backslash \mathbb{G}^0$. Since $y_n,x_n\to x$, the limit of $(y_n,x_n,t_n\lambda_n^{-1})$ has to be of the form $(hK,x,0)$ for some $hK\in \cG^0_x$ with $hK\neq K$. Suppose first $\lambda_n\to +\infty$. Then by Proposition \ref{prop:Conv}, we can write $y_n=\exp(\natural(\alpha_{t_n\lambda_n^{-1}}(v_n)))\cdot x_n$ with $v_n\to v\in hK$. Then take $w_n=\alpha_{\lambda_n^{-1}}(v_n)$. Since $y_n=\exp(\natural(\alpha_{t_n}(w_n)))\cdot x$ and $w_n\to 0$, it follows from Proposition \ref{prop:Conv} that $(y_n,x_n,t_n)\to (L,x,0)$ which is a contradiction. If $\lambda_n\to 0$. Then by Proposition \ref{prop:Conv}, we can write $y_n=\exp(\natural(\alpha_{t_n}(v_n))\cdot x_n$ with $v_n\to v\in gL$.  Then take $w_n=\alpha_{\lambda_n}(v_n)$. We can repeat the previous argument to deduce that $(y_n,x_n,t_n\lambda_n^{-1})$ converges to $(K,x,0)$, again a contradiction.
\end{itemize}
This shows that the action of $\R_+^\times$ on $\mathbb{G}\backslash \mathbb{G}^0$ is proper. Hence, $\mathbb{P}(\mathbb{G})$ is second countable locally compact Hausdorff. We now show that $\pi$ is proper. We fix a quasi-norm on $V$. We have two cases to consider \begin{itemize}
\item If $(y_n,x_n)\in M\times M\backslash \Delta_M$, with $y_n,x_n\to x$. We need to show that there exists a subsequence of $(y_n,x_n)$ which converges in $\mathbb{P}(\mathbb{G})$. Since $y_n,x_n$ converge to the same point, there exists $v_n\in V$ such that $y_n=\exp(\natural(v_n))\cdot x_n$, and $\norml{v_n}=\norml{(y_n,x_n,1)}$. Let $t_n=\norml{v_n}$. Since $y_n\neq x_n$, it follows that $t_n\in \Rpt$. Since $y_n,x_n\to x$, it follows that $t_n\to 0$. By taking a subsequence, we can suppose that $\alpha_{t_n^{-1}}(v_n)$ converges to $v\in V$, and $(x_n,t_n)$ converges to $(L,x,0)$ for some $L\in \mathbb{G}^0$. Then by Proposition \ref{rem:cQ_V_quotient}, $(y_n,x_n,t_n)$ converges to $(\natural_{x,0}(v)L,x,0)$. By Theorem \ref{thm:cont_norm}, one has $\norml{(y_n,x_n,t_n)}\to \norml{(\natural_{x,0}(v)L,x,0)}$. By homogeneity, we have $$\norml{(y_n,x_n,t_n)}=t_n^{-1}\norml{(y_n,x_n,1)}=t_n^{-1}\norml{v_n}=1.$$ Hence, $\norml{(\natural_{x,0}(v)L,x,0)}=1$. So $\natural_{x,0}(v)L\neq L$. Hence, the class of $(y_n,x_n)$ in $\mathbb{P}(\mathbb{G})$ converges to the class of $(\natural_{x,0}(v)L,x,0)$ in $\mathbb{P}(\mathbb{G})$.
\item If $(g_nL_n,x_n,0)$ is a sequence in $\mathbb{G}\backslash \mathbb{G}^0$ with $x_n\to x$. We need to show that there exists a subsequence of $(g_nL_n,x_n,0)$ which converges in $\mathbb{P}(\mathbb{G})$. By replacing $(g_nL_n,x_n,0)$ with $\alpha_{\lambda}(g_nL_n,x,0)=(\alpha_\lambda(g_n)\alpha_\lambda(L_n),x,0)$, we can suppose that $\norml{(g_nL_n,x_n,0)}=1=\norml{v_n}$ for some $v_n\in \natural_{x_n,0}^{-1}(g_nL_n)$. By taking a subsequence, we can suppose that $v_n\to v$ for some $v\in V$ and $(L_n,x_n,0)\to (L,x,0)$ in $\mathbb{G}^0$. Then $(g_nL_n,x_n,0)\to (\natural_{x,0}(v)L,x,0)$ by Proposition \ref{prop:Conv}. Furthermore, $\norml{(\natural_{x,0}(v)L,x,0)}=1$ by Theorem \ref{thm:cont_norm}. Hence, $\natural_{x,0}(v)L\neq L$ and so $(\natural_{x,0}(v)L,x,0)$ defines an element of $\mathbb{P}(\mathbb{G})$.\qedhere
\end{itemize}
\end{proof}

\section{Carnot-Carathéodory metric and Gromov-Hausdorff convergence}\label{sec:TangentCones}
In this section, we suppose that we have a sub-Riemannian structure on $M$, i.e., \begin{equation}\label{eqn:sub-riem}
 \cF^{i+1}=\cF^{i}+[\cF^{i},\cF^{1}]
\end{equation} for all $i\geq 1$. 

We recall the following notion. If $V,W$ are finite dimensional vector spaces, $\norm{\cdot}$ is a norm on $V$, $f:V\to W$ is a surjective linear map, then the following formula $$\norm{w}:=\inf_{v\in f^{-1}(w)}\norm{v}$$
defines a norm on $W$ which we call the quotient norm by $f$.

 Let $(V,\natural)$ be a graded basis. We equip $V^1$ with a Euclidean norm $\norm{\cdot}$. For $x\in M,t\neq 0$, we denote by $\norm{\cdot}_{x,t}$ the quotient norm on $\ev_x(\cF^1)$ by $$\natural_{x,t|V^1	}:V^1\to \ev_x(\cF^1)=\{X(x):X\in \cF^1\}.$$ It is clear that if $v\in \ev_x(\cF^1)$, then $\norm{v}_{x,t}=\frac{1}{t}\norm{v}_{x,1}$. 
 
 If $c:I\to M$ is an absolutely continuous path such that $c'(s)\in \ev_{c(s)}(\cF^1)$ for almost all $s\in I$, then we define $$ \mathrm{length}_t(c):=\int_{0}^1\norm{c'(s)}_{c(s),t}ds.$$We also define a distance by\begin{equation*}
d_t(x,y)=\inf \mathrm{length}_t(c)
\end{equation*}
where the infimum is taken over all absolutely continuous paths from $x$ to $y$. It is clear that  \begin{equation}\label{eqn:dist}
d_t(x,y)=\frac{d_1(x,y)}{t}.
\end{equation}

Let $(L,x,0)\in \mathbb{G}$. Notice that \eqref{eqn:sub-riem} implies that $$\cF^{i+1}/L=\cF^{i}/L+[\cF^{i}/L,\cF^{1}/L].$$Hence, $\cF^i/L$ also defines a sub-Riemannian structure on $\Gr(\cF)_x/L$. The derivative of the left $\Gr(\cF)_x$-action on $\Gr(\cF)_x/L$, gives a linear map $\gr(\cF)_x\to \cX(\Gr(\cF)_x/L)$. By composing this map with $\natural_{x,0}:V\to \gr(\cF)_x$, we get a map $V\to \cX(\Gr(\cF)_x/L)$. The image of $V^1$ is a family of generators for $\cF^1/L$. Hence, for any $gL\in \Gr(\cF)_x/L$, we have a surjective linear map \begin{equation}\label{eqn:qshduifiq}
 V^1\to  \cF^1/L\to \ev_{gL}( \cF^1/L)=\{X(gL)\in T_{gL}\Gr(\cF)_x/L:X\in\cF^1/L \}.
\end{equation}
The norm $\norm{\cdot}$ on $V^1$, by \eqref{eqn:qshduifiq}, defines a quotient norm on $\ev_{gL}( \cF^1/L)$, denoted by $\norm{\cdot}_{\cG(\cF)_x/L}$. If $c:I\to \Gr(\cF)_x/L$ is an absolutely continuous path with $c'(s)\in \ev_{c(s)}( \cF^1/L)$ for almost all $s\in I$, then we define $$ \mathrm{length}_t(c):=\int_{0}^1\norm{c'(s)}_{\cG(\cF)_x/L}ds.$$We also define a distance by \begin{equation*}
d_{\Gr(\cF)_x/L}(gL,hL)=\inf \mathrm{length}_t(c),
\end{equation*}
where the infimum is taken over all absolutely continuous paths from $gL$ to $hL$. We remark that the way we defined the distance $d_{\Gr(\cF)_x/L}$ implies immediately that if $hL\in \Gr(\cF)_x/L$, then right multiplication by $Lh^{-1}$ is an isometry 
$\Gr(\cF)_x/L\to \Gr(\cF)_x/hLh^{-1}$. Hence, \begin{equation}\label{eqn:equiv_dist}
d_{\Gr(\cF)_x/L}(gL,hL)=d_{\Gr(\cF)_x/hLh^{-1}}(gLh^{-1},hLh^{-1}),
\end{equation}
where $L\in \cG^0_x,gL,hL\in \Gr(\cF)_x/L$.

\begin{theorem}\label{thm:cont_dist}
The function \begin{align*}
d_{\mathbb{G}}&:\mathbb{G}\to \R_+\\
d_{\mathbb{G}}(y,x,t)&=d_t(y,x)\quad (y,x,t)\in M\times M\times \R_+^\times\\
d_{\mathbb{G}}(gL,x,0)&=d_{\Gr(\cF)_x/L}(gL,L),\quad (gL,x,0)\in \cG\times\{0\}
\end{align*} is continuous, and homogeneous of degree $1$ with respect to the Debord-Skandalis action, i.e., $$d_\mathbb{G}(\alpha_\lambda(\gamma))=\lambda d_\mathbb{G}(\gamma),\quad  \forall\lambda\in \R_+^\times,\gamma\in \mathbb{G}.$$
\end{theorem}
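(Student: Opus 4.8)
The plan is to dispatch homogeneity by inspection and to prove continuity by recognizing $d_\mathbb{G}$, fibrewise over $\mathbb{G}^0$, as a Carnot--Carathéodory distance for a continuously varying distribution. For homogeneity, on $M\times M\times\R_+^\times$ the identity $d_\mathbb{G}(\alpha_\lambda(y,x,t))=d_{\lambda^{-1}t}(y,x)=\lambda\, d_t(y,x)$ is \eqref{eqn:dist}. On $\cG\times\{0\}$, the Lie group automorphism $\alpha_\lambda$ of $\gr(\cF)_x$ multiplies the degree-one component by $\lambda$, hence conjugates the infinitesimal left action so as to send each horizontal generator $\rho([X]_{1,x})$, $X\in\cF^1$, of $\Gr(\cF)_x/L$ to $\lambda$ times the corresponding generator of $\Gr(\cF)_x/\alpha_\lambda(L)$; pushing a horizontal curve forward by $\alpha_\lambda$ therefore scales its length by $\lambda$, and with \eqref{eqn:equiv_dist} this gives $d_\mathbb{G}(\alpha_\lambda(gL,x,0))=\lambda\, d_\mathbb{G}(gL,x,0)$.

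For continuity I would use that $\mathbb{G}$ is metrizable (Theorem \ref{thm:top_on_bG}) and check continuity along sequences. On $M\times M\times\R_+^\times$ the function is $t^{-1}d_1(y,x)$, and $d_1$ is a finite continuous metric on each component by Chow's theorem, since $\cF^1$ satisfies Hörmander's condition. Continuity at points of $\mathbb{G}^0$ (where $d_\mathbb{G}\equiv 0$): for $t>0$ this is classical, and for a limit $(y_n,x_n,t_n)\to(L,x,0)$ one writes, via a chart $\cQ_{\mathfrak{g}}$ attached to a graded Lie basis $\mathfrak{g}\supseteq V$, $y_n=\exp(\natural_{t_n}(w_n))\cdot x_n$ with $w_n\to 0$ in $\mathfrak{g}$, and the uniform ball--box estimate gives $d_{t_n}(y_n,x_n)=t_n^{-1}d_1(y_n,x_n)=O(\norm{w_n}^{1/N})\to 0$. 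The heart of the theorem is continuity at a point $\gamma=(gL,x,0)\in\cG\times\{0\}$, and the organizing fact is that $d_\mathbb{G}$ restricted to the source-fibre $\mathbb{G}_a$ is the Carnot--Carathéodory distance from $\mathrm{id}_a$ of the distribution spanned by the right-invariant fields $\tilde\theta_1(X)$, $X\in\cF^1$, of Proposition \ref{prop:cont_sec_alg}: on $\mathbb{G}_{(x,t)}\cong M$ these are $tX$, reproducing $d_t$; on $\mathbb{G}_{(L,x,0)}=\Gr(\cF)_x/L$ they are the left-invariant generators $\rho([X]_{1,x})$, reproducing $d_{\Gr(\cF)_x/L}$ (here \eqref{eqn:sub-riem} descends to $\Gr(\cF)_x/L$, so Chow's condition holds there). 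Concatenating flows, $d_\mathbb{G}(\gamma)=\inf\sum_{j=1}^N\norm{v_j}$ over $N$ and $v_1,\dots,v_N\in V^1$ with $\gamma=\cQ_{\mathfrak{g}}(v_N,\cdot)\cdots\cQ_{\mathfrak{g}}(v_1,\cdot)$ in the groupoid over $s(\gamma)$.

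Given $\gamma_n\to\gamma=(gL,x,0)$, I would prove $\limsup d_\mathbb{G}(\gamma_n)\le d_\mathbb{G}(\gamma)$ and $\liminf d_\mathbb{G}(\gamma_n)\ge d_\mathbb{G}(\gamma)$ separately. For the $\limsup$, fix $\epsilon>0$, take jumps $v_1,\dots,v_N\in V^1$ with $\sum\norm{v_j}<d_\mathbb{G}(\gamma)+\epsilon$ and $\natural_{x,0}(v_N)\cdots\natural_{x,0}(v_1)L=gL$, and set $z_n:=\exp(\natural_{t_n}(v_N))\cdots\exp(\natural_{t_n}(v_1))\cdot x_n$ (with the obvious variant when $t_n=0$). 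Then $d_{t_n}(z_n,x_n)\le\sum\norm{v_j}$, since each flow piece of $\natural_{t_n}(v_j)$ has $d_{t_n}$-length $\le\norm{v_j}$, while $(z_n,x_n,t_n)\to\gamma$ by continuity of $\cQ_{\mathfrak{g}}$ and of the multiplication (Theorem \ref{thm:cont_structure_maps}); hence $(y_n,z_n,t_n)=m(\gamma_n,\iota(z_n,x_n,t_n))\to m(\gamma,\iota\gamma)=\mathrm{id}_{r(\gamma)}\in\mathbb{G}^0$, so by the $\mathbb{G}^0$-continuity just established $d_{t_n}(y_n,z_n)\to 0$, and the triangle inequality gives $\limsup d_\mathbb{G}(\gamma_n)\le d_\mathbb{G}(\gamma)+\epsilon$. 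For the $\liminf$, take controls $u^{(n)}\colon[0,1]\to V^1$ realizing $d_\mathbb{G}(\gamma_n)$ in $\mathbb{G}_{s(\gamma_n)}$ with $\int_0^1\norm{u^{(n)}}\le d_\mathbb{G}(\gamma_n)+1/n$ bounded, and extract a subsequential weak-$*$ limit; using the continuity of $\cQ_{\mathfrak{g}}$ (equivalently, that the flows of $\natural_t(\cdot)$ degenerate as $t\to 0$ to left translation on $\Gr(\cF)_x/L$) one identifies the limiting trajectory as a horizontal curve in $\Gr(\cF)_x/L$ from $L$ to $gL$ of length at most $\liminf\int_0^1\norm{u^{(n)}}$, whence $d_\mathbb{G}(\gamma)\le\liminf d_\mathbb{G}(\gamma_n)$. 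A sequence $(g_nL_n,x_n,0)\to(gL,x,0)$ entirely inside $\cG\times\{0\}$ is handled the same way and is easier, as there is no rescaling of fibres; \eqref{eqn:equiv_dist} and Theorem \ref{thm:prop of blowup} may be used to normalize $g_nL_n$ if convenient.

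The main obstacle is the $\liminf$ inequality when the base point degenerates from $(x_n,t_n)$, $t_n>0$ (fibre $M$, generating fields $t_nX\to 0$), to $(L,x,0)$ (fibre $\Gr(\cF)_x/L$): the minimizing curves collapse to $x$ in $M$ and only converge after the rescaling implicit in $\cQ_{\mathfrak{g}}$, so one must transfer the uniform $L^1$-bound on the controls through that rescaling and check, via the $C^{\infty,0}$-structure (Proposition \ref{prop:cont_sec_alg}), that the limiting velocity is the left-invariant horizontal distribution on $\Gr(\cF)_x/L$ and that length is lower semicontinuous along the way. A subsidiary issue is the uniform ball--box estimate near $\mathbb{G}^0$ and, in the sub-Riemannian facts used (finiteness and continuity of $d_1$, realization of the distance by finite flow concatenations), invoking Chow's theorem with locally uniform control.
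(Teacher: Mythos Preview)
Your argument is correct in outline and your identification of the crux—the $\liminf$ inequality across the degeneration $t_n\to 0$—is accurate; the $\limsup$ half via concatenated flows and the groupoid structure maps is clean and complete as written. The route, however, is genuinely different from the paper's. The paper dispatches continuity in three lines: after homogeneity reduces the question to a neighbourhood of $\mathbb{G}^0$, it invokes the continuous family groupoid charts (Theorem~\ref{thm:cont_fam}), observes via Proposition~\ref{prop:cont_sec_alg} that the generating fields $\tilde\theta_1(X)$ are $C^{\infty,0}$ in those charts, and then cites a textbook result (\cite[Theorem~3.56]{ComprehensiveGuideSubRiemBook}) asserting that the Carnot--Carath\'eodory distance of a continuously varying family of bracket-generating systems is jointly continuous. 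So the paper black-boxes precisely the control-theoretic argument (weak-$*$ compactness of controls, Gronwall-type trajectory convergence, lower semicontinuity of length) that you reprove by hand.

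What each buys: the paper's route is short and modular, but leans on a citation and on the reduction by homogeneity to a neighbourhood of $\mathbb{G}^0$ (so that near-minimizers stay inside a single chart). Your route is self-contained and makes the mechanism visible, but you should be explicit that the compactness argument for the $\liminf$ takes place in a chart $(\Omega,\psi)$ of the continuous family groupoid rather than through $\cQ_{\mathfrak g}$ directly (which is not injective), and that boundedness of $d_{\mathbb G}(\gamma_n)$ forces the curves to remain in a fixed compact set of $\Omega$; otherwise the passage from weak-$*$ convergence of controls to uniform convergence of trajectories is not justified. Your ball--box step at $\mathbb{G}^0$ is unnecessary once you note, as the paper does, that homogeneity together with continuity near $\mathbb{G}^0$ already gives continuity everywhere on $\cG\times\{0\}$.
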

\begin{proof}
Homogeneity follows from \eqref{eqn:dist} and the trivial to check fact that $\alpha_\lambda$ is an isometry from $(\Gr(\cF)_x/L,d_{\Gr(\cF)_x/L})$ to $(\Gr(\cF)_x/\alpha_\lambda(L),\lambda d_{\Gr(\cF)_x/\alpha_\lambda(L)})$. Continuity of $d_{\mathbb{G}}$ on $M\times M\times \R_+^\times$ follows from \cite[Corollary 2.6]{BellaicheArt}. By homogeneity, it is enough to prove continuity in a neighbourhood of $(L,x,0)$. The result follows from Theorem \ref{thm:cont_fam} and Proposition \ref{prop:cont_sec_alg} and \cite[Theorem 3.56]{ComprehensiveGuideSubRiemBook}.
\end{proof}
Let $\norml{\cdot}$ be a quasi-norm on $V$. Then $\norml{\cdot}:\mathbb{G}^0\to [0,+\infty]$ and $d_{\mathbb{G}}$ are $1$-homogeneous which vanish only on $\mathbb{G}^0$. Hence, the quotient $\frac{\norml{\cdot}}{d_{\mathbb{G}}}$ descends to a map $$\frac{\norml{\cdot}}{d_{\mathbb{G}}}:\mathbb{P}(\mathbb{G})\to ]0,+\infty].$$Let $\mathbb{U}$ be an open set like in Proposition \ref{rem:cQ_V_quotient}. Let $\mathbb{P}:\mathbb{G}\backslash \mathbb{G}^0\to \mathbb{P}(\mathbb{G}^0)$ be the quotient map. It is open because it is the quotient by a group action. Hence, $\mathbb{P}(\cQ_V(\mathbb{U})\backslash \mathbb{G}^0)$ is open. By Theorem \ref{thm:cont_norm}, we obtained an open neighbourhood of $\mathbb{P}(\cG)$ on which $\frac{\norml{\cdot}}{d_{\mathbb{G}}}$ is continuous. Since $\mathbb{P}(\cG)=\pi^{-1}(\Delta_M)$, where $\pi:\mathbb{P}(\mathbb{G})\to M\times M$ the map from Theorem \ref{thm:blwup}, we thus obtain the following \begin{theorem}\label{thm:comparison}
Let $x\in M$. Then there exists a compact neighbourhood $K\subseteq M\times M$ of $(x,x)$ and $C\in \Rpt$ such that on $\pi^{-1}(K)$, one has $$C^{-1}\leq \frac{\norml{\cdot}}{d_{\mathbb{G}}}\leq C$$
\end{theorem}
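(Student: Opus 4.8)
The plan is to derive the two-sided bound from a compactness argument on the blowup space $\mathbb{P}(\mathbb{G})$, using the near-diagonal continuity of the ratio $\frac{\norml{\cdot}}{d_{\mathbb{G}}}$ established just above the statement. Recall that, with $\mathbb{U}$ as in Proposition~\ref{rem:cQ_V_quotient}, the set $\Omega:=\mathbb{P}\big(\cQ_V(\mathbb{U})\setminus\mathbb{G}^0\big)$ is an open neighbourhood of $\pi^{-1}(\Delta_M)=\mathbb{P}(\cG)$ in $\mathbb{P}(\mathbb{G})$ on which $\frac{\norml{\cdot}}{d_{\mathbb{G}}}$ is continuous. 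Since $\norml{\cdot}$ is finite (indeed continuous, by Theorem~\ref{thm:cont_norm}) on $\cQ_V(\mathbb{U})$, while $\norml{\cdot}$ and $d_{\mathbb{G}}$ are both strictly positive off $\mathbb{G}^0$, this continuous function takes values in the open interval $]0,+\infty[$ on $\Omega$. So the only thing left to do is to trap a full fibre $\pi^{-1}(K)$, for $K$ a compact neighbourhood of $(x,x)$, inside $\Omega$, and then invoke compactness.

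First I would use that, by Theorem~\ref{thm:blwup}, $\pi\colon\mathbb{P}(\mathbb{G})\to M\times M$ is a proper continuous map between locally compact Hausdorff spaces, hence closed. Therefore $\pi\big(\mathbb{P}(\mathbb{G})\setminus\Omega\big)$ is closed in $M\times M$, and it misses $(x,x)$ because $\pi^{-1}\big((x,x)\big)\subseteq\pi^{-1}(\Delta_M)=\mathbb{P}(\cG)\subseteq\Omega$. Hence there is an open $W\ni(x,x)$ with $\pi^{-1}(W)\subseteq\Omega$; shrinking, choose a compact neighbourhood $K\subseteq W$ of $(x,x)$. By properness, $\pi^{-1}(K)$ is compact, and by construction $\pi^{-1}(K)\subseteq\Omega$. (One could equally argue by contradiction: a sequence in $\mathbb{P}(\mathbb{G})\setminus\Omega$ lying over points tending to $(x,x)$ would, by properness, have a convergent subsequence whose limit lies over $(x,x)$, hence in $\mathbb{P}(\cG)\subseteq\Omega$, contradicting that $\mathbb{P}(\mathbb{G})\setminus\Omega$ is closed.)

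Finally, restricted to the compact set $\pi^{-1}(K)$, the function $\frac{\norml{\cdot}}{d_{\mathbb{G}}}$ is continuous with values in $]0,+\infty[$, hence attains there a finite maximum $C_1$ and a strictly positive minimum $C_2$; taking $C:=\max\{C_1,C_2^{-1}\}\in\Rpt$ yields $C^{-1}\le \frac{\norml{\cdot}}{d_{\mathbb{G}}}\le C$ on $\pi^{-1}(K)$, as required. I do not anticipate a real obstacle: the analytic input — continuity of $d_{\mathbb{G}}$ (Theorem~\ref{thm:cont_dist}), continuity and finiteness of $\norml{\cdot}$ near $\mathbb{G}^0$ (Theorem~\ref{thm:cont_norm}), and properness of $\pi$ (Theorem~\ref{thm:blwup}) — is already in place. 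The one point needing care is that the relevant neighbourhood of $\mathbb{P}(\cG)$ must be $\mathbb{P}\big(\cQ_V(\mathbb{U})\setminus\mathbb{G}^0\big)$, on which $\norml{\cdot}$, and therefore the ratio, is finite; on a larger neighbourhood of $\mathbb{P}(\cG)$ the ratio could be $+\infty$, so that the supremum over the compact fibre, and with it the upper bound, would fail.
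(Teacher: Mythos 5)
Your proposal is correct and follows essentially the same route as the paper: the paper derives the theorem from the descent of $\frac{\norml{\cdot}}{d_{\mathbb{G}}}$ to $\mathbb{P}(\mathbb{G})$ by homogeneity, its continuity on the open neighbourhood $\mathbb{P}(\cQ_V(\mathbb{U})\backslash\mathbb{G}^0)$ of $\pi^{-1}(\Delta_M)$ (via Theorems \ref{thm:cont_norm} and \ref{thm:cont_dist}), and the properness of $\pi$ from Theorem \ref{thm:blwup}. You have merely filled in the routine properness/compactness details that the paper leaves implicit, including the correct caveat that the finiteness of the ratio is only guaranteed on $\mathbb{P}(\cQ_V(\mathbb{U})\backslash\mathbb{G}^0)$.
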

So if $x_0$ is fixed, and $y$ and $x$ are close enough to $x_0$, then one has the inequality $$C^{-1} d_{1}(y,x)\leq \inf\{\norml{v}\in V:y=\exp(\natural(v))\cdot x\}\leq Cd_1(y,x).$$ 

\begin{theorem}
Let $(x_n,t_n)\in M\times \R_+^\times$ be a sequence which converges in $\mathbb{G}^0$ to $(L,x,0)$. Then the pointed metric spaces $(M,d_{t_n},x_n)=(M,\frac{d_1}{t_n},x_n)$ converge in the pointed Gromov-Hausdorff sense to $(\Gr(\cF)_x/L,d_{\Gr(\cF)_x/L},L)$.
\end{theorem}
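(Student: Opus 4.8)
The plan is to verify pointed Gromov--Hausdorff convergence via $\varepsilon$-isometries between metric balls, using the continuous family groupoid structure on $\mathbb{G}$, the continuity of $d_{\mathbb{G}}$ (Theorem \ref{thm:cont_dist}), and the comparison estimate of Theorem \ref{thm:comparison}. The starting observation is that both pointed metric spaces in the statement are $s$-fibres of $\mathbb{G}$ based at their unit: $\mathbb{G}_{(x_n,t_n)}\cong(M,d_{t_n})$ via $y\mapsto(y,x_n,t_n)$ with unit $x_n$, and $\mathbb{G}_{(L,x,0)}\cong(\Gr(\cF)_x/L,d_{\Gr(\cF)_x/L})$ via $gL\mapsto(gL,x,0)$ with unit $L$. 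Moreover, in each fibre the distance between two arrows $\gamma,\gamma'$ with $s(\gamma)=s(\gamma')$ equals $d_{\mathbb{G}}(m(\gamma,\iota(\gamma')))$: for $t>0$ this is $d_{\mathbb{G}}(y,y',t_n)=d_{t_n}(y,y')$, and for $t=0$ the analogous identity holds after rewriting with \eqref{eqn:equiv_dist}. Since $m,\iota$ and $d_{\mathbb{G}}$ are continuous and $\mathbb{G}$ is metrizable, distances between points of $\mathbb{G}_{(x_n,t_n)}$ that converge in $\mathbb{G}$ pass to the limit.

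Fix $R>0$ and $\varepsilon>0$. The ball $\bar B:=\{gL\in\Gr(\cF)_x/L: d_{\Gr(\cF)_x/L}(gL,L)\le R+1\}$ is compact (the horizontal vector fields on the homogeneous space $\Gr(\cF)_x/L$ are complete, so $d_{\Gr(\cF)_x/L}$ is complete and closed balls are compact). Choose an $\varepsilon/3$-net $g_1L,\dots,g_kL$ of $\bar B$, fix a graded Lie basis $(\mathfrak{g},\natural)$, pick $\tilde g_j\in\mathfrak{g}$ with $\natural_{x,0}(\tilde g_j)=g_j$, and set $y^{(j)}_n:=\exp(\natural_{t_n}(\tilde g_j))\cdot x_n$, which is defined for $n$ large because $\natural_{t_n}(\tilde g_j)\to0$. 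Continuity of $\cQ_{\mathfrak{g}}$ gives $(y^{(j)}_n,x_n,t_n)=\cQ_{\mathfrak{g}}(\tilde g_j,x_n,t_n)\to\cQ_{\mathfrak{g}}(\tilde g_j,L,x,0)=(g_jL,x,0)$ in $\mathbb{G}$; hence, by the previous paragraph, $d_{t_n}(y^{(j)}_n,x_n)\to d_{\Gr(\cF)_x/L}(g_jL,L)$ and $d_{t_n}(y^{(j)}_n,y^{(i)}_n)\to d_{\Gr(\cF)_x/L}(g_jL,g_iL)$ for all $i,j$. Thus for $n$ large the finite set $\{y^{(j)}_n\}_j$ is, up to error $\varepsilon$, an isometric copy of the net.

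The only substantial point is to show that for $n$ large $\{y^{(j)}_n\}_j$ is $\varepsilon$-dense in $\bar B_n(R):=\{y\in M:d_{t_n}(y,x_n)\le R\}$. Suppose not; then along a subsequence there are $z_n\in\bar B_n(R)$ with $d_{t_n}(z_n,y^{(j)}_n)\ge\varepsilon$ for all $j$. Since $d_1(z_n,x_n)=t_n\,d_{t_n}(z_n,x_n)\le Rt_n\to0$ by \eqref{eqn:dist}, for $n$ large $(z_n,x_n)$ lies in the neighbourhood of the diagonal where Theorem \ref{thm:comparison} applies, so $\norml{(z_n,x_n,t_n)}\le C\,d_{t_n}(z_n,x_n)\le CR$; fix $v_n\in V$ with $z_n=\exp(\natural_{t_n}(v_n))\cdot x_n$ and $\norml{v_n}\le 2CR$. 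The quasi-norm sublevel set $\{v\in V:\norml{v}\le 2CR\}$ is compact, so after passing to a subsequence $v_n\to v_\infty$; then $(v_n,x_n,t_n)\to(v_\infty,L,x,0)$ in $V\times\mathbb{G}^0$, this limit lies in the open set $\mathbb{U}$ of Proposition \ref{rem:cQ_V_quotient} (which contains $V\times\cG^0\times\{0\}$), so for $n$ large $(v_n,x_n,t_n)\in\mathbb{U}$, and continuity of $\cQ_V$ gives $(z_n,x_n,t_n)\to\cQ_V(v_\infty,L,x,0)=(\natural_{x,0}(v_\infty)L,x,0)=:(gL,x,0)$ in $\mathbb{G}$. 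By continuity of $d_{\mathbb{G}}$, $d_{\Gr(\cF)_x/L}(gL,L)\le R$, so $gL\in\bar B$ and $d_{\Gr(\cF)_x/L}(gL,g_jL)\le\varepsilon/3$ for some $j$; but then $d_{t_n}(z_n,y^{(j)}_n)=d_{\mathbb{G}}(m((z_n,x_n,t_n),\iota(y^{(j)}_n,x_n,t_n)))\to d_{\Gr(\cF)_x/L}(gL,g_jL)\le\varepsilon/3<\varepsilon$, a contradiction.

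Finally, defining $f_n$ on $\bar B_n(R)$ by sending $y$ to a $g_jL$ minimizing $d_{t_n}(y,y^{(j)}_n)$ (so $d_{t_n}(y,y^{(j)}_n)\le\varepsilon$ by the last paragraph), and combining with the convergence of the finitely many pairwise distances, one obtains $\bigl|d_{\Gr(\cF)_x/L}(f_n(y),f_n(y'))-d_{t_n}(y,y')\bigr|\le O(\varepsilon)$, $d_{\Gr(\cF)_x/L}(f_n(x_n),L)\le O(\varepsilon)$, and an image that is $\varepsilon/3$-dense in $\bar B_{\Gr(\cF)_x/L}(L,R-\varepsilon)$ (each net point in that smaller ball is eventually a value of $f_n$, since $d_{t_n}(y^{(j)}_n,x_n)\to d_{\Gr(\cF)_x/L}(g_jL,L)$). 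Thus $f_n$ is an $O(\varepsilon)$-isometry between the $R$-balls for $n$ large; since $R,\varepsilon$ are arbitrary, this is precisely the standard criterion for $(M,d_{t_n},x_n)\to(\Gr(\cF)_x/L,d_{\Gr(\cF)_x/L},L)$ in the pointed Gromov--Hausdorff sense. The genuinely non-formal step is the precompactness in the third paragraph---that no part of the rescaled ball escapes in $\mathbb{G}$---and that is exactly what the comparison estimate of Theorem \ref{thm:comparison}, together with the structure of $\mathbb{U}$ and the compactness of quasi-norm sublevel sets, is designed to supply.
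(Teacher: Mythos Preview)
Your argument is correct and uses the same three essential ingredients as the paper: the continuity of $d_{\mathbb{G}}$ (Theorem \ref{thm:cont_dist}) together with the groupoid operations to transport distances between fibres, and the comparison estimate (Theorem \ref{thm:comparison}) to prevent points of the rescaled ball from escaping in $\mathbb{G}$. The execution, however, is genuinely different. The paper uses a chart $(\Omega,\psi)$ of the continuous family groupoid structure to build \emph{continuous} maps $\phi_n:B_{\Gr(\cF)_x/L}(L,R)\to M$ (one for each $n$), proves an ad hoc Gromov--Hausdorff lemma (Lemma \ref{lem:jqlhsifdjpoqsjdf}) adapted to this situation, checks its hypotheses, and then---because the chart only covers a small ball around the unit---invokes the Debord--Skandalis $\R_+^\times$-action at the end to pass from some small $R$ to all $R$. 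You instead bypass charts entirely: you fix an arbitrary $R$, choose a finite $\varepsilon$-net in the compact limit ball, lift it to $\mathfrak{g}$ and flow, and verify the standard $\varepsilon$-isometry criterion using only continuity of $\cQ_V$, $m$, $\iota$, $d_{\mathbb{G}}$. This is more elementary (no continuous family groupoid charts, no auxiliary lemma, no homogeneity trick), at the cost of not producing continuous approximating maps $\phi_n$. Two small remarks: your invocation of $\mathbb{U}$ from Proposition \ref{rem:cQ_V_quotient} in the precompactness step is harmless but unnecessary, since you only use continuity of $\cQ_V$, not openness; and the infimum defining $\norml{(z_n,x_n,t_n)}$ is attained, so you can take $\norml{v_n}\le CR$ rather than $2CR$.
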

\begin{proof}
In the proof, if $(X,d)$ is a metric space, $x\in X$, $r\in \R_+$, then $B_X(x,r)$ or $B_d(x,r)$ denote the closed ball of radius $r$ and center $x$. 
\begin{lem}\label{lem:jqlhsifdjpoqsjdf} Let $(X_n,d_n)$, $(X,d)$ be metric spaces, $x_n\in X_n$. Suppose that $(X_n,d_n)$ are length metric spaces, and for each $n\in \N$, we have a map $\phi_n:X\to X_n$ such that \begin{enumerate}
\item $\phi_n$ is injective and $$\sup_{x,y\in X}|d(x,y)-d_n(\phi_n(x),\phi_n(y))|\xrightarrow{n\to +\infty} 0$$
\item There exists a sequence $R_n>0$ such that $\phi_n(X)\subseteq B_{X_n}(x_n,R_n)$ and $R_n\to R$ for some $R>0$.
\item For each $R'<R$, there exists $m$ such that for $n>m$, $ B_{X_n}(x_n,R')\subseteq \phi_n(X)$.
\end{enumerate}
Then $(B_{X_n}(x_n,R),d_n)$ converges in the Gromov-Hausdorff distance to $(X,d)$.
\end{lem}
Recall that a length metric space \cite[Definition 1.7 on Page 6]{GromovMetricStrucBook} is a metric space $(X,d)$ such that for any $x,y\in X$ and $\epsilon>0$, there exists a continuous path $l:[0,1]\to X$ such that $l(0)=x$ and $l(1)=y$ and the length of $l$ is less than or equal to $d(x,y)+\epsilon$. 
\begin{proof}
By replacing $R_n$ with $\max(R_n,R)$ we can suppose that $R_n\geq R$ for all $n$. Let $d_{GH}$ and $d_H$ denote the Gromov-Hausdorff and Hausdorff distance respectively. Condition $1$ implies that $$\lim_{n\to +\infty}d_{GH}((X,d),(\im(\phi_n),d_n))=0.$$Let $R'<R$. Take $n$ big enough so that Condition $3$ is satisfied. One has $$d_{H}(\im(\phi_n),B_{X_n}(x_n,R_n))\leq d_{H}(B_{X_n}(x_n,R'),B_{X_n}(x_n,R_n))\leq R_n-R'$$ where the last inequality follows from the fact that $X_n$ is a length space. 
Hence, $$\lim_{n\to +\infty}d_{GH}((\im(\phi_n),d_n),(B_{X_n}(x_n,R_n),d_n))=0.$$ Since $X_n$ is a length space, 
    \begin{equation*}\begin{aligned}
        \lim_{n\to +\infty}d_{GH}((B_{X_n}(x_n,R_n),d_n),(B_{X_n}(x_n,R),d_n))=0.
    \end{aligned}\end{equation*}
 The result follows from triangle inequality.
\end{proof}

 Let $(\Omega,\psi)$ be a chart of $\mathbb{G}$ such that $(L,x,0)\in \Omega$. Without loss of generality, we suppose that $\psi(\Omega)=U\times V$ where $U\subseteq \R^{\dim(M)}$ is an open subset and $V\subseteq \mathbb{G}^0$ is an open neighbourhood of $(L,x,0)\in \mathbb{G}^0$. We can further suppose that $(x_n,t_n)\in V$ for all $n\in \N$. We denote by $p:U\times V\to U$ the projection onto $U$. 

Now choose any $R>0$ small enough such that $B_{\Gr(\cF)_x/L}(L,R)\times \{(x,0)\}$ is a compact subset of $\Omega$. Let $\phi_n:B_{\Gr(\cF)_x/L}(L,R)\to M$ defined by $$(\phi_n(gL),x_n,t_n)=\psi^{-1}\Big(p(\psi(gL,x,0)),x_n,t_n\Big).$$By construction $(\phi_n(gL),x_n,t_n)\to (gL,x,0)$ for all $gL\in B_{\Gr(\cF)_x/L}(L,R)$. By Theorem \ref{thm:cont_structure_maps}, $$(\phi_n(gL),\phi_n(hL),t_n)\to (gLh^{-1},x,0),\quad \forall gL,hL\in B_{\Gr(\cF)_x/L}(L,R).$$By Theorem \ref{thm:cont_dist} and \eqref{eqn:equiv_dist}, it follows that \begin{equation}\label{eqn:limit_proof_ajhslia}
 d_{t_n}(\phi_n(gL),\phi_n(hL))\to d_{\Gr(\cF)_x/L}(gL,hL),\quad \forall gL,hL\in B_{\Gr(\cF)_x/L}(L,R).
\end{equation}

The limit \eqref{eqn:limit_proof_ajhslia} is uniform in $gL,hL$. 
To see this, suppose that it is not. Since $B_{\Gr(\cF)_x/L}(L,R)$ is compact, we can construct subsequences $(g_nL)_{n\in \N},(h_nL)_{n\in \N}\subseteq B_{\Gr(\cF)_x/L}(L,R)$ such that $g_nL\to gL$ and $h_nL\to hL$ yet $ d_{t_n}(\phi_n(g_nL),\phi_n(h_nL))$ 
does not converge to $d_{\Gr(\cF)_x/L}(gL,hL)$. It is clear from the definition of $\phi_n$ that $(\phi_n(g_nL),x_n,t_n)\to (gL,x,0)$. The same holds for $hL$. Hence, by Theorem \ref{thm:cont_structure_maps}, Theorem \ref{thm:cont_dist} and \eqref{eqn:equiv_dist} $d_{t_n}(\phi_n(g_nL),\phi_n(h_nL))$ converges to $d_{\Gr(\cF)_x/L}(gL,hL)$ which is a contradiction.

Let $R_n=\sup\{d_{t_n}(\phi_n(gL),x_n):gL\in B_{\Gr(\cF)_x/L}(L,R)\}$. We have $$ d_{t_n}(\phi_n(gL),x_n)\to d_{\Gr(\cF)_x/L}(gL,L),\quad \forall gL\in B_{\Gr(\cF)_x/L}(L,R).$$
The limit is also uniform by an argument similar to before. Hence, $R_n\to R$. We now check the last condition of Lemma \ref{lem:jqlhsifdjpoqsjdf}. Let $R'<R$. Suppose that the last condition is not satisfied. Hence, there exists $y_n\in B_{d_{t_n}}(x_n,R')$ such that $y_n\notin \mathrm{Im}(\phi_n)$. Clearly $y_n\to x$. By Theorem \ref{thm:comparison} and Proposition \ref{prop:Conv}, there exists a subsequence of $(y_n,x_n,t_n)$ which converges to $(gL,x,0)\in \mathbb{G}$ for some $gL$. By Theorem \ref{thm:cont_dist}, $gL\in B_{\Gr(\cF)_x/L}(L,R') $. Hence, $(gL,x,0)\in \Omega$. So $(y_n,x_n,t_n)\in \Omega$ for $n$ big enough. Then $\psi(y_n,x_n,t_n)=(a_n,x_n,t_n)$ and $a_n\to a$ with $\psi(gL,x,0)=(a,L,x,0)$. Since $R'<R$, it follows that $\psi^{-1}(a_n,L,x,0)=(h_nL,x,0)$ with $h_nL\in  B_{\Gr(\cF)_x/L}(L,R)$ for $n$ big enough. Hence, $y_n=\phi_n(h_nL)$, which is a contradiction.

By Lemma \ref{lem:jqlhsifdjpoqsjdf}, we obtain that $B_{d_{t_n}}(x_n,R)$ converges to $B_{\Gr(\cF)_x/L}(L,R)$. In fact all the above works with $R'<R$. Since $\cG^0_x$ is compact, we deduce that there exists $R>0$ such that any sequence 
$(y_n,s_n)\in M\times \Rpt$ converging to an element $(K,x,0)\in \mathbb{G}^0$ and for any $R'\leq R$, one has  $$ \lim_{n\to +\infty}B_{d_{s_n}}(y_n,R')=B_{\Gr(\cF)_x/K}(K,R').$$By using the $\Rpt$ action on $\mathbb{G}^0$, we deduce that for all $R\in \Rpt$, 
\begin{equation*}
 \lim_{n\to +\infty}B_{d_{s_n}}(y_n,R)=B_{\Gr(\cF)_x/K}(K,R). 
\end{equation*}
Hence, 
    \begin{equation*}\begin{aligned}
        \lim_{n\to +\infty}(M,d_{s_n},y_n)=(\Gr(\cF)_x/K,d_{\Gr(\cF)_x/K},K).
    \end{aligned}\end{equation*}
    This finishes the proof.
\end{proof}
\begin{refcontext}[sorting=nyt]
\printbibliography
\end{refcontext}
{\footnotesize
(Omar Mohsen) Paris-Saclay University, Paris, France
\vskip-2pt e-mail: \texttt{omar.mohsen@universite-paris-saclay.fr}}
\end{document}